\DeclareMathOperator{\Ca}{\mathcal{C}}
\DeclareMathOperator{\La}{\mathcal{L}}
\DeclareMathOperator{\Ra}{\mathcal{R}}
\DeclareMathOperator{\Cb}{\mathbb{C}}
\DeclareMathOperator{\Hb}{\mathbb{H}}
\DeclareMathOperator{\Kb}{\mathbb{K}}
\DeclareMathOperator{\Nb}{\mathbb{N}}
\DeclareMathOperator{\Rb}{\mathbb{R}}
\DeclareMathOperator{\Sb}{\mathbb{S}}
\DeclareMathOperator{\Zb}{\mathbb{Z}}
\newcommand{\modal}{{\Diamond}}
\newcommand{\modaltwo}{{ \blacklozenge }}
\newcommand{\comodal}{{ \Box }}
\DeclareMathOperator{\inv}{^{-1}}
\DeclareMathOperator{\id}{\mathsf{id}}
\DeclareMathOperator{\Aut}{\mathsf{Aut}}
\DeclareMathOperator{\BAut}{\mathsf{BAut}}
\DeclareMathOperator{\Prop}{\textbf{Prop}}
\DeclareMathOperator{\Type}{\textbf{Type}}
\DeclareMathOperator{\refl}{\mathsf{refl}}
\DeclareMathOperator{\fib}{\mathsf{fib}}
\DeclareMathOperator{\fst}{\term{fst}}
\DeclareMathOperator{\snd}{\term{snd}}
\DeclareMathOperator{\ap}{\term{ap}}
\DeclareMathOperator{\at}{\term{\, at\,}}
\newcommand{\shape}{\mathbin{\textup{\textesh}}}
\newcommand{\sslash}{\mathbin{/\mkern-6mu/}} %
\DeclareMathOperator{\Vect}{\textbf{Vect}}
\newcommand{\lam}[1]{\lambda {#1}.\,}       %
\newcommand{\dprod}[1]{({#1}) \to }           %
\newcommand{\dsum}[1]{({#1}) \times }       %
\newcommand{\type}[1]{\mathsf{{#1}}}
\newcommand{\term}[1]{\mathsf{{#1}}}
\newcommand{\trunc}[1]{\left\lVert#1\right\rVert}       %
\newcommand{\BB}{\term{B}}
\newcommand{\pt}{\term{pt}}
\newcommand{\xto}[1]{\xrightarrow{#1}}
\newcommand{\pto}{\,\cdot\kern-.1em{\to}\,}
\newcommand{\pcirc}{\overset{.}{\circ}}
\providecommand*{\xmapstofill@}{%
  \arrowfill@{\mapstochar\relbar}\relbar\rightarrow
}
\providecommand*{\xmapsto}[2][]{%
  \ext@arrow 0395\xmapstofill@{#1}{#2}%
}
\def\slashedarrowfill@#1#2#3#4#5{%
  $\m@th\thickmuskip0mu\medmuskip\thickmuskip\thinmuskip\thickmuskip
   \relax#5#1\mkern-7mu%
   \cleaders\hbox{$#5\mkern-2mu#2\mkern-2mu$}\hfill
   \mathclap{#3}\mathclap{#2}%
   \cleaders\hbox{$#5\mkern-2mu#2\mkern-2mu$}\hfill
   \mkern-7mu#4$%
}
\def\rightslashedarrowfill@{%
  \slashedarrowfill@\relbar\relbar\mapstochar\rightarrow}
\newcommand\xslashedrightarrow[2][]{%
  \ext@arrow 0055{\rightslashedarrowfill@}{#1}{#2}}
\tikzset{ vert/.style={anchor=south, rotate=90, inner sep=.5mm} }
\newtheorem{thm}{Theorem}[section]
\theoremstyle{definition}
\newtheorem{defn}[thm]{Definition}
\newtheorem{axiom}{Axiom}
\newtheorem{rmk}[thm]{Remark}
\newtheorem*{acknowledgements}{Acknowledgements}
\newtheorem{lem}[thm]{Lemma}
\newtheorem*{notation}{Notation}
\newtheorem{cor}[thm]{Corollary}
\newtheorem{prop}[thm]{Proposition}
\begin{document}

\title{Good Fibrations through the Modal Prism}
\author{David Jaz Myers}
\date{\today}
\maketitle

\begin{abstract}
    Homotopy type theory is a formal language for doing abstract homotopy theory
    --- the study of identifications. But in unmodified homotopy type theory, there is no way to say that these identifications come from identifying the
    path-connected points of a space. In other words, we can do abstract
    homotopy theory, but not algebraic topology. Shulman's \emph{Real Cohesive
      HoTT} remedies this issue by introducing a system of modalities that
    relate the spatial structure of types to their homotopical structure. In
    this paper, we develop a theory of \emph{modal fibrations} for a general
    modality, and apply it in particular to the shape modality of real cohesion.
    We then give examples of modal fibrations in Real Cohesive HoTT, and develop
    the theory of covering spaces.
\end{abstract}

\tableofcontents

\section{Introduction}

While homotopy theory --- the study of identifications --- has been well developed in homotopy type theory, algebraic topology --- the study of the connectivity of space --- has been somewhat lacking. This is because Book HoTT (the homotopy type theory of the HoTT Book \cite{HoTTBook}) has no way of saying that a type is \emph{the homotopy type} of another type. While we can define both the homotopy circle $S^1$ as a higher inductive type and the topological circle
$$\Sb^{1} :\equiv \{(x,\, y) : \Rb^2 \mid x^2 + y^2 = 1\},$$
in Book HoTT alone we do not have the tools to say that $S^1$ is the homotopy type of $\Sb^1$.

In his Real Cohesive Homotopy Type Theory \cite{RealCohesion}, Shulman solves
this issue by adding a system of modalities which includes the \emph{shape}
modality $\shape$ that takes a type $X$ to its homotopy type $\shape
X$.\footnote{The symbol ``$\shape$'' is an \emph{esh}, the IPA symbol for the
 voiceless palato-alveolar fricative phoneme \emph{/sh/} that begins the word
 ``shape''. It is not an integral sign.} In Real Cohesive HoTT, every type has a
spatial structure and every map is continuous with respect to this spatial
structure. This spatial structure is distinct from the \emph{homotopical}
structure of identifications that every type has in homotopy type theory. But
these two structures are brought together by the $\shape$ modality, which allows
us to identify points by giving spatial paths between them. Formally, the $\shape$
modality is given by localizing at the type of Dedekind real numbers $\Rb$ --- in
other words, by identifying points which are connected by paths $\gamma
: \Rb \to X$.\footnote{In this
  paper, we reserve the term \emph{path} (in $X$) for function $\gamma : \Rb \to
  X$, while we use the term \emph{identification} for points of the type $x = y$
(for $x,\, y : X$). This conflicts with the terminology of the HoTT Book, in
which ``path'' is used for what we call identifications. But, in our setting, the shape modality $\shape$ takes a path $\gamma :
\Rb \to X$ and gives an identification $\gamma(0)^{\shape} = \gamma(1)^{\shape}$ in
the homotopy type $\shape X$. So, when one is working with homotopy types $\shape X$,
the difference between our terminology and the terminology of the HoTT Book is blurred.}

As with any modality, there is a \emph{modal unit} $(-)^{\shape} : X \to \shape X$,
a quotient map of sorts, which is the universal map from $X$ to a
\emph{discrete} type --- one with only homotopical and no spatial
structure.\footnote{That is, every path is constant in a discrete type, but
  there may still be non-trivial identifications between its points.} For any map $f : X \to Y$, we have a naturality square which induces a map from the fiber of $f$ over $y : Y$ to its \emph{homotopy fiber}, the fiber of $\shape f$:
\[
\begin{tikzcd}
\fib_f(y) \arrow[d] \arrow[r, dashed, "\delta"] & \fib_{\shape f}(y^{\shape}) \arrow[d] \\
X \arrow[d, "f"'] \arrow[r, "(-)^{\shape}"] & \shape X \arrow[d, "\shape f"]\\
Y \arrow[r, "(-)^{\shape}"'] & \shape Y
\end{tikzcd}
\]
The fibers of maps between discrete types are themselves discrete, so the map $\delta : \fib_f(y) \to \fib_{\shape f}(y^{\shape})$ factors uniquely through $(-)^{\shape} : \fib_f(y) \to \shape \fib_f(y)$ by the universal property of the unit. This gives us a useful diagram (Figure \ref{fig:modal.prism}) which I like to call \emph{the modal prism}.

\begin{figure}[h]
    \[\begin{tikzcd}
\fib_f(y) \arrow[rr, "\delta"] \arrow[dr, "(-)^{\shape}"']&  & \fib_{\shape f}(y^{\shape}) \\
& \shape \fib_f(y) \arrow[ur, "\gamma"'] &
\end{tikzcd}\]
\caption{The Modal Prism.}
\label{fig:modal.prism}
\end{figure}

Looking through the modal prism, we see a rainbow of different possibilities for a function $f : X \to Y$.
\begin{defn}
Let $f : X \to Y$ and consider the modal prism as in Figure 1. Then $f$ is
\begin{itemize}
    \item \emph{$\shape$-modal} if its fibers are discrete, that is, if $(-)^{\shape}$ is an equivalence for all $y : Y$,
    \item \emph{$\shape$-connected} if its fibers are homotopically contractible, that is, if $\shape \fib_{f}(y)$ is contractible for all $y : Y$,
    \item \emph{$\shape$-{\'e}tale} if its fibers are its homotopy fibers, that is, if $\delta$ is an equivalence for all $y : Y$.
    \item a \emph{$\shape$-equivalence} if its homotopy fibers are contractible, that is, if $\fib_{\shape f}(y^{\shape})$ is contractible for all $y : Y$,
    \item a \emph{$\shape$-fibration} if the homotopy type of its fibers are its homotopy fibers, that is, if $\gamma$ is an equivalence for all $y : Y$.
\end{itemize}
\end{defn}

For the shape modality, a map is modal when it has discrete fibers, and is a
modal equivalence, or (weak) homotopy equivalence, when it induces an
equivalence on homotopy types. It is modally connected when it has the stronger
property that its fibers are homotopically contractible; for comparison, consider the inclusion $x : \Rb \to \Rb^2$ of the $x$-axis, which is clearly a homotopy equivalence but is not $\shape$-connected since some of its fibers are empty. Finally, a $\shape$-{\'e}tale map is a weak relative of a covering map; it has a \emph{unique} lifting against any homotopy equivalence.

The notions of modal maps, connected maps, and modal equivalences appear in the
HoTT Book (\cite{HoTTBook}). For the $n$-truncation modality, these are
$n$-truncated and $n$-connected maps respectively, with modal equivalences not
given a specific name. The notion of modal {\'e}tale map is due to Wellen as a
``formally {\'e}tale map'' in \cite{WellenThesis}, building on work of Schreiber
in the setting of higher topos theory \cite{DiffCohomologyCohesiveTopos}. In the
case of $\shape$, it appears as a ``modal covering'' in \cite{WellenCovering}.

The notion of modality has also made its way into the $\infty$-categorical
literature through the work of Anel, Biederman, Finster, and Joyal (see
\cite{BlakersMasseyABFJ} and \cite{GoodwillieCalculusABFJ}). In these papers,
they define a modality as a \emph{stable orthogonal factorization system} (one
of the equivalent ways of defining a modality in HoTT), and translate a
homotopy type theoretic generalized Blakers-Massey Theorem into the language of
$\infty$-categories and apply it to the
Goodwillie calculus of functors. As Shulman has proven that every $\infty$-topos
models HoTT (\cite{ShulmanHoTTModels}), the results in
this paper concerning modal fibrations (in Section \ref{sec:modal.fibrations}) apply in any
$\infty$-topos as well.

The notion of modal fibration is, as far as I know, novel to this paper. It
gives a good notion of fibration in real cohesion which works not just for set
level spaces (e.g. manifolds) but also spaces with both topological and
homotopical content (e.g. orbifolds and Lie groupoids). A map is a
$\shape$-fibration when the homotopy type of its fibers are the fibers of its
action on homotopy types; this gives us the long fiber sequence on homotopy
groups we expect from a fibration in real cohesion. This definition closely resembles the classical notion of \emph{quasi-fibration} due to Dold and Thom \cite{DoldThomQuasifibration}, though it is much better behaved (see Remark \ref{rmk:quasifibration}).

In
Section \ref{sec:modality.refresher}, we will refresh ourselves on modalities and look through the modal
prism to see the different kinds of functions associated with a modality. Then,
in Section \ref{sec:modal.fibrations}
we will develop the basic theory of ${\modal}$-fibrations for an arbitrary modality
${\modal}$, and justify the name. In summary, the ${\modal}$-fibrations are closed under composition and pullback and
may be characterized in any one of the following ways.
\begin{thm}
  For a map $f : X \to Y$, the following are equivalent:
  \begin{enumerate}
  \item $f$ is a ${\modal}$-fibration.
  \item ${\modal}$ preserves all fibers of $f$.
  \item ${\modal}$ preserves all pullbacks along $f$.
  \item The ${\modal}$-connected/${\modal}$-modal and
    ${\modal}$-equivalence/${\modal}$-{\'e}tale factorizations of $f$ agree.
  \item The ${\modal}$-modal factor of $f$ is ${\modal}$-{\'e}tale.
  \item The ${\modal}$-equivalence factor of $f$ is ${\modal}$-connected.
  \item The $\modal$-naturality square of $f$ is $\modal$-cartesian.
  \item The connecting map $\textbf{tot}(\gamma)$ between the two factorizations
    of $f$ is a ${\modal}$-fibration.
    \item $f$ has ${\modal}$-locally constant ${\modal}$-fibers in the sense that
      ${\modal} \fib_f : Y \to \Type_{{\modal}}$ factors through ${\modal} Y$.
  \item (If $\modal$-units are surjective:) For every $x : X$, the induced map

    $\fib_{(-)^{{\modal}}}(x^{{\modal}}) \to \fib_{(-)^{{\modal}}}((f x)^{{\modal}})$ is ${\modal}$-connected.
  \end{enumerate}
\end{thm}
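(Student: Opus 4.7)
The plan is to treat condition (1) as the central hub and show every other condition is equivalent to it. First, (1) $\Leftrightarrow$ (2) is essentially definitional: the map $\gamma$ from the modal prism is by construction the canonical comparison between $\bang \fib_f(y)$ and $\fib_{\bang f}(y^{\bang})$, so $\gamma$ being an equivalence is exactly the preservation of this fiber by $\bang$. Next I would make both factorizations explicit: the $\bang$-connected/$\bang$-modal factorization of $f$ is $X \to \Sigma_{y:Y} \bang \fib_f(y) \to Y$, while the $\bang$-equivalence/$\bang$-\'etale factorization is $X \to Y \times_{\bang Y} \bang X \to Y$, recognizing $Y \times_{\bang Y} \bang X \simeq \Sigma_{y:Y} \fib_{\bang f}(y^{\bang})$. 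The connecting map $\textbf{tot}(\gamma)$ between the two middle terms is fiberwise $\gamma_y$, giving (1) $\Leftrightarrow$ (4) at once. For (5), the $\bang$-connected first factor induces an equivalence on $\bang$, so the modal factor's homotopy fiber at $y^{\bang}$ is $\fib_{\bang f}(y^{\bang})$, and the modal factor is \'etale precisely when $\gamma$ is an equivalence. The argument for (6) is symmetric, using the prism decomposition $\delta = \gamma \circ (-)^{\bang}$ and that units always have $\bang$-contractible fibers. For (7), $\textbf{tot}(\gamma)$ is already a $\bang$-equivalence since both source and target have $\bang$-image $\bang X$; if it is a $\bang$-fibration, then its fibers are $\bang$-contractible, and being fibers of a map between $\bang$-modal types (using lexness of $\bang$), they are actually contractible, so $\textbf{tot}(\gamma)$ is an equivalence, which is (4).

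For the structural conditions, (3) $\Rightarrow$ (2) is trivial since fibers are pullbacks along point inclusions. For (2) $\Rightarrow$ (3), I would decompose any pullback as $X \times_Y Z \simeq \Sigma_{z:Z} \fib_f(g(z))$, apply $\bang$, substitute via (2), and use descent for modal fibrations to identify the result with $\bang X \times_{\bang Y} \bang Z$. For (8), condition (1) immediately yields the factorization $y' \mapsto \fib_{\bang f}(y')$; conversely, any factorization $g : \bang Y \to \Type_{\bang}$ of $\bang \fib_f$ exhibits $\Sigma_y \bang \fib_f(y) = \Sigma_y g(y^{\bang})$ as the pullback of $\Sigma_{y' : \bang Y} g(y') \to \bang Y$ along the unit, which by the characterization of \'etale maps as pullbacks of maps between modal types makes the modal factor \'etale, giving (5). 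For (9) with $f$ surjective, a fiber-chase identifies the fiber of $\tilde{f}_{x,p}$ over $(y, \refl)$ with the fiber of $\delta = \gamma \circ (-)^{\bang}$ at $(x^{\bang}, p^{\bang})$; since units are always $\bang$-connected, $\tilde{f}_{x,p}$ is $\bang$-connected iff $\gamma$ is, which for a lex modality means $\gamma$ is an equivalence, and surjectivity of $f$ ensures every $y : Y$ is covered.

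The main obstacle I anticipate is the descent step in (2) $\Rightarrow$ (3), where one must commute $\bang$ past a dependent sum, leveraging pullback stability of the connected/modal factorization. Secondary subtleties in (6), (7), and (9) all lean on lexness of the modality, via the fact that modal types are then closed under fibers so that a $\bang$-connected map between modal types is an equivalence; this is what allows one to promote the fiberwise $\bang$-connectedness in these conditions back to the actual equivalence $\gamma$ demanded by (1).
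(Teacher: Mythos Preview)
Your overall architecture is sound and close to the paper's, but there is one genuine error running through your treatment of (6), (7), and (9): you repeatedly invoke \emph{lexness} of $\bang$ to conclude that a $\bang$-connected map between $\bang$-modal types is an equivalence. The modality here is \emph{not} assumed lex (indeed, the theorem is vacuous for lex modalities, since then every map is a $\bang$-fibration). Fortunately the step you need does not require lexness at all. For \emph{any} modality, $\bang$-modal types are closed under identity types and dependent sums, so the fiber of a map between $\bang$-modal types is automatically $\bang$-modal; a $\bang$-connected $\bang$-modal type is contractible, and hence a $\bang$-connected map between $\bang$-modal types is an equivalence. Once you replace every appeal to lexness with this elementary closure fact, your arguments for (6), (7), and (9) go through essentially as written.

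For (7) specifically, your route (``$\textbf{tot}(\gamma)$ is always a $\bang$-equivalence by 2-out-of-3, so if it is a $\bang$-fibration it is $\bang$-connected, hence fiberwise $\gamma$ is an equivalence'') is correct and arguably more direct than the paper's. The paper instead observes that $\bang$-fibrations are closed under composition, so if $\textbf{tot}(\gamma)$ is a $\bang$-fibration then the $\bang$-modal factor of $f$ (being $\textbf{tot}(\gamma)$ composed with the $\bang$-\'etale factor) is a $\bang$-fibration; since it is also $\bang$-modal it is $\bang$-\'etale, giving (5).

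For (2) $\Rightarrow$ (3), your sketch (``decompose the pullback as $\Sigma_{z} \fib_f(gz)$, substitute, and use descent'') hides the real work. The paper's route is cleaner: first prove that $\bang$-fibrations are closed under pullback (this uses that $\bang$-\'etale maps are closed under pullback, via Rijke's modal descent), so the pulled-back map $g$ is itself a $\bang$-fibration; then compare fibers of $\bang g$ and $\bang f$ directly using the fiberwise equivalences $\gamma_g$ and $\gamma_f$, concluding by $\bang$-induction. Your approach can be made to work, but you should be explicit that the composite $X \times_Y Z \to \Sigma_z \bang\fib_f(gz) \to \bang Z \times_{\bang Y} \bang X$ is $\bang$-connected into a $\bang$-modal type (the second map being a pullback of the $\bang$-unit of $Z$), and check that the resulting equivalence agrees with the canonical comparison map.
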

In particular, we will prove in Theorem
\ref{thm:fibration.iff.locallyconstant.modal.fibers} that a map $f : X \to Y$ is
an ${\modal}$-fibration if and only if the type family ${\modal} \fib_f : Y \to \Type$ factors
through the modal unit $(-)^{{\modal}} : Y \to {\modal} Y$. For the modality $\shape$,
this means that a map is a $\shape$-fibration if and only if the homotopy type of
its fiber over $y : Y$ is locally constant in $y$; that is, a map is a $\shape$-fibration if and only if its fibers form a \emph{local system} on its codomain.

We will also characterize the
$\trunc{-}_n$-fibrations as those maps which are surjective on $\pi_{n+1}$ in
Corollary \ref{cor:truncation.fibrations}.

In Section \ref{sec:cohesion.review}, we give a brief review of Shulman's
Real Cohesive HoTT. We then prove in Section
\ref{sec:discrete.classifying.types} that the classifying types of bundles of
discrete structures are themselves discrete (see Theorem
\ref{thm:locally.crisply.discrete.BAut.discrete} for the precise statement). As
a corollary, we find in Theorem \ref{thm:characterizing.shape.fibration} that
maps whose fibers have a merely constant homotopy type are $\shape$-fibrations. Morally,
this result says that if all the fibers of a map have the same homotopy type so
that one can comfortably write
$$F \to E \xto{p} B$$
with $F$ well defined up to homotopy, then $p$ is a $\shape$-fibration.

In the remaining sections, we will show how this theory can be applied to synthetic algebraic topology. Because the homotopy type of the fibers of a $\shape$-fibration are its homotopy fibers, whenever
$$F \to E \xto{p} B$$
is a fiber sequence with $p$ a $\shape$-fibration,
$\shape F \to \shape E \xto{\shape p} \shape B$
is also a fiber sequence. Using the fact that the fibers of the map $(\cos,\sin)
: \Rb \to \Sb^1$ are merely equivalent to $\Zb$, Theorem \ref{thm:characterizing.shape.fibration} implies that this map is a $\shape$-fibration, and that therefore,
$$\Zb \to \shape \Rb \to \shape \Sb^1$$
is a fiber sequence. Since $\shape \Rb \simeq \ast$ is contractible, this
calculates the loop space of the topological circle $\Sb^1$ without passing
through the higher inductive circle $S^1$. We consider this and other examples
of $\shape$-fibrations, including:
\begin{itemize}
\item The map $(\cos, \sin) : \Rb \to \Sb^1$ (in Section \ref{subsec:universal.cover.of.circle}).
  \item The homogeneous coordinates $\Sb^{n} \to \Rb P^n$, $\Sb^{2n+1} \to \Cb
    P^n$, and $\Sb^{4n + 3} \to \Hb P^n$, including as special cases the Hopf
    fibration $\Sb^3 \to \Cb P^1$ and the quaternionic Hopf fibration $\Sb^7 \to
    \Hb P^1$ (in Section \ref{subsec:hopf.fibrations}).
    \item The rotation map $\textbf{SO}(n + 1) \to \Sb^n$ (in Section \ref{subsec:rotation.of.spheres}).
    \item The homotopy quotient $\Rb \vee \Rb \to (\Rb \vee \Rb) \sslash C_2$,
      and many other homotopy quotients (in Section \ref{sec:wedge.fibration.example}).
\end{itemize}

After this, we prove some corollaries for the theory of higher groups in
Sections \ref{sec:higher.groups} and \ref{sec:connectedness}. We begin by
reviewing the definition of higher groups, and then show that the homotopy
quotient $X \to X \sslash G$ of a type by the action of a crisp higher group is
always a $\shape$-fibration. We then prove that $\shape$ preserves the
connectedness of crisp types, and conclude that the homotopy type of a higher
group is itself a higher group.

Finally, in Section \ref{sec:Covering.Theory}, we turn to the theory of covering
spaces. We define the notion of covering following Wellen \cite{WellenCovering}, and show that the type of coverings on
a type is equivalent to the type of actions of its fundamental groupoid on
discrete sets. We then show that every pointed type has a universal cover,
and prove that this universal cover has the expected universal property. We end
by showing that the universal cover of a higher group is a higher group.

\begin{notation}
  In this paper, we will use Agda-inspired notation for the dependent pair and dependent function types. For a type family $E : B \to \Type$, we write
  \begin{align*}
    \dsum{b : B} E(b) &\equiv \sum_{b : B} E(b),\mbox{ and} \\
    \dprod{b : B} E(b) &\equiv \prod_{b : B} E(b)
\end{align*}
for the dependent pair (or depedent sum) type and the dependent function (or product) type respectively. The elements of $\dsum{b : B} E(b)$ are pairs $(b, e)$ with $b : B$ and $e : E(b)$. The elements of $\dprod{b : B} E(b)$ are functions $b \mapsto f(b)$ with $f(b) : E(b)$ for $b : B$.
  \end{notation}

\begin{acknowledgements}
I would like to thank Felix Wellen for introducing me to the modal covering
story, and for many interesting conversations on the topic. I would also like to
thank Egbert Rijke for his work on modalities and for a fruitful discussion on
modal fibrations. I would also like to thank the reviewer whose helpful comments have improved numerous parts of the exposition and cleaned up the proofs of a few lemmas. And, crucially, I would like to thank Emily Riehl for her helpful comments and guidance during the drafting of this paper.
\end{acknowledgements}

\section{Modalities and the Modal Prism}\label{sec:modality.refresher}

A modality is a way of changing what it means for two elements of a type to be identified. To each type $X$, we associate a new type ${{\modal}}X$ and a function $(-)^{{\modal}} : X \to {{\modal}}X$. For two points $x,\, y : X$ to be identified by the modality then means that $x^{{\modal}} = y^{{\modal}}$ as elements of ${{\modal}}X$. Here are a few examples of modalities, with emphasis on those we will focus on in this paper.

   \begin{itemize}
   \item With the trivial modality ${{\modal}}X = \ast$, any two points are uniquely identified.
   \item With the $n$-truncation modality $\trunc{-}_n$, two points are identified by giving an $(n-1)$-truncated identification between them. The base case is $\trunc{X}_{-2} = \ast$, the trivial modality.
   \item With the \emph{shape} modality $\shape$, two points may be identified by
     giving a path between them (that is, a map from the real line $\Rb$ which
     sends $0$ to one point and $1$ to the other). We call $\shape X$ the
     \emph{homotopy type} of a type $X$.\footnote{The modality $\shape$ appears
       as Definition 9.6 of \cite{RealCohesion}, and we review it in Section \ref{sec:cohesion.review}.}
   \item With the \emph{crystalline} modality $\mathfrak{I}$, two points may be
     identified by giving an \emph{infinitesimal} path between them. We call
     $\mathfrak{I} X$ the \emph{de Rham stack} of a type $X$.\footnote{The
       crystaline modality appears formally as Axiom 3.4.1 in
       \cite{WellenThesis}, and in the higher categorical setting in Definition
       4.2.1 of \cite{DiffCohomologyCohesiveTopos}, where it is called the
       \emph{infinitesimal shape} modality}
   \end{itemize}

   While the elementary theory of modalities appeared in the HoTT Book \cite{HoTTBook}, the notion was developed more fully by Rijke, Shulman, and Spitters in \cite{RSS}. In that paper, they give equivalences between four different notions of modality and prove a number of useful lemmas along the way. We will take our modalities to be ``higher modalities'', one of the many equivalent notions of modality.
 \begin{defn}
   A \emph{higher modality} consists of a \emph{modal operator} ${{\modal}} : \Type
   \to \Type$ together with:
   \begin{itemize}
   \item For each type $X$, a \emph{modal unit} $$(-)^{{\modal}}
   : X \to {{\modal}}X$$

   \item For every $A : \Type$ and $P : {{\modal}}A \to \Type$, an \emph{induction principle}
     $$\textbf{ind}^{{\modal}}_A : \big( \dprod{a : A} {{\modal}}P(a^{{\modal}}) \big) \to \big(\dprod{u : {{\modal}}A} {{\modal}}P(u) \big),$$
   \item For every $A : \Type$, $P : {\modal} A \to \Type$, $f : \dprod{a : A} \modal P(a^{\modal})$ and $x : A$, a \emph{computation rule}
     $$\textbf{comp}^{{\modal}}_A : \textbf{ind}^{{\modal}}_A(f)(x^{{\modal}}) = f(x),$$
   \item For any $u,\, v : {{\modal}}A$, a witness that the modal unit $(-)^{{\modal}} : u = v \to {{\modal}}(u = v)$ is an equivalence.
   \end{itemize}

   We say a type $X$ is \emph{${\modal}$-modal} if $(-)^{{\modal}} : X \to {\modal} X$ is an equivalence, and we define $$\Type_{{\modal}} :\equiv \dsum{X : \Type}{\term{is{\modal}Modal}(X)}$$ to be the universe of ${\modal}$-modal types. A type $X$ is \emph{${\modal}$-separated} if for all $x,\, y : X$, the type of identifications $x = y$ is ${\modal}$-modal.
\end{defn}

A modality is in particular a \emph{reflective subuniverse}: pre-composition by $(-)^{{\modal}}$ gives an equivalence
$$({\modal} X \to Z) \xto{\sim} (X \to Z)$$
whenever $Z$ is ${\modal}$-modal (see Theorem 1.13 of \cite{RSS}). Any map $\eta : X \to K$ from $X$ to a modal type $K$ which satisfies the same property is called a \emph{${\modal}$-unit}, since from this property it can be show that $K \simeq {\modal} X$ and $\eta = (-)^{{\modal}}$ under this equivalence.

Modal types are closed under the basic operations of dependent type theory in the following way.
\begin{lem}\label{lem:modality.facts}
Let $X$ be a type and $P : X \to \Type$ a family of types.
\begin{itemize}
    \item If $X$ is modal and for all $x : X$, $Px$ is modal, then $\dsum{x : X} Px$ is modal.
    \item If for all $x : X$, $Px$ is modal, then
    $\dprod{x : X} Px$ is modal.
\end{itemize}
\end{lem}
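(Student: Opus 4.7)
The plan is to exhibit, in each case, a pointwise inverse to the modal unit using the induction principle of the modality, and then verify the two round-trips using the computation rule and the fact that identity types in $\bang$-modal types are themselves $\bang$-modal (the fourth clause of the definition of higher modality).

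For the $\Pi$-case, fix $y : X$. Since $Py$ is modal, the evaluation map $\term{ev}_y : \big(\dprod{y' : X} Py'\big) \to Py$ given by $g \mapsto g(y)$ extends along the modal unit to a map $\overline{\term{ev}}_y$ satisfying $\overline{\term{ev}}_y(g^{\bang}) = g(y)$. Assembling these pointwise gives $\Phi(u)(y) :\equiv \overline{\term{ev}}_y(u)$. One round-trip $\Phi(g^{\bang}) = g$ follows from the computation rule together with function extensionality. The other, $u = \Phi(u)^{\bang}$, is an identification in the modal type $\bang\big(\dprod{y : X} Py\big)$; so by the universal property it suffices to check it when $u$ is of the form $g^{\bang}$, where it again reduces to the computation rule.

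For the $\Sigma$-case, first use modality of $X$ to extend $\term{pr}_1$ to $\overline{\term{pr}}_1 : \bang\big(\dsum{x : X} Px\big) \to X$ with $\overline{\term{pr}}_1((x, p)^{\bang}) = x$. Then define a family $Q(u) :\equiv P(\overline{\term{pr}}_1(u))$, whose values are modal since each $Px$ is. Applying the dependent universal property: transporting the natural candidate $\lambda (x, p).\, p$ along the computation rule for $\overline{\term{pr}}_1$ provides an inhabitant of $Q((x, p)^{\bang})$ for each $(x, p)$, which extends uniquely to a section $\sigma$ of $Q$ with $\sigma((x, p)^{\bang}) = p$ up to this transport. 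The inverse is $\Psi(u) :\equiv (\overline{\term{pr}}_1(u),\, \sigma(u))$, and the two round-trips are verified exactly as in the $\Pi$-case.

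The main obstacle, and really only minor bookkeeping, lies in the $\Sigma$-case: the family $Q$ already depends on the extended first projection, so one must transport along the computation rule for $\overline{\term{pr}}_1$ to match the types of $p$ and the target $Q((x, p)^{\bang})$. Aside from this, both arguments are straightforward instances of the induction and computation rules for a higher modality.
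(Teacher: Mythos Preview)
Your argument is correct. Both cases follow the standard pattern: use the induction principle to build a candidate inverse to the modal unit, verify one round-trip directly from the computation rule, and verify the other by $\bang$-induction after observing that the relevant identity type is modal (via the fourth clause of the higher-modality definition). The transport bookkeeping you flag in the $\Sigma$-case is genuine but routine, and you have identified exactly where it arises.

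The paper, however, does not give a direct proof at all: it simply cites Theorem~1.32 and Lemma~1.26 of \cite{RSS}. So your approach differs from the paper's in that you actually carry out the argument from the higher-modality axioms, whereas the paper defers entirely to the literature. What your approach buys is self-containment and a concrete illustration of how the induction and computation rules do the work; what the paper's citation buys is brevity and an acknowledgment that these closure properties are already standard, well-documented facts about modalities. Neither is wrong, but in the context of this paper---which treats modalities as established machinery to be applied rather than developed---the citation is the more appropriate choice.
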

\begin{proof}
See Theorem 1.32 and Lemma 1.26 of \cite{RSS}.
\end{proof}

As a corollary, a number of useful properties of modal types are also modal.
\begin{cor}
Let $A$ be a modal type. Then
$$\type{isContractible}(A) :\equiv \dsum{a : A}\big(\dprod{a' : A} (a = a')\big)$$
is modal. If $B$ is also a modal type and $f : A \to B$, then
$$\type{isEquiv}(f) :\equiv \dprod{b : B}\type{isContractible}(\fib_f(b))$$
is modal.
\end{cor}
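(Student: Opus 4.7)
The plan is to apply Lemma \ref{lem:modality.facts} twice for each half of the corollary, after first noting that identity types in a modal type are themselves modal.

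First I would establish the auxiliary fact that if $X$ is $\bang$-modal, then for any $x,y : X$, the type $x = y$ is $\bang$-modal. This follows from the last clause in the definition of a higher modality: for $u,v : \bang X$ the unit $(u = v) \to \bang(u = v)$ is an equivalence, so $u = v$ is modal; and since $X$ is modal, $(-)^{\bang} : X \to \bang X$ is an equivalence, hence the map on identity types $(x = y) \to (x^{\bang} = y^{\bang})$ is an equivalence, so $x = y$ is also modal. (Alternatively, modal types are closed under equivalence, and $\bang$-separatedness of modal types is Lemma 1.26 of \cite{RSS}.)

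For the first claim, consider $\type{isContractible}(A) \equiv \dsum{a : A}\bigl(\dprod{a' : A}(a = a')\bigr)$. By the auxiliary fact, each $(a = a')$ is modal. By the $\Pi$-closure in Lemma \ref{lem:modality.facts}, $\dprod{a':A}(a = a')$ is modal for every $a : A$. Since $A$ itself is assumed modal, the $\Sigma$-closure in the same lemma then gives that $\type{isContractible}(A)$ is modal.

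For the second claim, $\type{isEquiv}(f) \equiv \dprod{b : B} \type{isContractible}(\fib_f(b))$, I would first show each $\fib_f(b) \equiv \dsum{a : A}(f(a) = b)$ is modal: $A$ is modal, and since $B$ is modal the auxiliary fact makes each $(f(a) = b)$ modal, so $\Sigma$-closure applies. The first half of the corollary then shows $\type{isContractible}(\fib_f(b))$ is modal for each $b : B$, and $\Pi$-closure yields modality of the product. No step here is really an obstacle; the only subtlety is remembering to invoke the identity-type modality clause of the definition to handle the $a = a'$ and $f(a) = b$ pieces.
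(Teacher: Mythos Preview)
Your proof is correct and is precisely the argument the paper has in mind: the paper states this as a corollary of Lemma~\ref{lem:modality.facts} without writing out a proof, and your expansion---using the identity-type clause of the definition of higher modality to make $a=a'$ and $f(a)=b$ modal, then applying the $\Sigma$- and $\Pi$-closure of that lemma---is exactly the intended derivation.
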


When we use the induction principle of a modality, it often makes sense to think of it ``backwards''. That is, we think of the induction principle as saying that in order to map out of ${\modal} A$ into a modal type, it suffices to map out of $A$. Or, with variables, in order to define $T(u) : {\modal} P(u)$ for $u : {\modal} A$, it suffices to assume that $u \equiv a^{{\modal}}$ for $a : A$. In prose, we will just say that ${ \modal }$-induction lets us assume $u$ is of the form $a^{{\modal}}$.

 We can extend the operation of ${\modal}$ to a functor using the induction
 principle. If $f : X \to Y$, then define ${\modal} f : {\modal} X \to {\modal} Y$ by
 ${\modal} f (x^{{\modal}}) :\equiv f(x)^{{\modal}}$, or explicitly by
$${\modal} f :\equiv\, \textbf{ind}^{{\modal}}_X((-)^{{\modal}} \circ f).$$

Using the computation rule, we get a \emph{naturality square}
\[
\begin{tikzcd}
X \arrow[d, "f"'] \arrow[r, "(-)^{{\modal}}"] & {\modal} X \arrow[d, "{\modal} f"] \\
Y \arrow[r, "(-)^{{\modal}}"'] & {\modal} Y
\end{tikzcd}
\]

Any commuting square induces a map from the fiber of the left map to the fiber
of the right. Therefore, we get the map $\delta : \fib_f(y) \to \fib_{{\modal}
  f}(y^{{\modal}})$ for any $y : Y$ given by
$$\delta((x : X), (p : fx = y)) :\equiv (x^{{\modal}}, \textbf{comp}^{{\modal}} \cdot (\ap\,
(-)^{{\modal}}\, p)).$$
As the sum of modal types is modal, $\fib_{{\modal} f}(y^{{\modal}}) \equiv \dsum{u : {\modal} X} ({\modal} f (u) = y^{{\modal}})$ is modal. Therefore, this map factors through ${\modal} \fib_f(y)$ uniquely, giving us the \emph{modal prism}.

    \[\begin{tikzcd}
\fib_f(y) \arrow[rr, "\delta"] \arrow[dr, "(-)^{{\modal}}"']&  & \fib_{{\modal} f}(y^{{\modal}}) \\
& {\modal} \fib_f(y) \arrow[ur, "\gamma"'] &
\end{tikzcd}\]

 The modal prism divides functions in 5 possible kinds. Four of these possibilities arrange themselves into orthogonal factorization systems; the other gives a mediating notion which is the focus of this paper.
\begin{defn}
Let $f : X \to Y$ and consider the modal prism as in Figure 1. Then $f$ is
\begin{itemize}
    \item \emph{${\modal}$-modal} if $(-)^{{\modal}}$ is an equivalence for all $y : Y$,
    \item \emph{${\modal}$-connected} if ${\modal} \fib_{f}(y)$ is contractible for all $y : Y$,
    \item \emph{${\modal}$-{\'e}tale} if $\delta$ is an equivalence for all $y : Y$.
    \item a \emph{${\modal}$-equivalence} if $\fib_{{\modal} f}(y^{{\modal}})$ is contractible for all $y : Y$,
    \item a \emph{${\modal}$-fibration} if $\gamma$ is an equivalence for all $y : Y$.
\end{itemize}
\end{defn}
\begin{rmk}
By a quick application of ${\modal}$-induction, we see that $f$ is a ${\modal}$-equivalence if and only if ${\modal} f$ is an equivalence. And, by the lemma that a square is a pullback if and only if the induced map on fibers is an equivalence, $f$ is ${\modal}$-{\'e}tale if and only if its naturality square is a pullback.
\end{rmk}

\noindent We can see relations between these definitions right off the bat.
\begin{lem}
Let $f : X \to Y$. Then:
\begin{itemize}
    \item $f$ is ${\modal}$-{\'e}tale if and only if it is ${\modal}$-modal and a ${\modal}$-fibration.
    \item $f$ is ${\modal}$-connected if and only if it is a ${\modal}$-equivalence and a ${\modal}$-fibration.
\end{itemize}
\end{lem}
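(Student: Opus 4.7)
The plan is to reduce both equivalences to the triangle $\delta = \gamma \circ (-)^{\bang}$ in the modal prism (Figure~\ref{fig:modal.prism}), exploiting the key fact noted just above the lemma that the codomain $\fib_{\bang f}(y^{\bang}) \equiv \sum_{u : \bang X}(\bang f(u) = y^{\bang})$ is itself $\bang$-modal, as a sum of a modal base with modal identity types.

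For the first bullet, the direction $(\Leftarrow)$ is immediate: if both $(-)^{\bang}$ and $\gamma$ are equivalences on each fiber, so is their composite $\delta$. For $(\Rightarrow)$, suppose $\delta$ is an equivalence. Then $\fib_f(y)$ is equivalent to the $\bang$-modal type $\fib_{\bang f}(y^{\bang})$, and since modality is preserved under equivalence (by 2-out-of-3 applied to the naturality square of $(-)^{\bang}$ along this equivalence), $\fib_f(y)$ is itself modal. Hence $(-)^{\bang} : \fib_f(y) \to \bang \fib_f(y)$ is an equivalence, so $f$ is $\bang$-modal; applying 2-out-of-3 to the triangle then forces $\gamma$ to be an equivalence too, so $f$ is a $\bang$-fibration.

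For the second bullet, the direction $(\Leftarrow)$ is similarly direct: if $\gamma$ is an equivalence and its codomain $\fib_{\bang f}(y^{\bang})$ is contractible, then its domain $\bang \fib_f(y)$ is contractible, so $f$ is $\bang$-connected. For $(\Rightarrow)$, I will show that contractibility of $\bang \fib_f(y)$ forces contractibility of $\fib_{\bang f}(y^{\bang})$, from which $\gamma$ is automatically an equivalence as a map between contractible types. Let $\ast$ denote the unique point of $\bang \fib_f(y)$ and take $\gamma(\ast)$ as center of contraction. For uniqueness, the predicate ``$(u, p) = \gamma(\ast)$'' is $\bang$-modal in $(u, p)$ because identifications in modal types are modal, so $\bang$-induction reduces the goal to the case $u = x^{\bang}$. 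Then $p : f(x)^{\bang} = y^{\bang}$ lives in a modal identity type equivalent to $\bang(f(x) = y)$, so a second $\bang$-induction reduces to $p$ coming from some $p_0 : f(x) = y$. Unwinding the definition of $\delta$ and the factorization $\delta = \gamma \circ (-)^{\bang}$ gives $(x^{\bang}, p) = \delta(x, p_0) = \gamma((x, p_0)^{\bang})$, and contractibility of $\bang \fib_f(y)$ collapses $(x, p_0)^{\bang}$ to $\ast$.

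The main obstacle is the bookkeeping in the second bullet's $(\Rightarrow)$ direction, namely checking that the relevant predicates are sufficiently modal for $\bang$-induction to apply and that $\delta(x, p_0) = \gamma((x, p_0)^{\bang})$ holds from the definition of $\gamma$ as the factorization of $\delta$ through the modal unit.
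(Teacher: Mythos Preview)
Your first bullet and the $(\Leftarrow)$ direction of the second bullet are correct and match the paper's argument. The problem is in the $(\Rightarrow)$ direction of the second bullet, at your second $\bang$-induction step. You claim that $f(x)^{\bang} = y^{\bang}$ ``lives in a modal identity type equivalent to $\bang(f(x) = y)$'', but this equivalence fails for non-lex modalities. The last clause in the definition of a higher modality only says that identity types \emph{in} $\bang A$ are modal, i.e.\ $(u = v) \simeq \bang(u = v)$ for $u,v : \bang A$; it does not say $(a^{\bang} = b^{\bang}) \simeq \bang(a = b)$ for $a,b : A$. For a concrete counterexample take $\bang = \trunc{-}_0$ and $a = b = \base : S^1$: then $a^{\bang} = b^{\bang}$ is contractible while $\trunc{a = b}_0 = \trunc{\Zb}_0 = \Zb$. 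So there is no second $\bang$-induction available to reduce $p$ to some $\ap\,(-)^{\bang}\,p_0$, and the argument stalls.

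The paper sidesteps this by invoking Lemma 1.35 of \cite{RSS}, which proves directly that $\bang$-connected maps are $\bang$-equivalences (using the characterization of connected maps via lifting against modal families). With that in hand, both $\bang\fib_f(y)$ and $\fib_{\bang f}(y^{\bang})$ are contractible, so $\gamma$ is trivially an equivalence. Your instinct to inline this implication by explicit induction is reasonable, but ``connected $\Rightarrow$ equivalence'' genuinely needs the dependent elimination principle of the modality (or the equivalent orthogonality statement), not just induction on a single identity type; you should either cite the RSS lemma as the paper does or reproduce its proof rather than the shortcut you attempted.
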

\begin{proof}
Since the modal prism commutes, if $f$ is ${\modal}$-modal and a ${\modal}$-fibration, then it is ${\modal}$-{\'e}tale. On the other hand, since $\fib_{{\modal} f}(y^{{\modal}})$ is modal, if $f$ is ${\modal}$-{\'e}tale then $\fib_f(y)$ is ${\modal}$-modal and so $(-)^{{\modal}}$ is an equivalence and hence so is $\gamma$.

If $f$ is a ${\modal}$-equivalence and a ${\modal}$-fibration, then ${\modal} \fib_f(y)$
is contractible as it is equivalent to the contractible $\fib_{{\modal}
  f}(y^{{\modal}})$. On the other hand, if $f$ is ${\modal}$-connected, then it is a
${\modal}$-equivalence by Lemma 1.35 of \cite{RSS}, and so $\gamma$ is a map between contractible types and is therefore an equivalence.
\end{proof}

Recall that any function $f : X \to Y$ gives an equivalence $X \simeq \dsum{y : Y}{\fib_f(y)}$ over $Y$. Therefore, by totalizing the modal prism, we can find two factorizations of any map $f$, connected in the middle by $\term{tot}(\gamma)$:
\[
\begin{tikzcd}
                                                                               & X \arrow[ld, "\term{tot}((-)^{{\modal}})"'] \arrow[rd, "\term{tot}(\delta)"] \arrow[dd, crossing over, "f" description] &                                                         \\
                                                                               \dsum{y : Y} {\modal} \fib_f(y) \arrow[rd, "\fst"'] \arrow[rr, dashed, bend left=10, "\term{tot}(\gamma)"' pos=1.0] &                                                                                                       & \dsum{y: Y}\fib_{{\modal} f}(y^{{\modal}}) \arrow[ld, "\fst"] \\
                                                                               & Y                                                                                                     &
\end{tikzcd}
\]

In \cite{RSS}, Rijke, Shulman, and Spitters prove that the left factorization is a \emph{stable orthogonal factorization system}. In particular, $\term{tot}((-)^{{\modal}})$ is ${\modal}$-connected, and $\fst : \dsum{y : Y}{\modal} \fib_f(y) \to Y$ is ${\modal}$-modal, and these give the unique ${\modal}$-connected/${\modal}$-modal factorization of $f$. The connected/modal factorization of a map $f$ is also preserved under pullback; if $y : A \to Y$ is any map, then the factorization of the pullback $y^{\ast}f$ is the pullback of the factorization of $f$ along $y$.

This can be seen most clearly by viewing the factorization system from the point of view of type families. A map $f : X \to Y$ corresponds to the type family $\fib_f : Y \to \Type$, and its modal factor corresponds to the type family ${\modal} \fib_f : Y \to \Type$. On type families, pullback along $y : A \to Y$ corresponds to composition, so $y^{\ast}f$ corresponds to $\lam{a : A}\fib_f(y a) : A \to \Type$. The modal factorization of the pullback $y^{\ast}$ is then $\lam{a : A} {\modal} \fib_f(y a)$, which is precisely the pullback of the modal factorization of $f$.

In his thesis \cite{RijkeThesis}, Rijke proves that the right factorization is
an \emph{orthogonal factorization system}. In particular, $\term{tot}(\delta)$
is a ${\modal}$-equivalence and $\fst : \dsum{y : Y}{\fib_{{\modal} f}(y^{{\modal}})} \to
Y$ is ${\modal}$-{\'e}tale, and this is the unique
${\modal}$-equivalence/${\modal}$-{\'e}tale factorization of $f$. This is, however,
not a stable factorization system because the ${\modal}$-equivalences are not in
general preserved under pullback (see Remark
\ref{rmk:modal.equiv.not.stable.under.pullback} for an example).

Another important concept in the theory of modalities is that of a \emph{$\modal$-cartesian square} (see, for example, Definition 3.7.1 of \cite{BlakersMasseyABFJ}). We will make use of $\modal$-cartesian squares in developing the theory of modal fibrations, so we will establish a few lemmas here.

\begin{defn}
  A commuting square
  \[
    \begin{tikzcd}
      A \ar[r, "g"] \ar[d, "f"'] & B \ar[d, "h"] \\
      C \ar[r, "k"'] & D
      \end{tikzcd}
  \]
  is \emph{$\modal$-cartesian} if the cartesian gap map $A \to B \times_{D} C$ is $\modal$-connected.
  \end{defn}
  Note that a $\id$-cartesian square for the identity modality $\id$ is simply a pullback. Before proving our lemmas concerning $\modal$-cartesian squares,
    \begin{lem}\label{lem:fibers.of.gap.map.are.fibers.of.fibers}
  Consider a square
  \[
\begin{tikzcd}
	A & B \\
	C & D
	\arrow["f"', from=1-1, to=2-1]
	\arrow["g", from=1-1, to=1-2]
	\arrow["h", from=1-2, to=2-2]
	\arrow["k"', from=2-1, to=2-2]
\end{tikzcd}
  \]
  commuting via $S : \dprod{x : A} (k(f(x)) = h(g(x)))$. Let $c : C$, and define the map $G : \fib_{f}(c) \to \fib_{h}(k c)$ by
  $$G(x : A,\, w : f x = c) :\equiv (g x,\, S(x)\inv \cdot k_{\ast}w).$$
  Then for any $(b, p) : \fib_{h}(k c)$, we have an equivalence $\fib_{G}((b, p)) = \fib_{\text{gap}}((c, b p))$ with the fiber of the gap map $A \to B \times_{D} C$.
    \end{lem}
    \begin{proof}
    We find the equivalence as the following composite:
    \begin{align*}
      \fib_{G}((b, p)) :&\equiv \dsum{(x, w) : \fib_{g}(c)} (G(x, w) = (b, p)) \\
                        &= \dsum{x : A} \dsum{w : f x = c} ((g x, S(x)\inv \cdot k_{\ast} w) = (b, p)) \\
      &= \dsum{x : A} ((gx, fx, S(x)\inv) = (b , c, p ))\\
      &= \fib_{\text{gap}}((b, c , p)).
    \end{align*}
    \end{proof}

    Using this, we can give a characterization of $\modal$-cartesian maps which resembles the usual characterization of pullbacks as fiberwise equivalences.
  \begin{lem}\label{lem:cartesian.if.connected.on.fibers}
A commuting square
  \[
    \begin{tikzcd}
      A \ar[r, "g"] \ar[d, "f"'] & B \ar[d, "h"] \\
      C \ar[r, "k"'] & D
      \end{tikzcd}
  \]
  is $\modal$-cartesian if and only if for every $c : C$, the induced map
  $$G : \fib_{f}(c) \to \fib_{h}(kc)$$
  induced on fibers is $\modal$-connected.
    \end{lem}
    \begin{proof}
By Lemma \ref{lem:fibers.of.gap.map.are.fibers.of.fibers}, the fibers of the gap map are the fibers of $G$; so, the fibers of the gap map are $\modal$-connected if and only if the fibers of $G$ are.
    \end{proof}

    The following lemmas may be found in \cite{BlakersMasseyABFJ} as Lemmas 3.7.4 and 3.7.3 respectively. We will prove them in HoTT.
    \begin{lem}\label{lem:composing.cartesian.squares}
      Consider a pair of commuting squares:
      \[
\begin{tikzcd}
	A & B & E \\
	C & D & F
	\arrow[from=1-1, to=2-1]
	\arrow[from=1-1, to=1-2]
	\arrow[from=1-2, to=2-2]
	\arrow["k"', from=2-1, to=2-2]
	\arrow[from=1-2, to=1-3]
	\arrow[from=1-3, to=2-3]
	\arrow[from=2-2, to=2-3]
\end{tikzcd}
      \]
      Then
      \begin{enumerate}
\item If the left square and the right square are $\modal$-cartesian, then so is the composite square.
              \item If the left square and the composite square are $\modal$-cartesian, and $k$ is surjective, then the right square is $\modal$-cartesian.
              \item If the right square is a pullback and the composite square is $\modal$-cartesian, then the left square is $\modal$-cartesian.
      \end{enumerate}
    \end{lem}
    \begin{proof}
We will appeal to Lemma \ref{lem:cartesian.if.connected.on.fibers} a number of times. To prove the first fact, let $c : C$ and consider the following diagram:
\[
\begin{tikzcd}
	{\fib_f(c)} & {\fib_h(kc)} & {\fib_{\ell}(jkc)} \\
	A & B & E \\
	C & D & F
	\arrow["f"', from=2-1, to=3-1]
	\arrow[from=2-1, to=2-2]
	\arrow["h", from=2-2, to=3-2]
	\arrow["k"', from=3-1, to=3-2]
	\arrow[from=2-2, to=2-3]
	\arrow["\ell", from=2-3, to=3-3]
	\arrow["j"', from=3-2, to=3-3]
	\arrow[from=1-1, to=1-2]
	\arrow[from=1-2, to=1-3]
	\arrow[from=1-1, to=2-1]
	\arrow[from=1-2, to=2-2]
	\arrow[from=1-3, to=2-3]
\end{tikzcd}
\]
The squares are $\modal$-cartesian when the maps on fibers are $\modal$-connected, and $\modal$-connected maps are closed under composition, so the outer square is also $\modal$-cartesian.

With a modification of the above argument, we can prove the third fact. Suppose instead that the right square is a pullback, so that $\fib_{h}(kc) \to fib_{\ell}(jkc)$ is an equivalence. Then since the composite map $\fib_{f}(c) \to \fib_{\ell}(jkc)$ is $\modal$-connected, so is $\fib_{f}(c) \to \fib_{h}(kc)$.

To prove the second fact, suppose that $d : D$; then, since $k$ is assumed to be surjective and we are trying to prove a proposition, we may suppose we have a $c : C$ with $kc = d$. Then we can consider the above diagram again with $\fib_{f}(c) \to \fib_{h}(d)$ and $\fib_{f}(c) \to \fib_{\ell}(jd)$ modally connected. By right cancellability of modally connected maps (Lemma 1.33 of \cite{RSS}), we see that therefore $\fib_{h}(d) \to \fib_{\ell}(jd)$ is $\modal$-connected.

    \end{proof}

    \begin{lem}\label{lem:modal.square.modal.factorization}
      Suppose that
  \[
    \begin{tikzcd}
      A \ar[r, "g"] \ar[d, "f"'] & B \ar[d, "h"] \\
      C \ar[r, "k"'] & D
      \end{tikzcd}
  \]
  is a $\modal$-cartesian square. In its modal factorization
  \begin{equation}\label{diag:factored.square.lemma}
\begin{tikzcd}
	A & {\dsum{b : B} \modal\fib_{g}(b)} & B \\
	C & {\dsum{d : D} \modal\fib_{k}(d)} & D
	\arrow[from=1-1, to=2-1]
	\arrow[from=1-1, to=1-2]
	\arrow[from=1-2, to=1-3]
	\arrow[from=2-1, to=2-2]
	\arrow[from=2-2, to=2-3]
	\arrow[from=1-3, to=2-3]
	\arrow[from=1-2, to=2-2]
\end{tikzcd}
\end{equation}
  the right square is a pullback.
    \end{lem}
    \begin{proof}
      Here we will use the proof of this fact from Lemma 3.7.3 of \cite{BlakersMasseyABFJ}. Consider the following diagram:
      \[\begin{tikzcd}
	A & {B \times_DC} & {B \times_D(\dsum{d: D}\modal\fib_k(d))} & B \\
	& C & {\dsum{d : D} \modal \fib_k(d)} & D
	\arrow["f"', from=1-1, to=2-2]
	\arrow["y", from=1-3, to=1-4]
	\arrow["\ell"', from=2-2, to=2-3]
	\arrow["r"', from=2-3, to=2-4]
	\arrow["h", from=1-4, to=2-4]
	\arrow[from=1-3, to=2-3]
	\arrow["x", from=1-2, to=1-3]
	\arrow[from=1-1, to=1-2]
	\arrow[from=1-2, to=2-2]
\end{tikzcd}
      \]
      where we have taken two pullbacks. By construction, $\ell$ is $\modal$-connected and $r$ is $\modal$-modal. By stability of the $\modal$-connected / $\modal$-modal factorization system, $x$ is also $\modal$-connected and $y$ is $\modal$-modal. Since by hypothesis the gap map $A \to B \times_{D} C$ is $\modal$-connected, the composite $A \to B \times_{D}(\dsum{d : D} \modal \fib_{k}(d))$ is $\modal$-connected, so by the uniqueness of $\modal$-connected / $\modal$-modal factorizations, we see that $B \times_{D} (\dsum{d :D}\modal\fib_{k}(d))$ must be equivalent to $\modal$-factorization $\dsum{ b : B } \modal \fib_{g} (b)$. Therefore, the right hand pullback square in the above diagram is equivalent to the right hand square in Diagram \ref{diag:factored.square.lemma}, showing that it is a pullback.
    \end{proof}

Using these lemmas, we can prove a slight improvement of the Proposition 5.1 of \cite{Rijke-Cherubini:Modal.Descent}, using essentially the same proof.
\begin{thm}\label{thm:modal.descent}
Suppose that
  \[
    \begin{tikzcd}
      A \ar[r, "g"] \ar[d, "f"'] & B \ar[d, "h"] \\
      C \ar[r, "k"'] & D
      \end{tikzcd}
  \]
  is a $\modal$-cartesian square, and that $B$ and $D$ are $\modal$-modal. Then the square
  \[
    \begin{tikzcd}
      \modal A \ar[r, "\tilde{g}"] \ar[d, "\modal f"'] & B \ar[d, "h"] \\
      \modal C \ar[r, "\tilde{k}"'] & D
      \end{tikzcd}
  \]
  is a pullback, where the maps $\tilde{g} : \modal A \to B$ and $\tilde{k} : \modal C \to D$ are the unique factorizations of $g$ and $k$ respectively.
\end{thm}
\begin{proof}
  Consider the following diagram:
  \begin{equation}\label{diag:modal.descent}
\begin{tikzcd}
	A & {B \times_D \modal C} & B \\
	C & {\modal C} & D
	\arrow["f"', from=1-1, to=2-1]
	\arrow["{(-)^{\modal}}"', from=2-1, to=2-2]
	\arrow["{\tilde{k}}"', from=2-2, to=2-3]
	\arrow["h", from=1-3, to=2-3]
	\arrow[from=1-2, to=2-2]
	\arrow["x", from=1-2, to=1-3]
	\arrow["{r}", from=1-1, to=1-2]
\end{tikzcd}
\end{equation}
  We will start by showing that the map $r : A \to B \times_{D} \modal C$ is $\modal$-connected. Let $c : C$, and extend the diagram as follows:
  \[
\begin{tikzcd}
	{\fib_f(c)} & \fib_{\snd}(c^{\modal}) & {\fib_h(kc)} \\
	A & {B \times_D \modal C} & B \\
	C & {\modal C} & D
	\arrow["f"', from=2-1, to=3-1]
	\arrow["{(-)^{\modal}}"', from=3-1, to=3-2]
	\arrow["{\tilde{k}}"', from=3-2, to=3-3]
	\arrow["h", from=2-3, to=3-3]
	\arrow[from=2-2, to=3-2]
	\arrow["x", from=2-2, to=2-3]
	\arrow["{r}", from=2-1, to=2-2]
	\arrow["z", from=1-1, to=1-2]
	\arrow["\sim", from=1-2, to=1-3]
	\arrow[from=1-2, to=2-2]
	\arrow[from=1-3, to=2-3]
	\arrow[from=1-1, to=2-1]
\end{tikzcd}
  \]
  Since the square on the bottom right is a pullback, we get and equivalence between the map $z : \fib_{f}(c)\to \fib_{\snd}(c^{\modal})$ and the composite $G : \fib_{f}(c) \to \fib_{h}(kc)$. Since, by Lemma \ref{lem:cartesian.if.connected.on.fibers}, $G$ is $\modal$-connected, we see for all $c : C$ the map $z : \fib_{f}(c) \to \fib_{\snd}(c^{\modal})$ is $\modal$-connected. Since $(-)^{\modal}$ is always $\modal$-connected, we may conclude by Lemma 1.39 of \cite{RSS} that the map $r : A \to B \times_{D} \modal C$ is $\modal$-connected.

Now, as the pullback of maps between modal types, $B \times_{D} \modal C$ is modal. Therefore, $r$ is a $\modal$-connected map into a $\modal$-modal type, which makes it a $\modal$-unit. Therefore, the square on the right in Diagram \ref{diag:modal.descent} is the square we are trying to show is a pullback.
  \end{proof}
  \begin{rmk}
We can also see Theorem \ref{thm:modal.descent} as a corollary of Lemma \ref{lem:modal.square.modal.factorization} by noting that the right square in that lemma will be the square in the conclusion of Theorem \ref{thm:modal.descent} when $B$ and $D$ are modal.
  \end{rmk}

\section{Modal Fibrations}\label{sec:modal.fibrations}

Recall that a map $f : X \to Y$ is a \emph{${\modal}$-fibration} if and only if the induced map $\gamma : {{\modal}}\fib_f(y) \to \fib_{{{\modal}}f}(y^{{\modal}})$ is an equivalence for all $y : Y$.
In other words, $f : X \to Y$ is a ${{\modal}}$-fibration if ${\modal}$ preserves its fibers in the sense that whenever
$$F \to X \xto{f} Y$$
is a fiber sequence (for any pointing of $Y$), so is
$${{\modal}}F \to {{\modal}}X \xto{{{\modal}}f} {{\modal}}Y.$$
In other words, a ${{\modal}}$-fibration is a map $f$ whose fibers ``correctly represent'' the fibers of ${{\modal}}f$.

For example, consider the shape modality $\shape$. A $\shape$-fibration is a map $f
: X \to Y$ whose fibers have the same homotopy type as its homotopy fibers, the
fibers of its induced map $\shape f : \shape X \to \shape Y$ on homotopy types. An simple example of a $\shape$-fibrations is the projection $\pi_{1} : \Rb^{3} \to \Rb^{2}$; all the fibers of this map are identifiable with $\Rb$ whose shape is contractible, and the fibers of its induced map on homotopy types are contractible. An
example of a map which isn't a fibration is the inclusion $i : \ast \to \Rb^2$
of the origin into the real plane. Over the point $(1, 1) : \Rb^2$, the fiber of
$i$ is empty, and so its homotopy type is empty. But the induced map $\shape i :
\shape \ast \to \shape \Rb^2$ is an equivalence since $\shape \Rb^2$ is contractible, and so all the fibers of $\shape i$ are equivalent to $\ast$ which is not empty.

\begin{rmk}\label{rmk:quasifibration}
This is the sense in which a ${\modal}$-fibration is a ``fibration''. It most closely resembles the notion of \emph{quasi-fibration} of topological spaces introduced by Dold and Thom in \cite{DoldThomQuasifibration}, which is a continuous map $f : X \to Y$ such that for all $y \in Y$, the canonical map from the inverse image $f\inv(y)$ to the homotopy fiber $\fib_f(y)$ is a weak equivalence. If, seeking analogy, we take ``weak equivalence'' to be ${\modal}$-equivalence (which, for $\shape$, means that a map is a weak equivalence if it induces an equivalence on homotopy types), then a ${\modal}$-fibration is map $f$ whose fibers are weakly equivalent to its ``modal fibers'', the fibers of ${\modal} f$.

However, the notion of ${\modal}$-fibration is somewhat more robust than the notion of quasi-fibration, even in the case of $\shape$. As we will see, ${\modal}$-fibrations are closed under pullback, while quasi-fibrations are not. In this sense, $\modal$-fibrations more closely resemble the \emph{universal quasi-fibrations} introduced by Goodwillie in an email to the ALGTOP mailing list \cite{GoodwillieUniversalQuasifibration}. Intuitively, this is because universal quantification in type theory says more than it does in set theory --- it implies a sort of continuity. We will come back to this subtle point in the next section when we introduce the notion of a crisp variable from Shulman's real hohesion \cite{RealCohesion} in order to give a trick for showing a map is a $\shape$-fibration.
\end{rmk}

Before we get there, let's develop the basic theory of ${\modal}$-fibrations for a general modality. First, we will characterize ${\modal}$-fibrations as those maps on which the two factorization systems of ${\modal}$ agree.
\begin{lem}\label{lem:fibration.iff.factor.etale}
For $f : X \to Y$, the following are equivalent:
\begin{enumerate}
    \item $f$ is a ${\modal}$-fibration.
    \item The ${\modal}$-modal factor of $f$ is ${\modal}$-{\'e}tale.
    \item The ${\modal}$-equivalence factor of $f$ is ${\modal}$-connected.
    \item The ${\modal}$-connected/${\modal}$-modal and ${\modal}$-equivalence/${\modal}$-{\'e}tale factorizations of $f$ are equal as factorizations of $f$.
        \item The $\modal$-naturality square for $f$ is $\modal$-cartesian.
\end{enumerate}
\end{lem}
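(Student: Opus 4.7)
My plan is to exploit uniqueness of the two orthogonal factorization systems associated to $\bang$ to get $(2) \Leftrightarrow (3) \Leftrightarrow (4)$ essentially for free, and then link this cluster to $(1)$ by analyzing the naturality square of the $\bang$-modal factor of $f$. Throughout I write $X \xto{c} M \xto{p} Y$ for the $\bang$-connected/$\bang$-modal factorization and $X \xto{e} E \xto{q} Y$ for the $\bang$-equivalence/$\bang$-{\'e}tale one.

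By the preceding lemma, a map is $\bang$-{\'e}tale iff it is $\bang$-modal and a $\bang$-fibration, and is $\bang$-connected iff it is a $\bang$-equivalence and a $\bang$-fibration; in particular every $\bang$-connected map is a $\bang$-equivalence (cited from Lemma 1.35 of \cite{RSS}). If (2) holds then $p$ is $\bang$-{\'e}tale, while $c$ is $\bang$-connected hence a $\bang$-equivalence, so $(c, p)$ is also an $\bang$-equivalence/$\bang$-{\'e}tale factorization of $f$; uniqueness forces $(c, p) = (e, q)$, yielding (4) and hence (3). Symmetrically (3) implies (4) by uniqueness of the $\bang$-connected/$\bang$-modal factorization (since $\bang$-{\'e}tale implies $\bang$-modal), and (4) trivially gives both (2) and (3).

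For $(1) \Leftrightarrow (2)$, inspect the naturality square for $p$. Its fiber over $y$ is $\fib_p(y) \equiv \bang \fib_f(y)$, and the square is a pullback exactly when the comparison $\delta_p : \bang \fib_f(y) \to \fib_{\bang p}(y^{\bang})$ is an equivalence. Since $c$ is $\bang$-connected, $\bang c$ is an equivalence, and the identity $\bang f = \bang p \circ \bang c$ gives a canonical equivalence $\fib_{\bang p}(y^{\bang}) \simeq \fib_{\bang f}(y^{\bang})$. Composing $\delta_p$ with this equivalence produces a map $\bang \fib_f(y) \to \fib_{\bang f}(y^{\bang})$; granting that this composite is $\gamma$, we conclude $p$ is $\bang$-{\'e}tale iff $\gamma$ is an equivalence for every $y$, i.e.\ iff $f$ is a $\bang$-fibration.

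The hard part will be the bookkeeping step of verifying that this composite really is $\gamma$ rather than some other mediating map. To pin it down, precompose by the canonical map $\fib_f(y) \to \fib_p(y) \equiv \bang\fib_f(y)$ induced by $c$, which by construction agrees with $(-)^{\bang}$: chasing the totalization one sees the resulting map $\fib_f(y) \to \fib_{\bang f}(y^{\bang})$ is $\delta$. Since $\gamma$ is by definition the unique factorization of $\delta$ through $(-)^{\bang}$ into the modal type $\fib_{\bang f}(y^{\bang})$, the two maps coincide, closing the argument.
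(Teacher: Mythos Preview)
Your proof is correct and follows essentially the same strategy as the paper: the equivalence of (2), (3), (4) via uniqueness of the two factorization systems is handled identically, and (1)$\Leftrightarrow$(2) is in both cases established by showing that the naturality square of the $\bang$-modal factor $p$ is a pullback precisely when $\gamma$ is an equivalence. The only presentational difference is that the paper invokes Lemma~1.24 of \cite{RSS} to explicitly identify the $\bang$-unit of $M \equiv \sum_{y:Y}\bang\fib_f(y)$ with that of $X$, thereby making the bottom-right square of its big diagram literally the naturality square of $p$ with $\gamma$ as its fiber map; you instead use that $\bang c$ is an equivalence (since $c$ is $\bang$-connected) to transport $\delta_p$ to a map with the same source and target as $\gamma$, then verify it equals $\gamma$ by precomposing with $(-)^{\bang}$ and invoking uniqueness of the factorization of $\delta$. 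Both arguments unwind to the same identification $\bang M \simeq \bang X$, so there is no substantive difference.
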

\begin{proof}
We will first show that the first two conditions are equivalent; then we will argue that the next three are all equivalent by the uniqueness of each factorization. Finally, we note that the last condition is immediately equivalent to the third, since the $\modal$-equivalence factor of $f$ is the gap map of the $\modal$-naturality square.

By Lemma 1.24 of \cite{RSS}, the unique factorization of the map $$\lam{(y,x)} (y, x^{{\modal}})^{{\modal}} : \dsum{y : Y} \fib_f(y) \to {\modal}(\dsum{y : Y}{{\modal} \fib_f(y)})$$ through ${\modal}(\dsum{y : Y} \fib_f(y))$ is an equivalence. Therefore, the composite
$$\dsum{y : Y} {\modal} \fib_f(y) \xto{(-)^{{\modal}}} {\modal}(\dsum{y : Y}{{\modal} \fib_f(y)}) \xto{\sim} {\modal}(\dsum{y : Y} \fib_f(y))$$
is a ${\modal}$-unit. So, for any $y : Y$, we get a diagram

    \[
    \begin{tikzcd}
    \fib_f(y) \arrow[d] \arrow[r] & {{\modal}}\fib_f(y) \arrow[d] \arrow[r, "\gamma"] & \fib_{{{\modal}}f}(y^{{\modal}}) \arrow[d] \\
    X \arrow[r] \arrow[d, "f"']         & \dsum{y : Y} {\modal} \fib_f(y) \arrow[r] \arrow[d]      & {{\modal}}X \arrow[d]             \\
    Y \arrow[r, "\id"']                   & Y \arrow[r]                    & {{\modal}}Y
    \end{tikzcd}\]
    in which the bottom right square is a ${\modal}$-naturality square. The map $f$ is a ${{\modal}}$-fibration if and only if the connecting map $\gamma$ is an equivalence for all $y : Y$, and this happens if and only if the bottom right square is a pullback. But the bottom right square is a pullback precisely when $\fst : \dsum{y : Y} {\modal} \fib_f(y)\to Y$ is ${{\modal}}$-{\'e}tale.

On the other hand, the fourth condition implies the second and third by simply transporting the properties. Each of the second and third also imply the fourth by the uniqueness of each factorization. Without loss of generality, consider the second condition. The ${\modal}$-connected factor of $f$ is always a ${\modal}$-equivalence, so if the modal factor of $f$ is ${\modal}$-{\'e}tale then the ${\modal}$-connected/${\modal}$-modal factorization is a ${\modal}$-equivalence/${\modal}$-{\'e}tale factorization and so is equal to the canonical one by the uniqueness of such factorizations.
\end{proof}

As a corollary, we can prove that ${\modal}$-fibrations are closed under pullback, and give a descent theorem for $\modal$-fibrations.
\begin{cor} \label{cor:fibrations.closed.under.pullback}
Let
\[
\begin{tikzcd}
A \arrow[d, "g"'] \arrow[r, "x"] & X \arrow[d, "f"] \\
B \arrow[r, "y"'] & Y
\end{tikzcd}
\]
be a $\modal$-cartesian square. If $f$ is a fibration, then so is $g$. In particular, $\modal$-fibrations are closed under pullback.
\end{cor}
\begin{proof}
Consider the following cube:
\begin{equation} \label{diag:fibration.cube}
\begin{tikzcd}
	& {\modal A} && {\modal X} \\
	A && X \\
	& {\modal B} && {\modal Y} \\
	B && Y
	\arrow[from=2-1, to=1-2]
	\arrow[from=4-1, to=3-2]
	\arrow["g"', from=2-1, to=4-1]
	\arrow[from=1-2, to=3-2]
	\arrow[from=1-2, to=1-4]
	\arrow["y"', from=4-1, to=4-3]
	\arrow[from=3-2, to=3-4]
	\arrow["{\modal f}", from=1-4, to=3-4]
	\arrow[from=4-3, to=3-4]
	\arrow[from=2-3, to=1-4]
	\arrow[from=4-1, to=3-4]
	\arrow[from=2-1, to=2-3, crossing over]
	\arrow[from=2-3, to=4-3, crossing over]
	\arrow[from=2-1, to=1-4, crossing over]
\end{tikzcd}
\end{equation}
By hypothesis, the front face is $\modal$-cartesian and, since $f$ is a $\modal$-fibration, so is the rightmost face. Therefore, by Lemma \ref{lem:composing.cartesian.squares}, the diagonal square is $\modal$-cartesian. Then, by Theorem \ref{thm:modal.descent}, the back face is a pullback. Then, by Lemma \ref{lem:composing.cartesian.squares} again, the leftmost face is $\modal$-cartesian, which shows that $g$ is a $\modal$-fibration.
  \end{proof}

\begin{rmk}
It is at this point that we require a full modality, rather than just a
\emph{reflective subuniverse}. The proof of Theorem \ref{thm:modal.descent} uses the fact that $\modal$-units are $\modal$-connected, a fact which characterizes modalities amongst localizations (also known as reflective subuniverses).
However, if
one could prove Theorem \ref{thm:modal.descent} without using this fact, or prove that the pullback of a ${\modal}$-{{\'e}tale} map is
${\modal}$-{{\'e}tale} for ${\modal}$ a reflective subuniverse, then we could prove the pullback stability of $\modal$-fibrations and so the rest of the
theory of ${\modal}$-fibrations would go through as well.
\end{rmk}

Using Lemma \ref{lem:composing.cartesian.squares} and the characterization of $\modal$-fibrations as those maps whose naturality squares are $\modal$-cartesian, we can show that $\modal$-fibrations have the same closure properties as $\modal$-cartesian squares.

\begin{thm}
  Let $f : X \to Y$ and $g : Y \to Z$ be maps.
  \begin{enumerate}
    \item If $f$ and $g$ are $\modal$-fibrations, then $g \circ f$ is a $\modal$-fibration.
    \item If $f$ and $g \circ f$ are $\modal$-fibrations, and $\modal f$ is surjective, then $g$ is a $\modal$-fibration.
    \item If $g$ is $\modal$-{\'e}tale and $g \circ f$ is a $\modal$-fibration, then $f$ is a $\modal$-fibration.
  \end{enumerate}
  \end{thm}
  \begin{proof}
    We apply Lemma \ref{lem:composing.cartesian.squares} to the squares
    \[
\begin{tikzcd}
	X & {Y } & Z \\
	{\modal X} & {\modal Y} & {\modal Z}
	\arrow[from=1-1, to=2-1]
	\arrow["{\modal f}"', from=2-1, to=2-2]
	\arrow["{\modal g}"', from=2-2, to=2-3]
	\arrow["f", from=1-1, to=1-2]
	\arrow[from=1-2, to=2-2]
	\arrow["g", from=1-2, to=1-3]
	\arrow[from=1-3, to=2-3]
\end{tikzcd}
    \]
    For the third part, remember that $g$ is $\modal$-{\'e}tale precisely when its naturality square is a pullback.
  \end{proof}

We now have the tools to characterize ${\modal}$-fibrations in another way. A modality is called \emph{lex} if it preserves all pullbacks. Not all modalities are lex; for example, the truncation modalities are not, and nor is $\shape$. The ${\modal}$-fibrations are precisely the maps along which ${\modal}$ is lex. That is, ${\modal}$ preserves all pullbacks of a map $f$ if and only if that map is a ${\modal}$-fibration.
\begin{thm}\label{thm:fibration.iff.all.pullbacks.preserved}
 A map $f : X \to Y$ is a ${{\modal}}$-fibration if and only if ${{\modal}}$ preserves every pullback of it in the sense that whenever the square on the left is a pullback, so is the square on the right.
\[
\begin{tikzcd}
A \arrow[d, "g"'] \arrow[r, "x"] & X \arrow[d, "f"] \\
B \arrow[r, "y"'] & Y
\end{tikzcd}
\quad\quad\quad\quad
\begin{tikzcd}
{{\modal}}A \arrow[d, "{{\modal}}g"'] \arrow[r, "{{\modal}}x"] & {{\modal}}X \arrow[d, "{{\modal}}f"] \\
{{\modal}}B \arrow[r, "{{\modal}}y"'] & {{\modal}}Y
\end{tikzcd}
\]
\end{thm}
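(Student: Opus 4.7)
The plan is to prove both directions by reducing pullback preservation to the fiber-preservation definition of a $\bang$-fibration, using the standard fact that a square is a pullback if and only if the induced map on fibers is an equivalence.

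For the easier ``if'' direction, I would specialize to pullbacks along points $y : \One \to Y$. The pullback of $f$ along such a $y$ is (up to equivalence) $\fib_f(y)$. Since $\bang \One \simeq \One$ and the map $\bang y : \One \to \bang Y$ picks out $y^{\bang}$, the hypothesis that $\bang$ preserves this pullback says exactly that the canonical comparison $\bang \fib_f(y) \to \fib_{\bang f}(y^{\bang})$ is an equivalence, which is the definition of $f$ being a $\bang$-fibration (this comparison map is the $\gamma$ of the modal prism, since both maps make the relevant square commute and the pullback is unique up to equivalence).

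For the ``only if'' direction, suppose $f$ is a $\bang$-fibration and consider an arbitrary pullback square with $g : A \to B$ the pullback of $f$ along $y : B \to Y$. By Corollary \ref{cor:fibrations.closed.under.pullback}, $g$ is itself a $\bang$-fibration. To check that the square
\[
\begin{tikzcd}
{\bang}A \arrow[d, "{\bang}g"'] \arrow[r, "{\bang}x"] & {\bang}X \arrow[d, "{\bang}f"] \\
{\bang}B \arrow[r, "{\bang}y"'] & {\bang}Y
\end{tikzcd}
\]
is a pullback, I would verify that for every $b : \bang B$ the induced map $\fib_{\bang g}(b) \to \fib_{\bang f}(\bang y(b))$ is an equivalence. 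Being an equivalence is a $\bang$-modal property, so by $\bang$-induction it suffices to consider $b \equiv b_0^{\bang}$ for $b_0 : B$. In that case I compose three equivalences: first $\fib_{\bang g}(b_0^{\bang}) \simeq \bang\fib_g(b_0)$ because $g$ is a $\bang$-fibration; next $\bang\fib_g(b_0) \simeq \bang\fib_f(y(b_0))$ by applying $\bang$ to the fiber equivalence $\fib_g(b_0) \simeq \fib_f(y(b_0))$ coming from the original pullback; and finally $\bang\fib_f(y(b_0)) \simeq \fib_{\bang f}(y(b_0)^{\bang})$ because $f$ is a $\bang$-fibration, noting that $y(b_0)^{\bang} = \bang y(b_0^{\bang})$ by naturality.

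The main obstacle is bookkeeping: I need to verify that the composite equivalence above really agrees with the canonical fiber-comparison map induced by the $\bang$-image of the pullback square, rather than just being some abstract zigzag of equivalences. This comes down to unpacking that each individual $\gamma$-type comparison map is natural with respect to the relevant pullback structure, and that the outer square obtained from the modal prism for $g$, the pullback equivalence, and the modal prism for $f$ really does assemble to the naturality square for $\bang$ applied to the original pullback. Once this naturality bookkeeping is done, the equivalence of the induced fiber map follows automatically and the theorem is proved.
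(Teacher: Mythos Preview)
Your proposal is correct and follows essentially the same approach as the paper: for the ``if'' direction you specialize to points of $Y$, and for the ``only if'' direction you use closure of $\bang$-fibrations under pullback to get that $g$ is a $\bang$-fibration, then combine the two $\gamma$-equivalences with $\bang$ of the fiber equivalence, and finish with $\bang$-induction since $\type{isEquiv}$ of a map between modal types is modal. The paper organizes the three equivalences into a commuting square (with the canonical fiber-comparison map as the fourth side) before invoking $\bang$-induction, which is exactly the ``bookkeeping'' you flag as the main obstacle; so you have correctly identified both the argument and its one subtle point.
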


\begin{rmk}
For the case of $\shape$, Theorem \ref{thm:fibration.iff.all.pullbacks.preserved} gives us a sufficient condition for a pullback to be a \emph{homotopy pullback} (that is, a pullback on homotopy types): if one of the legs is a $\shape$-fibration, then the pullback is a homotopy pullback.
\end{rmk}

\begin{proof}
If ${{\modal}}$ preserves all pullbacks of $f$, then by taking $B \equiv \ast$, we see that ${{\modal}}$ preserves all fibers of $f$ which by definition makes it a ${{\modal}}$-fibration.

    On the other hand, suppose that $f$ is a ${{\modal}}$-fibration and that the
    square on the left above is a pullback. Then the connecting map $\alpha :
    \fib_g(a) \to \fib_f(y a)$ is an equivalence for all $a : A$. Furthermore,
    $g$ is also a ${{\modal}}$-fibration by Corollary \ref{cor:fibrations.closed.under.pullback} and therefore the maps $\gamma_f : {{\modal}}\fib_f(y a) \to \fib_{{{\modal}}f}((y a)^{{\modal}})$ and $\gamma_g : {{\modal}}\fib_g(a) \to \fib_{{{\modal}}g}(a^{{\modal}})$ are equivalences for all $a : A$. These maps fit together into a commuting square:
\[
\begin{tikzcd}
{{\modal}}\fib_g(a) \arrow[r, "{\modal}\alpha"] \arrow[d, "\gamma_g"'] & {{\modal}}\fib_f (y a) \arrow[d, "\gamma_f"] \\
\fib_{{{\modal}}g}(a^{{\modal}}) \arrow[r] & \fib_{{{\modal}}f}((y a)^{{\modal}})
\end{tikzcd}
\]
Since the sides and top are equivalences, the bottom is also an equivalence.

Now, in order to show that the square on the right is a pullback, we need for the induced map $\zeta \colon \fib_{{{\modal}}g}(u) \to \fib_{{{\modal}}f}({{\modal}}y(u))$ to be an equivalence for all $u : {{\modal}}B$. But we have only shown it for $u \equiv a^{{\modal}}$, since ${{\modal}}y(a^{{\modal}}) = (y a)^{{\modal}}$ by naturality. Luckily, as both $\fib_{{{\modal}}g}(u)$ and $\fib_{{{\modal}}f}({{\modal}}y(u))$ are ${{\modal}}$-modal, $\type{isEquiv}(\zeta)$ is also ${{\modal}}$-modal for all $u : {{\modal}}B$. We may therefore assume that $u \equiv a^{{\modal}}$ by ${{\modal}}$-induction.
\end{proof}

As a corollary of this, we can prove a partial stability of the
${\modal}$-equivalence/${\modal}$-{\'e}tale factorization system. A factorization
system is stable if the left class is stable under pullback.

\begin{rmk}\label{rmk:modal.equiv.not.stable.under.pullback}
The class of ${\modal}$-equivalences is not stable under pullback in general. For example, consider the following pullback
\[
\begin{tikzcd}
\emptyset \arrow[r] \arrow[d] & \ast \arrow[d, "1"] \\
\ast \arrow[r, "0"'] & \Rb
\end{tikzcd}
\]
Though the bottom map is a $\shape$-equivalence since $\Rb$ is homotopically
contractible, the top map is not a $\shape$-equivalence.
\end{rmk}

On the other hand, ${\modal}$-equivalences are preserved by pullback along ${\modal}$-fibrations.
\begin{cor}
Suppose that the following square is a pullback. If $f$ is a ${\modal}$-fibration and $y$ a ${\modal}$-equivalence, then $x$ is a ${\modal}$-equivalence.
\[
\begin{tikzcd}
A \arrow[d, "g"'] \arrow[r, "x"] & X \arrow[d, "f"] \\
B \arrow[r, "y"'] & Y
\end{tikzcd}
\]

\end{cor}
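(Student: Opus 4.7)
The plan is to invoke Theorem \ref{thm:fibration.iff.all.pullbacks.preserved} directly: since $f$ is a $\bang$-fibration, the modality $\bang$ preserves every pullback of $f$, and in particular it preserves the given square. So the induced square
\[
\begin{tikzcd}
\bang A \arrow[d, "\bang g"'] \arrow[r, "\bang x"] & \bang X \arrow[d, "\bang f"] \\
\bang B \arrow[r, "\bang y"'] & \bang Y
\end{tikzcd}
\]
is still a pullback.

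Next I would use the hypothesis that $y$ is a $\bang$-equivalence. By the remark immediately following the definition of the modal prism, this is equivalent to saying that $\bang y$ is an equivalence. Since equivalences are stable under pullback in any pullback square, $\bang x$ is also an equivalence. Invoking the same remark in the other direction, this means $x$ is a $\bang$-equivalence, which is what we wanted.

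There is no real obstacle here; the only subtle point is that one must actually apply Theorem \ref{thm:fibration.iff.all.pullbacks.preserved} to get a pullback of $\bang$-images, since it is \emph{not} true in general that $\bang$-equivalences are preserved under pullback (as the remark \ref{rmk:modal.equiv.not.stable.under.pullback} on $0, 1 : \ast \to \Rb$ shows). The $\bang$-fibration hypothesis on $f$ is doing all of the work by turning the pullback of types into a pullback of modal types, at which point stability of equivalences under pullback finishes the argument.
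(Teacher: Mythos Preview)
Your proof is correct and follows essentially the same approach as the paper's own proof: apply Theorem \ref{thm:fibration.iff.all.pullbacks.preserved} to obtain a pullback square after applying $\bang$, then use that $\bang y$ is an equivalence and that equivalences are stable under pullback to conclude $\bang x$ is an equivalence. Your additional commentary about why the fibration hypothesis is essential is a nice touch, but the underlying argument is identical.
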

\begin{proof}
Since $f$ is a ${\modal}$-fibration, the square
\[
\begin{tikzcd}
{{\modal}}A \arrow[d, "{{\modal}}g"'] \arrow[r, "{{\modal}}x"] & {{\modal}}X \arrow[d, "{{\modal}}f"] \\
{{\modal}}B \arrow[r, "{{\modal}}y"'] & {{\modal}}Y
\end{tikzcd}
\]
is also a pullback. But ${\modal} y$ is an equivalence by hypothesis, and therefore so is ${\modal} x$.
\end{proof}

All of this pullback preserving lets us add a few more conditions to the long list of equivalent conditions for lexness in Theorem 3.1 of \cite{RSS}.
\begin{prop}
The following are equivalent:
\begin{enumerate}
    \item The modality ${\modal}$ is lex.
    \item Every map is a ${\modal}$-fibration.
    \item If every map $f_i : A_i \to B_i$ is a ${\modal}$-fibration in a family of maps $f$, then the total map $\term{tot}(f) : \dsum{i : I} A_i \to \dsum{i : I}{B_i}$ is a ${\modal}$-fibration.
    \item For any map $f : X \to Y$, the connecting map $\term{tot}(\gamma) : \dsum{y : Y} {\modal} \fib_f(y) \to \dsum{y : Y}{\fib_{{\modal} f}(y^{{\modal}})}$ between factorizations of $f$ is a ${\modal}$-fibration.
    \item The universal map $\Type_{\ast} \to \Type$ is a ${\modal}$-fibration.
\end{enumerate}
\end{prop}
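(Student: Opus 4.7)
I will hub all the equivalences at (2), establishing (1)$\Leftrightarrow$(2), (2)$\Leftrightarrow$(3), (2)$\Leftrightarrow$(4), and (2)$\Leftrightarrow$(5), with the substantive direction being (4)$\Rightarrow$(2). The equivalence (1)$\Leftrightarrow$(2) is immediate from Theorem~\ref{thm:fibration.iff.all.pullbacks.preserved}: $\bang$ is lex iff it preserves every pullback, and every map is a $\bang$-fibration iff every pullback of every map is preserved; but every pullback is a pullback of some map, so the two conditions agree. For (2)$\Leftrightarrow$(5), any $f : X \to Y$ is the pullback of $\pi : \Type_{\ast} \to \Type$ along its classifying family $\fib_f : Y \to \Type$, so Corollary~\ref{cor:fibrations.closed.under.pullback} transfers the $\bang$-fibration property from $\pi$ to $f$; the converse is a special case.

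For (3)$\Rightarrow$(2), given $f : X \to Y$, I realize $f$ as $\term{tot}(g)$ where $g_y : \fib_f(y) \to \mathbf{1}$ is the unique map, indexed by $y : Y$. Under the canonical equivalences $\sum_{y : Y} \fib_f(y) \simeq X$ and $\sum_{y : Y} \mathbf{1} \simeq Y$, this total map is $f$. Each $g_y$ is a $\bang$-fibration, since for any map to $\mathbf{1}$ the connecting map $\gamma$ unwinds to the identity on $\bang (\text{source})$. Hypothesis (3) then concludes that $f$ is a $\bang$-fibration. The implications (2)$\Rightarrow$(3) and (2)$\Rightarrow$(4) are immediate.

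The main obstacle is (4)$\Rightarrow$(2). Given $f : X \to Y$, I will show that $\term{tot}(\gamma)$ is an equivalence, which is the same as saying that $f$ is a $\bang$-fibration. First, $\term{tot}(\gamma)$ is always $\bang$-modal: its fibers are fibers of the maps $\gamma_y : \bang \fib_f(y) \to \fib_{\bang f}(y^{\bang})$ between $\bang$-modal types, and such fibers are $\bang$-modal by Lemma~\ref{lem:modality.facts}. Hypothesis (4) adds that $\term{tot}(\gamma)$ is a $\bang$-fibration, and a map that is both $\bang$-modal and a $\bang$-fibration is $\bang$-{\'e}tale.

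Second, $\term{tot}(\gamma)$ is a $\bang$-equivalence. By the modal prism, $\term{tot}(\delta) = \term{tot}(\gamma) \circ \term{tot}((-)^{\bang})$; the left-hand side is the $\bang$-equivalence factor of $f$, while $\term{tot}((-)^{\bang})$ is the $\bang$-connected factor, which is a $\bang$-equivalence by Lemma~1.35 of \cite{RSS}. Two-out-of-three for $\bang$-equivalences (which is inherited from two-out-of-three for equivalences via functoriality of $\bang$) forces $\term{tot}(\gamma)$ to be one as well. Finally, any map that is simultaneously $\bang$-{\'e}tale and a $\bang$-equivalence is an equivalence, because its fibers equal its homotopy fibers (by $\bang$-{\'e}taleness) and those homotopy fibers are contractible (by being a $\bang$-equivalence). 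So $\term{tot}(\gamma)$ is an equivalence, as required.
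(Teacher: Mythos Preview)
Your proof is correct and the overall scaffolding (hubbing at (2), handling (1)$\Leftrightarrow$(2) and (5)$\Rightarrow$(2) via pullback closure) matches the paper, but your treatment of (3)$\Rightarrow$(2) and (4)$\Rightarrow$(2) differs genuinely from the paper's. For (3)$\Rightarrow$(2), the paper instead proves (3)$\Rightarrow$(4): since each $\gamma_y$ is a map between $\bang$-modal types, it is $\bang$-{\'e}tale and hence a $\bang$-fibration, so applying (3) to the family $(\gamma_y)_{y:Y}$ immediately gives that $\term{tot}(\gamma)$ is a $\bang$-fibration. Your route, realizing $f$ as the total of terminal maps $\fib_f(y) \to \mathbf{1}$, is just as short and avoids routing through (4). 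For (4)$\Rightarrow$(2), the paper composes $\term{tot}(\gamma)$ with the $\bang$-{\'e}tale factor of $f$: the composite is the $\bang$-modal factor of $f$, and since $\bang$-fibrations are closed under composition this factor is a $\bang$-fibration, hence (being already $\bang$-modal) $\bang$-{\'e}tale, which by Lemma~\ref{lem:fibration.iff.factor.etale} makes $f$ a $\bang$-fibration. Your argument instead shows $\term{tot}(\gamma)$ is an outright equivalence, combining the observation that it is $\bang$-modal (so $\bang$-{\'e}tale under (4)) with two-out-of-three for $\bang$-equivalences on the factorization $\term{tot}(\delta) = \term{tot}(\gamma)\circ\term{tot}((-)^{\bang})$. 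Both work; the paper's route leans on the closure-under-composition corollary, while yours is a bit more self-contained but uses two-out-of-three for $\bang$-equivalences.
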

\begin{proof}
Conditions $1$ and $2$ are equivalent by the characterization of
${\modal}$-fibrations in terms of pullback preservation, and condition 2 trivially
implies conditions 3, 4, and 5. Every map between ${\modal}$-modal types is
${\modal}$-{\'e}tale since for ${\modal}$-modal types the modal units are
equivalences. Therefore, the connecting map $\gamma : {\modal} \fib_f(y) \to
\fib_{{\modal} f}(y^{{\modal}})$ is ${\modal}$-{\'e}tale and in particular a
${\modal}$-fibration for any map $f : X \to Y$ and $y : Y$. This means that
condition $3$ implies condition $4$. On the other hand, since ${\modal}$-fibrations
are closed under composition, if $\term{tot}(\gamma)$ is a ${\modal}$-fibration
then the ${\modal}$-modal factor of any map $f : X \to Y$ is a ${\modal}$-fibration,
as it is the composite of $\term{tot}(\gamma)$ and the ${\modal}$-{\'e}tale factor
of $f$. Therefore, by Lemma \ref{lem:fibration.iff.factor.etale}, $f$ is a
${\modal}$-fibration, so that condition 4 implies condition 2.

Finally, the last condition implies the second since ${\modal}$-fibrations
are closed under pullback.
\end{proof}

All objects are ``fibrant'' with respect to ${\modal}$-fibrations in the sense that the terminal map is always a ${\modal}$-fibration. We can say something more --- every projection map $\fst : A \times B \to A$ is a ${\modal}$-fibration.
\begin{lem}
For any types $A$ and $B$, the projection map $\fst : A \times B \to A$ is a ${\modal}$-fibration.
\end{lem}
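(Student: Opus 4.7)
The plan is to invoke Theorem \ref{thm:fibration.iff.all.pullbacks.preserved} and reduce the statement to the fact that $\bang$ preserves binary products. First, any pullback of $\fst : A \times B \to A$ along a map $h : C \to A$ is canonically equivalent to $\fst : C \times B \to C$ (with top map $h \times \id$), so by Theorem \ref{thm:fibration.iff.all.pullbacks.preserved} it suffices to show that applying $\bang$ to the naturality square of $\fst$ along $h$ yields a pullback.

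Next, I will verify that $\bang$ preserves binary products. For any $\bang$-modal type $Z$, two applications of the universal property of the modal unit, combined with currying, give natural equivalences
\[(\bang X \times \bang Y \to Z) \simeq (\bang X \to (\bang Y \to Z)) \simeq (X \to (Y \to Z)) \simeq (X \times Y \to Z),\]
where the middle step uses that $\bang Y \to Z$ is modal by Lemma \ref{lem:modality.facts}. Since $\bang X \times \bang Y$ is itself modal by the same lemma, and the equivalence above is implemented by precomposition with $(-)^{\bang} \times (-)^{\bang}$, this map is a $\bang$-unit, so $\bang(X \times Y) \simeq \bang X \times \bang Y$.

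Finally, by the uniqueness of $\bang$-units, this equivalence is natural in $X$ and $Y$ and in particular carries $\bang \fst$ to the ordinary first projection $\fst : \bang X \times \bang Y \to \bang X$. Substituting into the $\bang$-image of our naturality square, we obtain
\[\begin{tikzcd}
\bang C \times \bang B \arrow[r, "\bang h \times \id"] \arrow[d, "\fst"'] & \bang A \times \bang B \arrow[d, "\fst"] \\
\bang C \arrow[r, "\bang h"'] & \bang A
\end{tikzcd}\]
which is manifestly a pullback. The only bookkeeping burden is confirming that the product-preservation equivalence is natural and intertwines $\bang \fst$ with $\fst$; this follows formally from uniqueness of $\bang$-units and is not a real obstacle.
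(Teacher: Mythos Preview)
Your proof is correct and rests on the same key fact as the paper's --- that $\bang$ preserves binary products --- but the two arguments package this differently. The paper verifies the definition of $\bang$-fibration directly: it observes that $(-)^{\bang} \times (-)^{\bang} : A \times B \to \bang A \times \bang B$ is a $\bang$-unit (citing Lemma 1.27 of \cite{RSS} rather than reproving it), writes down the resulting map of fiber sequences over $a : A$, and reads off that $\gamma : \bang\fib_{\fst}(a) \to \fib_{\bang\fst}(a^{\bang})$ is the identity on $\bang B$. You instead go through the pullback-preservation characterization of Theorem \ref{thm:fibration.iff.all.pullbacks.preserved}, reducing to the statement that $\bang$ applied to the square $\fst \circ (h \times \id) = h \circ \fst$ is again a pullback, which you then deduce from product preservation plus naturality. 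Your route is slightly more indirect but has the virtue of making it visible that what is really going on is pullback stability of projections combined with product preservation; the paper's route is shorter and stays closer to the definition. Either way the content is the same.
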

\begin{proof}
This follows directly from the fact that ${\modal}$ preserves products. The map $(-)^{{\modal}} \times (-)^{{\modal}} : A \times B \to {\modal} A \times {\modal} B$ is a ${\modal}$-unit by Lemma 1.27 of \cite{RSS}, and so for any $a : A$ we get a map of fiber sequences:
\[
\begin{tikzcd}[column sep = large]
B \arrow[d] \arrow[r, "(-)^{{\modal}}"] & {\modal} B \arrow[d]\\
A \times B \arrow[d, "\fst"'] \arrow[r, "(-)^{{\modal}}\times(-)^{{\modal}}"] & {\modal} A \times {\modal} B \arrow[d, "\fst"] \\
A \arrow[r, "(-)^{{\modal}}"'] & {\modal} A
\end{tikzcd}
\]
where the bottom square is a ${\modal}$-naturality square. The induced map $\gamma : \modal \fib_{\fst}(a) \to \fib_{{\modal} \fst}(a^{{\modal}})$ is therefore equal to the identity map of ${\modal} B$, and so is an equivalence.
\end{proof}

A map $f : X \to Y$ is equal to a projection $\fst : Y \times Z \to Y$ if and only if $\fib_f : Y \to \Type$ is constant, that is, if it factors through the point.
\[
\begin{tikzcd}
Y \arrow[d] \arrow[r, "\fib_f"] & \Type \\
\ast \arrow[ur, dashed, "Z"'] &
\end{tikzcd}
\]
We have just shown that such maps are ${\modal}$-fibrations, but we can do better. We can show that a map is a ${\modal}$-fibration if and only if it has \emph{${\modal}$-locally constant ${\modal}$-fibers} in the sense made precise in the upcoming Theorem \ref{thm:fibration.iff.locallyconstant.modal.fibers}. First, we prove a similar characterization of ${\modal}$-{\'e}tale maps. This is the \emph{modal descent} theorem of \cite{Rijke-Cherubini:Modal.Descent}.

\begin{lem}\label{lem:etale.iff.locallyconstant.modal}
Let $E : Y \to \Type_{{\modal}}$ be a family of modal types. Then $E$ factors through the modal unit of $Y$ if and only if $\fst : \dsum{y : Y} Ey \to Y$ is ${{\modal}}$-{\'e}tale. In particular, the type of such factorizations is a proposition.
\end{lem}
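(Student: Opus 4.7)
The plan is to prove the two directions separately and then deduce propositionality from the $\bang$-separatedness of $\Type_{\bang}$. The forward direction rests on pullback stability of $\bang$-connected maps together with the fact that a $\bang$-connected map into a modal type is a $\bang$-unit; the reverse direction just reads the factorization off the $\bang$-naturality square.

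For the forward direction, suppose $E \equiv \tilde{E} \circ (-)^{\bang}$ with $\tilde{E} : \bang Y \to \Type_{\bang}$. Then $\dsum{y : Y} Ey$ fits into the pullback square
\[
\begin{tikzcd}
\dsum{y : Y} \tilde{E}(y^{\bang}) \arrow[d, "\fst"'] \arrow[r, "g"] & \dsum{u : \bang Y} \tilde{E}(u) \arrow[d, "\fst"] \\
Y \arrow[r, "(-)^{\bang}"'] & \bang Y.
\end{tikzcd}
\]
The upper-right vertex is modal by Lemma \ref{lem:modality.facts}, and the bottom $\bang$-unit is $\bang$-connected, so the pulled-back top map $g$ is $\bang$-connected too. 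A $\bang$-connected map into a modal type is automatically a $\bang$-unit, so $g$ canonically identifies $\dsum{u : \bang Y} \tilde{E}(u)$ with $\bang\bigl(\dsum{y : Y} Ey\bigr)$ and the right vertical with $\bang \fst$. The square is therefore the $\bang$-naturality square of $\fst$, and being a pullback witnesses that $\fst$ is $\bang$-{\'e}tale.

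Conversely, if $\fst : \dsum{y : Y} Ey \to Y$ is $\bang$-{\'e}tale, set $\tilde{E}(u) :\equiv \fib_{\bang \fst}(u)$; this lies in $\Type_{\bang}$ since the domain and codomain of $\bang \fst$ are modal. The $\bang$-{\'e}taleness of $\fst$ supplies an equivalence $\delta : E(y) \simeq \tilde{E}(y^{\bang})$ for every $y : Y$, which by univalence and function extensionality assembles into the factorization $E = \tilde{E} \circ (-)^{\bang}$. For the propositionality clause, I would first check that $\Type_{\bang}$ is $\bang$-separated: for modal $A, B$, univalence turns $A = B$ into $A \simeq B$, a modal subtype of the modal function type $A \to B$ cut out by the modal predicate $\type{isEquiv}$. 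Hence for any $\tilde{E}_1, \tilde{E}_2 : \bang Y \to \Type_{\bang}$, each pointwise identity type is modal, so by the universal property of $\bang$ applied to the family $\lam{u} \tilde{E}_1(u) = \tilde{E}_2(u)$, precomposition by $(-)^{\bang}$ is an equivalence on identifications. This makes the precomposition functor $(\bang Y \to \Type_{\bang}) \to (Y \to \Type_{\bang})$ an embedding, and so its fiber over $E$ is a proposition.

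The step I expect to be the main obstacle is the identification of the pullback square with the naturality square in the forward direction; this rests on the characterization of $\bang$-units as $\bang$-connected maps into modal types, which is a standard consequence of the equivalence between the several presentations of a modality given in \cite{RSS}.
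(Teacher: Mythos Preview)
Your argument for the biconditional matches the paper's proof almost exactly: the paper also builds the pullback square over the unit, observes that the top map is $\bang$-connected (as the pullback of a connected map) into a modal type and hence a $\bang$-unit, and reads off {\'e}taleness from the naturality square; and in the other direction it too takes $\tilde{E} :\equiv \fib_{\bang\fst}$.

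Where you diverge is in the propositionality clause. The paper argues that any factorization $(\tilde{E}, w)$ produces, via the forward direction, a $\bang$-unit $\term{tot}(w)$ on $\dsum{y:Y}Ey$, and then invokes uniqueness of $\bang$-units to conclude that every factorization equals the canonical one $(\fib_{\bang\fst}, \gamma)$. Your route---showing that precomposition $(\bang Y \to \Type_{\bang}) \to (Y \to \Type_{\bang})$ is an embedding because $\Type_{\bang}$ is ``$\bang$-separated''---is morally correct, but there is a size wrinkle you should name: the identity type $\tilde{E}_1(u) = \tilde{E}_2(u)$ lives in the universe \emph{above} $\Type$, so you cannot literally apply the induction principle of $\bang$ to the family $u \mapsto (\tilde{E}_1(u) = \tilde{E}_2(u))$ without first extending $\bang$ to that larger universe. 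The fix is easy---replace the identity type by the equivalent small type $\tilde{E}_1(u) \simeq \tilde{E}_2(u)$, which does live in $\Type$ and is $\bang$-modal by the corollary you cite---but it should be made explicit. The paper's uniqueness-of-units argument sidesteps this entirely, which is presumably why the author chose it.
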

\begin{proof}
If $\fst$ is ${{\modal}}$-{\'e}tale, then $\gamma : Ey \to \fib_{{{\modal}}\fst}(y^{{\modal}})$ is an equivalence; therefore, $\fib_{{{\modal}}\fst} : {{\modal}}Y \to \Type_{{\modal}}$ is such a factorization.

On the other hand, suppose that $\tilde{E} : {{\modal}}Y \to \Type_{{\modal}}$ with $w : \dprod{y : Y} (Ey \simeq \tilde{E}y^{{\modal}})$ is a factorization. Then the square
\[
\begin{tikzcd}
\dsum{y : Y} Ey \arrow[d, "\fst"'] \arrow[r, "\term{tot}(w)"] & \dsum{u : {{\modal}}Y} \tilde{E}u \arrow[d, "\fst"] \\
Y \arrow[r] & {{\modal}}Y
\end{tikzcd}
\]
is a pullback. Since the unit $Y \to {\modal} Y$ is ${\modal}$-connected and
${\modal}$-connected maps are closed under pullback, $\term{tot}(w)$ is
${\modal}$-connected. As $\dsum{u : {\modal} Y} \tilde{E}u$ is a sum of modal types
over a modal type, it is modal, and therefore $\term{tot}(w)$ is a ${\modal}$-unit
and this square is a ${\modal}$-naturality square. But then $\fst : \dsum{y : Y} Ey
\to Y$ is ${\modal}$-{\'e}tale since its ${\modal}$-naturality square is a pullback.

To show that the type of such factorizations is a proposition, we just need to
show that any factorization equals $(\fib_{{\modal}\fst},\, \gamma)$. This follows
immediately from the uniqueness of ${\modal}$-units.
\end{proof}

As a corollary, we can characterize the ${\modal}$-{\'e}tale maps into a type $Y$.
\begin{cor}\label{cor:characterizing.etale.maps}
  For any type $Y$, the type $$\mbox{{\'E}t}_{{\modal}}(Y) :\equiv \dsum{X :
    \Type}\dsum{f : X \to Y}\mbox{is${\modal}${\'e}tale}(f)$$
  is equivalent to the type ${\modal} Y \to \Type_{ {\modal} }$ of families of modal types
  varying over ${\modal} Y$.
\end{cor}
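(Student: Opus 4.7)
The plan is to combine the standard type-family/map equivalence with Lemma \ref{lem:etale.iff.locallyconstant.modal}. Recall that sending a type family $E : Y \to \Type$ to $(\sum_{y:Y} Ey,\ \fst)$ is a (univalence-based) equivalence
\[ (Y \to \Type) \simeq \dsum{X : \Type}(X \to Y), \]
with inverse $(X, f) \mapsto \fib_f$. I would use this equivalence to translate $\mbox{\'Et}_{\bang}(Y)$ into a subtype of $Y \to \Type$, and then apply Lemma \ref{lem:etale.iff.locallyconstant.modal} to reidentify that subtype with $\bang Y \to \Type_{\bang}$.

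First I would argue that being $\bang$-{\'e}tale is a condition one can check on $E$ directly: if $f : X \to Y$ corresponds to $E :\equiv \fib_f$, then $f$ is $\bang$-{\'e}tale iff $\fst : \dsum{y : Y} Ey \to Y$ is. Moreover, if this $\fst$ is $\bang$-{\'e}tale then $\delta : Ey \to \fib_{\bang \fst}(y^{\bang})$ is an equivalence, and the codomain is $\bang$-modal, so each $Ey$ is automatically $\bang$-modal. Hence under the equivalence above, the subtype $\mbox{\'Et}_{\bang}(Y)$ corresponds to
\[ \dsum{E : Y \to \Type_{\bang}} \ \mbox{is}\bang\mbox{\'etale}(\fst). \]

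Now Lemma \ref{lem:etale.iff.locallyconstant.modal} says exactly that for $E : Y \to \Type_{\bang}$, the map $\fst$ is $\bang$-{\'e}tale if and only if $E$ factors through $(-)^{\bang} : Y \to \bang Y$, and moreover the type of such factorizations is a proposition. So the displayed type above is equivalent to
\[ \dsum{E : Y \to \Type_{\bang}} \dsum{\tilde E : \bang Y \to \Type_{\bang}} E = \tilde E \circ (-)^{\bang}, \]
which, by contracting the first two components (using that for each $\tilde E$ the pair $(\tilde E \circ (-)^{\bang},\ \tilde E,\ \refl)$ gives a contraction of the based path space), is equivalent to $\bang Y \to \Type_{\bang}$. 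Explicitly the equivalence sends $\tilde E : \bang Y \to \Type_{\bang}$ to the étale map $\fst : \dsum{y : Y} \tilde E(y^{\bang}) \to Y$.

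The main obstacle is essentially bookkeeping: carefully exploiting the propositional-uniqueness half of Lemma \ref{lem:etale.iff.locallyconstant.modal} so that the passage from "$E$ factors through $(-)^{\bang}$" to "$\tilde E : \bang Y \to \Type_{\bang}$" is an equivalence of types rather than just a logical equivalence. Once that is in hand, the rest is a composition of standard equivalences (the map/family correspondence and contracting a singleton), and one should also check, via Lemma \ref{lem:modality.facts} applied to $\type{is}\bang\type{\'etale}$, that being $\bang$-{\'e}tale is a proposition so that the displayed subtype is well-defined.
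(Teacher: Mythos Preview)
Your proposal is correct and follows essentially the same route as the paper: both use the map/type-family correspondence, invoke Lemma~\ref{lem:etale.iff.locallyconstant.modal} to replace the {\'e}tale condition by the (propositionally unique) factorization data $\dsum{\tilde E : \bang Y \to \Type_{\bang}} E = \tilde E \circ (-)^{\bang}$, and then contract the resulting singleton. The only cosmetic difference is the order in which you apply the two equivalences; one small quibble is that the propositionality of $\mbox{is}\bang\mbox{{\'e}tale}$ comes from $\type{isEquiv}$ being a proposition rather than from Lemma~\ref{lem:modality.facts}.
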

\begin{proof}
  Consider the following equivalence:
  \begin{align*}
    \mbox{{\'E}t}_{{\modal}}(Y) &:\equiv \dsum{X : \Type}\dsum{f : X \to Y}\mbox{is${\modal}${\'e}tale}(f) \\
                             &\simeq \dsum{X : \Type}  \dsum{f : X \to Y} \dsum{\tilde{E} : {\modal} Y \to \Type_{{\modal}}} \fib_f = \tilde{E} \circ (-)^{{\modal}} \\
    &\simeq \dsum{E : Y \to \Type_{{\modal}}} \dsum{\tilde{E} : {\modal} Y \to \Type_{{\modal}}}  (E = \tilde{E} \circ (-)^{{\modal}}) \\
 &\simeq {\modal} Y \to \Type_{{\modal}} \qedhere
  \end{align*}
\end{proof}

We may now prove the main theorem of this section, characterizing $\modal$-fibrations as those maps with $\modal$-locally constant $\modal$-fibers.
\begin{thm}\label{thm:fibration.iff.locallyconstant.modal.fibers}
Let $E : Y \to \Type$ be a family of types. Then $\fst : \dsum{y : Y} Ey \to Y$ is a ${{\modal}}$-fibration if and only if there is a type family $\tilde{E} : {{\modal}}Y \to \Type_{{\modal}}$ making the following square commute:
\[
\begin{tikzcd}
Y \arrow[d] \arrow[r, "E"] & \Type \arrow[d, "{\modal}"] \\
{{\modal}}Y \arrow[r, dashed, "\tilde{E}"'] & \Type_{{\modal}}
\end{tikzcd}
\]
\end{thm}
\begin{rmk}
In the case of the $\shape$ modality, Theorem \ref{thm:fibration.iff.locallyconstant.modal.fibers} can be understood as characterizing the $\shape$-fibrations as those maps whose fibers form a \emph{local system} on their codomain. The factorization $\tilde{E} : \shape Y \to \Type_{{\shape}}$ of $\shape E : Y \to \Type_{\shape}$ shows that the homotopy types of the fibers $Ey$ are \emph{locally constant} in $y$. Moreover, the usual transport of identifications in $\shape Y$ gives rise to a \emph{monodromy} action of the homotopy type $\shape Y$ on the homotopy types $\shape E y$ of the fibers $E y$.
  \end{rmk}
\begin{proof}
By Lemma \ref{lem:fibration.iff.factor.etale}, $\fst$ is a fibration if and only if its modal factor $\Ra(\fst) : \dsum{y : Y}{{{\modal}}(Ey)} \to Y$ is ${{\modal}}$-{\'etale}. By Lemma \ref{lem:etale.iff.locallyconstant.modal}, $\Ra(\fst)$ is ${{\modal}}$-{\'etale} if and only if ${{\modal}}E : Y \to \Type_{{\modal}}$ factors through ${{\modal}}Y$. But this is exactly what we are asking for!
\end{proof}

What is a $\trunc{-}_n$-fibration? A map is a $\trunc{-}_n$-equivalence exactly
when it induces an equivalences on the homotopy groups $\pi_k$ for $0 \leq k
\leq n$ (see Theorem 8.8.3 of \cite{HoTTBook}), and is $\trunc{-}_n$-connected when it furthermore induces a surjection
on $\pi_{n+1}$ (see Corollary 8.8.6 of \cite{HoTTBook}). Since a map is a $\trunc{-}_n$-fibration if and only if its
$\trunc{-}_n$-equivalence factor is $\trunc{-}_n$-connected, we might expect
that a map is a $\trunc{-}_n$-fibration if it induces a surjection on
$\pi_{n+1}$. We can prove this naive conjecture by giving one more equivalent
characterization of ${\modal}$-fibrations --- this time with a small caveat.

We first need an elementary lemma concerning fibers.
\begin{lem}\label{lem:3x3fiber.lemma}
  Consider a square
  \[
\begin{tikzcd}
	A & B \\
	C & D
	\arrow["f"', from=1-1, to=2-1]
	\arrow["g", from=1-1, to=1-2]
	\arrow["h", from=1-2, to=2-2]
	\arrow["k"', from=2-1, to=2-2]
\end{tikzcd}
  \]
  commuting via $S : \dprod{x : A} (k(f(x)) = h(g(x)))$. Let $a : A$, and define $F : \fib_{g}(g a) \to \fib_{k}(k f a)$ by
  \[
F(x : A,\, p : gx = g a) :\equiv (f x, S(x) \cdot h_{\ast}p \cdot S(a)\inv).
  \]
  For $(c, q) : \fib_{k}(k f a)$, define $G : \fib_{f}(c) \to \fib_{h}(k f a)$ by
  \[
G(x : A, w : f x = c) :\equiv (g x, S(x)\inv \cdot k_{\ast} w \cdot q).
  \]
  Then we have an equivalence $\fib_{F}(c, q) = \fib_{G}(g a, S(a)\inv)$ giving a (judgementally) commuting square
  \[
\begin{tikzcd}
	{\fib_F(c, q)\,\,(= \fib_G(ga, S(a)\inv)} & {\fib_f(c)} \\
	{\fib_{g}(g a)} & A
	\arrow[from=1-1, to=2-1]
	\arrow[from=2-1, to=2-2]
	\arrow[from=1-1, to=1-2]
	\arrow[from=1-2, to=2-2]
\end{tikzcd}
  \]
  \end{lem}
  \begin{proof}
We find the equivalence as the following composite:
\begin{align*}
  \fib_{F}(c, q) &:\equiv \dsum{(x, p) : \fib_{g}(g a)} (F(x, p) = (c, q)) \\
&= \dsum{x : A} \dsum{p : gx = ga} ((f x, S(x) \cdot h_{\ast}p \cdot S(a)\inv) = (c, q)) \\
  &= \dsum{x : A} \dsum{p : gx = ga} \dsum{w : fx = c} (k_{\ast}w\inv \cdot S(x) \cdot h_{\ast}p \cdot S(a)\inv = q)\\
                 &= \dsum{x : A} \dsum{w : fx = c} \dsum{p : g x = g a} (h_{\ast}p\inv \cdot S(x) \cdot k_{\ast}w \cdot q = S(a)\inv)\\
                   &= \dsum{x : A} \dsum{w : fx = c} (G(x, w) = (ga, S(a)\inv))\\
                     &= \fib_{G}(ga, S(a)\inv).
\end{align*}
Note that throughout this equivalence, $x : A$ is not affected by the equivalences. Therefore, we end up with the judgementally commuting square as desired.
    \end{proof}

\begin{thm}\label{lem:fibration.iff.map.on.fibers.connected}
Let $f : X \to Y$.
\begin{enumerate}
        \item If $f$ is a $\modal$-fibration, then for all $x : X$ the induced map $\fib_{(-)^{\modal}}(x^{\modal}) \to \fib_{(-)^{\modal}}((fx)^{\modal})$ is $\modal$-connected.
        \item If the modal unit $(-)^{\modal} : X \to \modal X$ is surjective, and for all $x : X$ the induced map $\fib_{(-)^{\modal}}(x^{\modal}) \to \fib_{(-)^{\modal}}((fx)^{\modal})$ is $\modal$-connected, then $f$ is a $\modal$-fibration.
\end{enumerate}
\end{thm}
\begin{proof}
  First, suppose that $f$ a $\modal$-fibration, and let $x : X$ seeking to show that the induced map $\fib_{(-)^{\modal}}(x^{\modal}) \to \fib_{(-)^{\modal}}((fx)^{\modal})$ is $\modal$-connected. By Lemma \ref{lem:3x3fiber.lemma}, the fiber of the induced map over $(y, p) : \fib_{(-)^{\modal}}((fx)^{\modal})$ is equivalent to the fiber of $\delta : \fib_{f}(y) \to \fib_{\modal f}(y^{\modal})$ over $(x^{\modal},  S(x)\inv)$ where $S : \dprod{x : X} (f x)^{\modal} = \modal f(x^{\modal})$ is witness to the commutativity of the naturality square. Since $f$ is a $\modal$-fibration, this $\delta$ is a $\modal$-equivalence; but it is a $\modal$-equivalence landing in a modal type, and is therefore a $\modal$-unit, which is to say it is $\modal$-connected.

Conversely, suppose that the modal unit $(-)^{\modal} : X \to \modal X$ is surjective. We aim to show that $f : X \to Y$ is a $\modal$-fibration, so it suffices to prove that the maps $\delta : \fib_{f}(y) \to \fib_{\modal f} (y^{\modal})$ are $\modal$-connected for all $y : Y$. So, suppose we have $(u, p) : \fib_{\modal f}(y^{\modal})$, seeking to show that $\fib_{\delta}(u, p)$ is $\modal$-connected. By the surjectivity of $(-)^{\modal} : X \to \modal X$, we may assume $u$ is of the form $x^{\modal}$. Then Lemma \ref{lem:3x3fiber.lemma} tells us that $\fib_{\delta}(x^{\modal}, p)$ is equivalent to the fiber of the induced map $\fib_{(-)^{\modal}}(x^{\modal}) \to \fib_{(-)^{\modal}}((fx)^{\modal})$ over $(fx, S(x))$. But by hypothesis, this fiber was $\modal$-connected.
  \end{proof}

  \begin{rmk}
The condition that $(-)^{\modal} : X \to \modal X$ be surjective is often trivially satisfied. For many modalities --- the $n$-truncation modalities and the shape modality included --- \emph{all} modal units are surjective. In this case, Theorem \ref{lem:fibration.iff.map.on.fibers.connected} characterizes the $\modal$-fibrations with no caveats. We might refer to modalities whose units are surjective as \emph{global} modalities; they are counterposed to topological modalities, which are given by a nullification at a family of propositions, since any global topological modality is trivial. More specifically, any global modality is cotopological in the sense of Theorem 3.22 of \cite{RSS}.
    \end{rmk}

\begin{cor}\label{cor:truncation.fibrations}
  A map $f : X \to Y$ is a $\trunc{-}_n$-fibration if and only if for all $y :
  Y$ and $(x, p) : \fib_f(y)$, the induced map $\pi_{n+1}(X, x) \to \pi_{n+1}(Y,
  y)$ is surjective.
\end{cor}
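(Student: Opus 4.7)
The plan is to specialize Lemma \ref{lem:fibration.iff.map.on.fibers.connected} to the modality $\bang = \trunc{-}_n$ and translate the resulting $\trunc{-}_n$-connectedness condition into a statement about homotopy groups.

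First, I would identify the fibers of the truncation unit. For a pointed type $(X, x)$, the long exact sequence applied to the fiber sequence
$$\fib_{(-)_n}(x_n) \to X \xrightarrow{(-)_n} \trunc{X}_n,$$
together with the standard facts $\pi_k(\trunc{X}_n) \simeq \pi_k(X, x)$ for $k \leq n$ and $\pi_k(\trunc{X}_n) = 0$ for $k > n$, yields $\pi_k(\fib_{(-)_n}(x_n)) = 0$ for $k \leq n$ and $\pi_k(\fib_{(-)_n}(x_n)) \simeq \pi_k(X, x)$ for $k \geq n+1$, where the latter isomorphism is induced by the inclusion of the fiber into $X$. In other words, $\fib_{(-)_n}(x_n)$ is the $n$-connected cover of $X$ at $x$. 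Naturality in $f$ of this identification ensures that, on $\pi_{n+1}$, the induced map $\alpha : \fib_{(-)_n}(x_n) \to \fib_{(-)_n}(y_n)$ coincides with $\pi_{n+1}(X, x) \to \pi_{n+1}(Y, y)$ induced by $f$.

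Next, I would use the characterization recalled just before the corollary: a map is $\trunc{-}_n$-connected if and only if it induces equivalences on $\pi_k$ for $0 \leq k \leq n$ and a surjection on $\pi_{n+1}$. For our $\alpha$, the $\pi_k$-conditions with $k \leq n$ hold automatically because both source and target have $\pi_k = 0$ in that range. Hence $\alpha$ is $\trunc{-}_n$-connected if and only if $\pi_{n+1}(X, x) \to \pi_{n+1}(Y, y)$ is surjective.

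Plugging this equivalence into Lemma \ref{lem:fibration.iff.map.on.fibers.connected} completes the proof: the surjection $f$ is a $\trunc{-}_n$-fibration iff, for every $y : Y$ and every $(x, p) : \fib_f(y)$, the map $\pi_{n+1}(X, x) \to \pi_{n+1}(Y, y)$ is surjective. The only step requiring care is the identification of $\alpha$ with $f_\ast$ on $\pi_{n+1}$; this is a diagram chase through the $3 \times 3$ array of fiber sequences appearing in the proof of Lemma \ref{lem:fibration.iff.map.on.fibers.connected}, and it is bookkeeping rather than a substantive obstacle.
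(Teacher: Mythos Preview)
Your proposal is correct and follows essentially the same route as the paper: apply Lemma \ref{lem:fibration.iff.map.on.fibers.connected}, observe that the fibers of the $\trunc{-}_n$-unit are $\trunc{-}_n$-connected so that the connectedness condition on the induced map reduces to surjectivity on $\pi_{n+1}$, and identify that $\pi_{n+1}$-map with the one induced by $f$. The only cosmetic difference is that the paper invokes the general fact that fibers of modal units are modally connected, whereas you rederive this via the long exact sequence; both arrive at the same reduction.
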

\begin{proof}
  By Theorem \ref{lem:fibration.iff.map.on.fibers.connected}, $f$ is a
  $\trunc{-}_n$-fibration if and only if the induced map $\fib_{|-|_n}(x) \to
  \fib_{|-|_n}(y)$ is $\trunc{-}_n$-connected. As the fibers of $\trunc{-}_n$-units, $\fib_{|-|_n}(x)$ and
  $\fib_{|-|_n}(y)$ are $\trunc{-}_n$-connected, so the induced map is
  $\trunc{-}_{n}$-connected if and only if the induced map $\pi_{n+1}(\fib_{|-|_n}(x), (x,
  \refl)) \to \pi_{n+1}(\fib_{|-|_n}(y), (y, \refl))$ is a surjection. But this
  map is equivalent to the induced map $\pi_{n+1}(X, x) \to \pi_{n+1}(Y, y)$.
\end{proof}

Before moving on, let's briefly consider a pair of modalities ${\modal} \leq\, {\modaltwo}$,
where every ${\modal}$-modal type is ${\modaltwo}$-modal. For example, $\trunc{-}_n \leq
\trunc{-}_{n+1}$. In particular, ${\modal} X$ is
${\modaltwo}$-modal, and so the unit $(-)^{{\modal}} : X \to {\modal} X$ factors uniquely
through $(-)^{{\modaltwo}} : X \to {\modaltwo} X$, giving us a commuting diagram:
\[
  \begin{tikzcd}
    X \arrow[r,"(-)^{{\modaltwo}}" ] \arrow[dr,"(-)^{{\modal}}"'] & {\modaltwo} X \arrow[d, dashed,"c" ] \\
     & {\modal} X
  \end{tikzcd}
\]
\begin{lem}\label{lem:submodality.connecting.unit}
Suppose that every ${\modal}$-modal type is ${\modaltwo}$-modal. Then the connecting map $c :
{\modaltwo} X \to {\modal} X$ is a ${\modal}$-unit. As a corollary, for any $f : X \to Y$, we get a ${\modal}$-naturality square
\[
  \begin{tikzcd}
    {\modaltwo} X \arrow[d,"{\modaltwo} f"'] \arrow[r] & {\modal} X \arrow[d,"{\modal} f"] \\
    {\modaltwo} Y \arrow[r] & {\modal} Y
  \end{tikzcd}
\]
\end{lem}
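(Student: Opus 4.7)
The plan is to verify that $c : ?X \to \bang X$ satisfies the universal property of a $\bang$-unit, namely that $\bang X$ is $\bang$-modal (which is immediate) and that precomposition by $c$ induces an equivalence $(\bang X \to Z) \xrightarrow{\sim} (?X \to Z)$ for every $\bang$-modal type $Z$. From this, the map $c$ and the canonical unit $(-)^{\bang} : ?X \to \bang(?X)$ would be identified, so $c$ deserves to be called a unit.

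To check the universal property, I would fix a $\bang$-modal type $Z$ and use the hypothesis $\bang \leq\, ?$ to conclude that $Z$ is also $?$-modal. Then, since $(-)^{?} : X \to ?X$ is a $?$-unit and $(-)^{\bang} : X \to \bang X$ is a $\bang$-unit, both
\[ (?X \to Z) \xrightarrow{- \circ (-)^{?}} (X \to Z) \qquad \text{and} \qquad (\bang X \to Z) \xrightarrow{- \circ (-)^{\bang}} (X \to Z) \]
are equivalences. By construction, $c$ is the unique map making $c \circ (-)^{?} = (-)^{\bang}$, so the triangle obtained by precomposing with $c$ on top commutes. Two of its edges being equivalences forces the third, precomposition by $c$, to be an equivalence as well, as desired.

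For the corollary, both $c_X : ?X \to \bang X$ and $c_Y : ?Y \to \bang Y$ are $\bang$-units by what we just proved, so it suffices to produce a map $\bang X \to \bang Y$ making the square commute; this then \emph{is} a $\bang$-naturality square in the sense of the paper. The composite $c_Y \circ\, ?f : ?X \to \bang Y$ lands in a $\bang$-modal type, hence factors uniquely through $c_X$ by the universal property just established, yielding a map $\bang X \to \bang Y$. To identify this map with $\bang f$, I would compare the two composites $\bang f \circ c_X$ and $c_Y \circ\, ?f$ on elements of the form $x^{?}$: by the computation rules of the two modalities, both send $x^{?}$ to $f(x)^{\bang}$, so they agree there, and agreement on all of $?X$ follows from the universal property (equivalently, by $?$-induction, using that $\bang Y$ is $?$-modal). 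There is no real obstacle — the argument is entirely an exercise in chaining the two universal properties together — but one must be careful to invoke the hypothesis $\bang \leq\, ?$ at the single place where it is used, namely to ensure $Z$ (and likewise $\bang Y$) is $?$-modal.
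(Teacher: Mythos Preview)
Your proof is correct and follows essentially the same approach as the paper: verify the universal property of $c$ by precomposing the triangle $c \circ (-)^{?} = (-)^{\bang}$ into a $\bang$-modal (hence $?$-modal) target $Z$, and conclude by the two-out-of-three property for equivalences. The paper leaves the corollary implicit, so your additional verification that the induced map agrees with $\bang f$ is extra detail rather than a different argument.
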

\begin{proof}
Let $Z$ be a ${\modal}$-modal type. It is therefore also ${\modaltwo}$-modal. Precomposing by
the above commutative triangle gives us a commutative diagram:
\[
  \begin{tikzcd}
    ( X \to Z ) \arrow[r, leftarrow,"\sim" ] \arrow[dr,leftarrow, "\sim"' ] & ( {\modaltwo} X \to Z ) \arrow[d, leftarrow] \\
     & (  {\modal}  X \to Z )
  \end{tikzcd}
\]
Because $Z$ is both ${\modal}$-modal and ${\modaltwo}$-modal, the two horizontal maps are
equivalences, and therefore the vertical map is an equivalence, as desired.
\end{proof}

We
aim to demonstrate the following relations between the different kinds of maps
associated to these modalities.
\begin{thm}\label{thm:comparing.modalities}
  Suppose that every ${\modal}$-modal type is ${\modaltwo}$-modal, and that $f : X \to Y$.
  Then:
\begin{enumerate}
\item If $f$ is ${\modal}$-modal, then it is ${\modaltwo}$-modal.
\item If $f$ is ${\modal}$-{\'e}tale, then it is ${\modaltwo}$-{\'e}tale.
\item If $f$ is a ${\modaltwo}$-equivalence, then it is a ${\modal}$-equivalence.
\item If $f$ is ${\modaltwo}$-connected, then it is ${\modal}$-connected.
  \item If $f$ is a ${\modaltwo}$-fibration and ${\modaltwo} f$ is a ${\modal}$-fibration, then $f$ is a ${\modal}$-fibration.
\end{enumerate}
\end{thm}
\begin{proof}[Proof of Theorem \ref{thm:comparing.modalities}]
  $\quad$

  \begin{enumerate}
    \item If $f$ is ${\modal}$-modal, then its fibers are ${\modal}$-modal and so by
      hypothesis ${\modaltwo}$-modal, so that $f$ is ${\modaltwo}$-modal.
    \item If $f$ is ${\modal}$-{{\'e}tale}, then by Lemma
      \ref{lem:etale.iff.locallyconstant.modal}, $\fib_f$ factors through ${\modal}
      X$ as $E : {\modal} X \to \Type$. But then $E \circ c : {\modaltwo} X \to \Type$ is a
      factorization of $\fib_f$ through ${\modaltwo} X$, so that $f$ is ${\modaltwo}$-{{\'e}tale}.
    \item If $f$ is a ${\modaltwo}$-equivalence, then ${\modaltwo} f$ is an equivalence. But then
      since ${\modal} {\modaltwo} f$ is equivalent to ${\modal} f$ by Lemma
      \ref{lem:submodality.connecting.unit}, ${\modal} f$ is an equivalence.
    \item If $f$ is ${\modaltwo}$-connected, then ${\modaltwo}\fib_f(y)$ is contractible for all $y
      : Y$. But then ${\modal} \fib_f(y) = {\modal} {\modaltwo}\fib_f(y)$ is contractible for
      all $y : Y$, so $f$ is ${\modal}$-connected.
    \item Consider the following diagram.
      \[
        \begin{tikzcd}
          Y \arrow[r,"\fib_f" ] \arrow[d] & \Type \arrow[d,"{\modaltwo}" ]\\
          {\modaltwo} Y \arrow[r,"\fib_{{\modaltwo} f}"] \arrow[d] & \Type_{{\modaltwo}} \arrow[d,"{\modal}" ]\\
          {\modal} Y \arrow[r,"\fib_{{\modal} {\modaltwo} f}"] & \Type_{{\modal}}
        \end{tikzcd}
      \]
      If $f$ is a ${\modaltwo}$-fibration then the upper square commutes, and if ${\modaltwo}f$ is a
      ${\modal}$-fibration then the lower square commutes. If the outer square
      commutes, then $\fib_f$ factors through ${\modal} Y$, and so is a
      ${\modal}$-fibration. \qedhere
  \end{enumerate}
\end{proof}

\section{A Brief Review of Cohesive HoTT}\label{sec:cohesion.review}
In this section, we review Mike Shulman's Real Cohesive Homotopy Type Theory (as
found in \cite{RealCohesion}). The \emph{shape} modality $\shape$ which sends a
type to its homotopy type is defined in the context of Real Cohesive HoTT. It is
the interplay of this modality with the \emph{comodality} $\flat$ that defines
real cohesion, and that we will exploit to give a trick for showing that a map
is a $\shape$-fibration.

For the reader who isn't too familiar with real cohesion and doesn't feel like
getting too familiar with it, worry not. The details in this section revolve
around the notion of \emph{crisp} objects, which will be explained below. But
every object (type or element) which appears in the empty context --- that is to
say, with no free variables in its definition --- is crisp. Therefore, if you
need a heuristic for understanding what it means to, say, have a crisp type $Z
:: \Type$, just imagine that this means that $Z$ has no free variables in its
definition. For example, $\Nb$, $\Zb$, $\Rb$, and $\Type$ are all crisp types,
while $0 : \Nb$, $\pi : \Rb$, and $\lam{x} x^2 + 2 : \Rb \to \Rb$ are all crisp
elements since they have no free variables. Furthermore, any natural number may
be assumed to be crisp, so that types like $\Rb^n$ may be taken as crisp even though they
involve a free variable $n : \Nb$.

In type theory, if you can argue that for all $x : X$, there is an $f(x) : Y$, then you have given a function $f : X \to Y$ in the process. In Shulman's Real Cohesive HoTT, all functions will be \emph{continuous} in a topological sense. So, saying that for $x : X$ we have a $f(x) : Y$ means that $f(x)$ must depend \emph{continuously} on $x$. But not all dependencies are continuous. What if we want to express a discontinuous dependence?

To address this concern, Shulman introduces the notion of a ``crisp variable''
$$a :: A$$
to express a discontinuous dependence. Hypothesizing $a :: A$ means that we can use $a$ in a discontinuous manner; one way this is realized is in the crisp Law of Excluded middle.

\begin{axiom}[Crisp excluded middle]
For any crisp $P :: \Prop$, we have $P \vee \neg P$.
\end{axiom}

This axiom lets us use case analysis when assuming a crisp element of a set, even if the set has a native topology that wouldn't admit case analysis constructively (such as the Dedekind real numbers $\Rb$, which cannot constructively be separated into two disjoint parts).

Any variable appearing in the type of a crisp variable must also be crisp, and a
crisp variable may only be substituted by expressions that \emph{only} involve
crisp variables. When all the variables in an expression are crisp, we say that
that expression is crisp; so, we may only substitute crisp expressions in for
crisp variables. Constants --- like $0 : \Nb$ or $\Nb : \Type$ --- appearing in an empty context are therefore always crisp. This means that one cannot give a closed form example of a term which is \emph{not} crisp; all terms with no free variables are crisp. For emphasis, we will say that a term which is not crisp is \emph{cohesive}. The rules for crisp type theory can be found in Section 2 of \cite{RealCohesion}.

One way to think of the difference between a cohesive dependence --- for all $x :
X$, $f(x) : Y$ --- and a crisp dependence --- for all $x :: X$, $f(x) : Y$ --- is
that the former expresses that $f(x)$ depends on a generic $x : X$, whereas in
the latter we are saying that for \emph{each} individual $x$, there is an
$f(x)$.\footnote{In particular, by the crisp excluded middle axiom, we may deal
  with each $x :: X$ on a case by case basis.}

Given a \emph{crisp} type $X$, we can remove its spatial structure to get a
type $\flat X$. If $X$ is a set, $\flat X$ can be thought of as its set
of points.\footnote{ This intuition really only works for sets, since if $G$ is
  a group then $\flat \BB G$ behaves like the moduli stack of principal $G$-bundles with flat connection, and not ``the type of points of $\BB G$''. } The rules for $\flat$ can be found in Section 4 of \cite{RealCohesion}. They may be summed up by saying that $\flat X$ is inductively generated by elements of the form $x^{\flat}$ for \emph{crisp} $x :: X$. In particular, whenever we have a type family $C : \flat X \to \Type$, an $x : \flat X$, and an element $f(u) : C(u^{\flat})$ depending on a \emph{crisp} $u :: X$, we get an element
$$( \mbox{let $u^{\flat} := x$ in $f(u)$} ) : C(x)$$
and if $x \equiv v^{\flat}$, then $(\mbox{let $u^{\flat} := x$ in $f(u)$})
\equiv f(v)$. This allows us to think of $\flat X$ as ``the type of \emph{crisp}
points of $X$''.

We have an inclusion $(-)_{\flat} : \flat X \to X$ given by $x_{\flat} :\equiv
\mbox{let $u^{\flat} := x$ in $u$}$. Since we are thinking of a dependence on a
crisp variable as a \emph{discontinuous} dependence, if this map $(-)_{\flat} :
\flat X \to X$
is an equivalence then every \emph{discontinuous} dependence on $x :: X$
underlies a \emph{continuous} dependence on $x$. This leads us to the following defintion:
\begin{defn}
A crisp type $X :: \Type$ is \emph{crisply discrete} if the counit $(-)_{\flat} : \flat X \to X$ is an equivalence.\footnote{See Remark 6.13 of \cite{RealCohesion} for a discussion on some of the subtleties in the notion of crisp discreteness.}
\end{defn}

We would like our formal notion of continuity coming from
crisp types to match our topological notion of continuity as measured by
continuous paths. We have a notion of discreteness coming from crisp variables
--- \emph{crisply discrete} --- but we also need a topological notion of
discreteness.
\begin{defn}\label{defn:discrete.real.cohesion}
A type $X$ is \emph{discrete} if every path in it is constant in the sense that
the inclusion of constant paths $X \to (\Rb \to X)$ is an equivalence.
\end{defn}

\begin{rmk}
The real numbers $\Rb$ in Definition \ref{defn:discrete.real.cohesion} --- and throughout this paper --- are the \emph{Dedekind} real numbers and not the Cauchy real numbers. It can be proven in real cohesion (with a form of the axiom of choice) that the Cauchy real numbers are discrete, and that indeed they are equivalent to $\flat \Rb$ --- see Corollary 8.28 of \cite{RealCohesion}.
  \end{rmk}

Note that we can form the proposition ``is discrete'' for any
type, while we can only form the proposition ``is crisply discrete'' for crisp
types, since to form $\flat X$, $X$ must be crisp. The main axiom of real cohesion, which ties the liminal sort of topology implied by the use of crisp variables to the concrete topology of the real numbers, is that for crisp types being discrete and being crisply discrete coincide.
\begin{axiom}[$\Rb\flat$]
  A crisp type $X :: \Type$ is crisply discrete if and only if it is discrete.
\end{axiom}

We can now define the shape modality as a localization.
\begin{defn}
The \emph{shape} or \emph{homotopy type} $\shape X$ of a type $X$ is defined to be
the localization of $X$ at the type of Dedekind real numbers $\Rb$ (see
Definition 9.6 of \cite{RealCohesion}). By construction, a type is $\shape$-modal
if and only if it is discrete.
\end{defn}

Since $\shape$ is given by localization at a small type,\footnote{Assuming
  propositional resizing, $\Rb$ is as small as $\Nb$; without propositional
  resizing, $\Rb$ has the size of the universe of $\Nb$. We will assume
  propositional resizing here, as is common in homotopy type theory and valid in
any $\infty$-topos.} it is accessible in the sense of \cite{RSS}. Therefore, by Lemma 2.24 of \cite{RSS}, it may be extended canonically to any larger universe. For this reason, and because $\flat$ is universe polymorphic, we will elide the size issues in the use of $\shape$ and, for example, consider the type of discrete types $\Type_{\shape}$ to be $\shape$-separated.

In the upcoming sections, we will need not only the shape modality $\shape$, but
the \emph{$n$-truncated} shape modality $\shape_n$.
\begin{defn}
  Let $\shape_n$ be the modality whose modal types are discrete, $n$-truncated
  types. It can be constructed by localizing at the real line $\Rb$ and the
  homotopy $n$-sphere $S^n$.
\end{defn}

It may be tempting to define $\shape_n X$ as $\trunc{\shape X}_n$, but it is not
currently known whether $\trunc{D}_n$ of a discrete type $D$ is discrete; the
author suspects that it is not true in general. However, for crisp types, this
is true.
\begin{prop}\label{prop:truncation.shape}
  Let $X :: \Type$ be a crisp type. Then $\shape_n X = \trunc{\shape X}_n$.
\end{prop}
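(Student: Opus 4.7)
The plan is to verify that $\trunc{\esh X}_n$ satisfies the universal property of $\esh_n X$ as the reflection of $X$ into the subuniverse of discrete, $n$-truncated types. Composing the shape unit with the $n$-truncation unit gives a canonical candidate map $X \to \trunc{\esh X}_n$, so the essential content is to show $\trunc{\esh X}_n$ is itself $\esh_n$-modal, i.e.\ both $n$-truncated and discrete. The $n$-truncation is immediate; all of the work lies in the discreteness.

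For discreteness, I would exploit crispness throughout. Since $X$ is crisp and both $\esh$ and $\trunc{-}_n$ are closed-form operators on $\Type$, both $\esh X$ and $\trunc{\esh X}_n$ are crisp. By the $\Rb\flat$ axiom, a crisp type is discrete if and only if it is crisply discrete, so it suffices to show that the counit $(-)_\flat : \flat \trunc{\esh X}_n \to \trunc{\esh X}_n$ is an equivalence. Since $\esh X$ is discrete by construction and crisp, $\Rb\flat$ gives $\flat \esh X \simeq \esh X$. The crux is to commute $\flat$ past $\trunc{-}_n$: using that $\flat$ has a right adjoint $\sharp$ in Real Cohesion and therefore preserves colimits of crisp diagrams, and that the $n$-truncation is built as such a higher inductive colimit, one obtains $\flat \trunc{\esh X}_n \simeq \trunc{\flat \esh X}_n \simeq \trunc{\esh X}_n$.

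Once $\trunc{\esh X}_n$ is known to be $\esh_n$-modal, the universal property is a routine composition of the universal properties of $\esh$ and $\trunc{-}_n$: for any discrete, $n$-truncated $Z$, maps $X \to Z$ factor uniquely through $\esh X$ (since $Z$ is $\esh$-modal) and then through $\trunc{\esh X}_n$ (since $Z$ is $n$-truncated), giving an equivalence $(\trunc{\esh X}_n \to Z) \simeq (X \to Z)$ natural in $Z$. This identifies $\trunc{\esh X}_n$ as the $\esh_n$-reflection of $X$, and univalence yields the stated equality. The main obstacle is the commutation of $\flat$ with $\trunc{-}_n$ on crisp types; this is precisely where crispness is essential, since without it $\trunc{D}_n$ of a discrete $D$ need not be discrete, which is exactly the subtlety the author flagged in the preceding paragraph.
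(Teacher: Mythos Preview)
Your proposal is correct and follows essentially the same route as the paper: both arguments reduce to showing that $\trunc{\esh X}_n$ is discrete by using crispness to commute $\flat$ past $\trunc{-}_n$, then invoking $\flat\esh X \simeq \esh X$ via Axiom~$\Rb\flat$. The paper simply cites Corollary~6.7 of \cite{RealCohesion} for the commutation $\flat\trunc{\esh X}_n \simeq \trunc{\flat\esh X}_n$, whereas you sketch the underlying reason (the adjunction $\flat \dashv \sharp$); and the paper concludes more tersely with ``the canonical map $\trunc{\esh X}_n \to \esh_n X$ is an equivalence'' rather than unfolding the universal property as you do, but these are presentational differences only.
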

\begin{proof}
  Since $X$ is crisp, so is $\shape X$. Since $\shape X$ is crisp, $\trunc{\shape
    X}_n$ is crisply an $n$-type. Then, by Corollary 6.7 of \cite{RealCohesion},
  $\flat \trunc{ \shape X }_n = \trunc{\flat \shape X}_n$. But $\shape X$ is discrete, so by Axiom
  $\Rb\flat$, $\flat \shape X = \shape X$. Therefore, $\trunc{\shape X}_n$ is a
  discrete $n$-type
  and so the canonical map $\trunc{\shape X}_n \to \shape_n X$ is an equivalence.
\end{proof}

We can think of $\shape_n X$ as the ``fundamental $n$-groupoid'' of $X$. In
particular,
\begin{itemize}
\item $\shape_0 X$ is the set of connected components of $X$.
\item $\shape_1 X$ is the fundamental groupoid of $X$.
\end{itemize}

We can prove that $\shape_0 X$ is the set of connected components of $X$ in a
naive sense.
\begin{defn}
  Let $X$ be a type. A \emph{connected component} of $X$ is a subtype $C : X \to
  \Prop$ of $X$ which is
  \begin{enumerate}
  \item Inhabited: there is merely an $x : X$ such that $C(x)$.
    \item Connected: If $C \subseteq P \cup \neg P$, then $C \subseteq P$ or $C
      \subseteq \neg P$.\footnote{This expresses the connectivity of $C$ because
      it says that if $C$ is contained in a \emph{disjoint} union, it is
      contained wholly in one part.}
    \item Detachable: For any $x : X$, either $C(x)$ or $\neg
      C(x)$.\footnote{This says that $C$ is a \emph{component} of $X$ in the
        sense that
        $X$ is the disjoint union of $C$ and its complement.}
  \end{enumerate}
  We denote the set of connected components of $X$ by $\pi_0 X$.
\end{defn}

Connected components are quite rigid; if two connected components have non-empty
intersection, then they are equal.
\begin{lem}\label{lem:connected.component.equality}
  Suppose that $C$ and $D$ are connected components of $X$. Then $C = D$ if and
  only if $C \cap D$ is non-empty.
\end{lem}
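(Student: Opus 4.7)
The plan is to prove both directions separately, with the non-trivial content in the ``if'' direction where I must use both the detachability of one component and the connectedness of the other.

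For the ``only if'' direction, if $C = D$, then $C \cap D$ equals $C$, which is inhabited by hypothesis, so $C \cap D$ is non-empty.

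For the ``if'' direction, assume there is merely some $x : X$ with $C(x)$ and $D(x)$. By propositional extensionality and function extensionality, to show $C = D$ as subtypes it suffices to show $C \subseteq D$ and $D \subseteq C$. I will prove $C \subseteq D$; the reverse inclusion follows symmetrically. Since $D$ is detachable, the predicate $D$ satisfies $X \subseteq D \cup \neg D$, and therefore trivially $C \subseteq D \cup \neg D$. Applying the connectedness property of $C$ to the subtype $D$, I conclude that either $C \subseteq D$ or $C \subseteq \neg D$. The second alternative is ruled out by the witness: since there merely exists $x$ with both $C(x)$ and $D(x)$, we have $\neg(C \subseteq \neg D)$. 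Hence $C \subseteq D$, as required.

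The only subtlety is that the existence of the witness $x$ with $C(x) \wedge D(x)$ is merely given (propositionally truncated), while the conclusion $C \subseteq D$ is itself a proposition, so the truncation can be eliminated without issue. Likewise, the disjunction from connectedness is a proposition, and ruling out one disjunct from the witness is propositional, so the whole argument goes through cleanly. I do not anticipate any real obstacle: the combination of connectedness and detachability is precisely designed to make this clopen-component style argument work constructively.
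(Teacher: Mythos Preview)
Your proof is correct and follows essentially the same approach as the paper: use detachability of $D$ to get $C \subseteq D \cup \neg D$, apply connectedness of $C$, and rule out $C \subseteq \neg D$ using the inhabitant of $C \cap D$, then argue symmetrically. Your additional remarks about eliminating the propositional truncation are correct and make explicit what the paper leaves implicit.
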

\begin{proof}
If $C = D$, then $C \cap D$ is $C$ and so is inhabited.

  Since $D$ is detachable, we have that $X \subseteq D \cup \neg D$, and therefore $C \subseteq D \cup \neg D$. Now, $C$ is connected, so $C \subseteq D$ or $C \subseteq \neg D$; but it can't be the latter because then their intersection would be empty. So, $C \subseteq D$ and symmetrically $D \subseteq C$.
\end{proof}

Intuitively, $\shape_0 X$ should be the set of connected components of $X$
and $(-)^{\shape_0} : X \to \shape_0 X$ should send $x : X$ to the connected
component $x^{\shape_0}$ it is contained in. We can justify this intuition with the following
theorem.
\begin{lem}\label{lem:elements.of.shape.0.are.connected.components}
  Let $u : \shape_0 X$, and let $C_u : X \to \Prop$ be defined by
  $$C_u(x) :\equiv u = x^{\shape_0}$$
  Then $C_u$ is a connected component of $X$, giving us a map $C : \shape_0 X \to
  \pi_0 X$.
\end{lem}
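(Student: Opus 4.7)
My plan is to verify the three defining conditions of a connected component --- inhabitedness, detachability, and connectedness --- in that order, since the later two will reuse ingredients from the earlier steps. For \emph{inhabitedness}, I would use that the modal unit $(-)^{\esh_0} : X \to \esh_0 X$ is $\esh_0$-connected. Since $\esh_0 \emptyset = \emptyset$ (localization adds no elements), $\esh_0$-connectedness forces each fiber to be merely inhabited, so $(-)^{\esh_0}$ is surjective. This yields some $x_0 : X$ with $x_0^{\esh_0} = u$, which witnesses $C_u(x_0)$ by reflexivity.

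For \emph{detachability}, I need to decide $u = x^{\esh_0}$ in $\esh_0 X$ for every $x : X$. Because $\esh_0 X$ is a discrete set, identity types in it are themselves discrete propositions, and in Real Cohesion these are decidable --- this follows from the interplay between $\flat$, the axiom $\Rb\flat$, and crisp excluded middle, which together allow decidability of equality between elements of a discrete set to be derived from decidability of crisp propositions. This is the step I expect to be the main technical obstacle; it is also the place where Real Cohesion is essential, since plain HoTT does not validate decidability of equality in discrete sets.

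For \emph{connectedness}, suppose $C_u \subseteq P \cup \neg P$. Using detachability and the hypothesis, I would define
\[
f : X \to \mathbf{2}, \qquad f(x) :\equiv \begin{cases} 1 & \text{if } C_u(x) \wedge P(x), \\ 0 & \text{otherwise}. \end{cases}
\]
This is total because for any $x$ the conjunction $C_u(x) \wedge P(x)$ is decidable: if $\neg C_u(x)$ then it fails, while if $C_u(x)$ then by hypothesis we may decide $P(x)$. Since $\mathbf{2}$ is discrete, $f$ factors through a unique $\tilde f : \esh_0 X \to \mathbf{2}$ with $\tilde f \circ (-)^{\esh_0} = f$. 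For any $x$ with $C_u(x)$ we have $x^{\esh_0} = u$, hence $f(x) = \tilde f(u)$, so $f$ is constant on $C_u$. Picking an $x_0$ from the inhabitedness step and examining the value $f(x_0) \in \mathbf{2}$: either $f \equiv 1$ on $C_u$, giving $C_u \subseteq P$, or $f \equiv 0$ on $C_u$, in which case for any $x \in C_u$ the conjunction $C_u(x) \wedge P(x)$ fails while $C_u(x)$ holds, forcing $\neg P(x)$ and hence $C_u \subseteq \neg P$.
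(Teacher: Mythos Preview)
Your proof is correct and matches the paper for inhabitedness and detachability (the paper likewise invokes surjectivity of the unit, and decidable equality in discrete sets via Lemma~8.15 of \cite{RealCohesion}). The connectedness argument differs: the paper works directly on the subtype $\dsum{x : X} C_u(x)$, observing that this is literally the fiber $\fib_{(-)^{\esh_0}}(u)$ and hence $\esh_0$-connected, so any map from it into the discrete set $\{0,1\}$ factors through a point and is therefore constant. Your version instead extends the characteristic map to all of $X$ using detachability, and then factors through $\esh_0 X$, using that elements of $C_u$ share the image $u$. Both are valid, but the paper's route is independent of detachability --- which is why the paper can order the clauses inhabited/connected/detachable, whereas your argument forces the order inhabited/detachable/connected. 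One minor quibble: your inhabitedness step via ``$\esh_0 \emptyset = \emptyset$'' only shows each fiber is \emph{non-empty}, which is constructively weaker than merely inhabited; the direct route is that propositions are discrete and hence $\esh_0$-modal, so $\esh_0$-connectedness of the fiber immediately gives mere inhabitation.
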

\begin{proof}
  We need to prove that $C_u$ is inhabited, connected, and detachable.
  \begin{enumerate}
    \item $C_u$ is inhabited because $(-)^{\shape_0}$ is surjective (by the same
      proof as that of Corollary 9.12 of \cite{RealCohesion}).
    \item Suppose that $C_u \subseteq P \cup \neg P$. Consider the map $\chi : \dsum{x
        : X} C_u(x) \to \{0,\, 1\}$ sending $x$ to $0$ if $P(x)$ and $x$ to $1$
      if $\neg P(x)$. As $\{0,\, 1\}$ is a discrete set (by Theorems 6.19 and 6.21 of \cite{RealCohesion}, noting that $\{0, 1\} = \{0\} +
      \{1\}$), $\chi$ factors uniquely through $\shape_0(\dsum{x : X} C_u(x))$.
      But $\dsum{x : X} C_u(x) \equiv \fib_{(-)^{\shape_0}}$ is a fiber of a $\shape_0$-unit, and so is $\shape_0$-connected. Therefore $\chi$ is constant, and
      so either all $x$ in $C_u$ satisfy $P$, or they all satisfy $\neg P$.
    \item Since $\shape_0 X$ is a discrete set,
      it has decideable equality by Lemma 8.15 of \cite{RealCohesion}. So, for any $x : X$, either $u = x^{\shape_0}$
      or not. But that exactly means that $C_u(x)$ or not.\qedhere
  \end{enumerate}
\end{proof}

\begin{thm}
  Let $X$ be a type. Then the map $C : \shape_0 X \to \pi_0 X$ of Lemma
  \ref{lem:elements.of.shape.0.are.connected.components} is an equivalence.
\end{thm}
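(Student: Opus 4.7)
The plan is to show that $C$ is a bijection between sets; since both $\esh_0 X$ (a discrete set) and $\pi_0 X$ (a subtype of $X \to \Prop$, hence a set) are $0$-truncated, it suffices to check injectivity and surjectivity, and the previous two lemmas do essentially all the work.

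For injectivity, I would use that $(-)^{\esh_0} : X \to \esh_0 X$ is surjective (by the argument already cited in the proof of Lemma \ref{lem:elements.of.shape.0.are.connected.components}). Given $u, v : \esh_0 X$ with $C_u = C_v$, merely pick some $x : X$ with $x^{\esh_0} = u$. Then $C_u(x)$ holds by reflexivity, so $C_v(x)$ does as well, yielding $v = x^{\esh_0} = u$. Because we are proving a proposition (the identity in a set), the use of mere choice is unproblematic.

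For surjectivity, I would take a connected component $C : X \to \Prop$ and use its inhabitedness to merely obtain $x : X$ with $C(x)$. Set $u :\equiv x^{\esh_0}$ and consider $C_u$, which is a connected component by Lemma \ref{lem:elements.of.shape.0.are.connected.components}. Since $C_u(x)$ holds by reflexivity and $C(x)$ holds by choice, the intersection $C \cap C_u$ is inhabited, so Lemma \ref{lem:connected.component.equality} gives $C = C_u$. This shows $C$ lies in the image of the map, and again the use of mere existence is justified because being in the image is a proposition in the set $\pi_0 X$.

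There is no real obstacle here; the only small care to take is verifying that $\pi_0 X$ is a set and that all uses of merely existing witnesses happen in the course of proving propositional statements. The conceptual content of the theorem was already packed into the two preceding lemmas: the \emph{rigidity} of connected components (two overlapping components coincide) and the \emph{componentness} of the fibers $C_u$ (which relied on decidable equality of $\esh_0 X$ and on the $\esh_0$-connectedness of the fibers of $(-)^{\esh_0}$).
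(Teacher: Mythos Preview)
Your proposal is correct and follows essentially the same approach as the paper: prove that $C$ is both injective and surjective, using inhabitedness of connected components and Lemma~\ref{lem:connected.component.equality} for surjectivity, and unwinding the definition of $C_u$ for injectivity. The only cosmetic difference is that in the injectivity step the paper invokes the inhabitedness of $C_u$ (already established in Lemma~\ref{lem:elements.of.shape.0.are.connected.components}) to obtain $x$, whereas you invoke surjectivity of $(-)^{\esh_0}$ directly; these amount to the same thing.
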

\begin{proof}
  We will show that the map $C$ is surjective and injective.
\begin{enumerate}
\item To show that $C$ is surjective, suppose that $U$ is a connected component
  of $X$, seeking to witness $\trunc{\fib_C(U)}$. Since we are seeking a
  proposition and $U$ is inhabited, we may assume that $x : X$ is in $U$. Then
  $x$ is in $C_{x^{\shape_0}} \cap U$, so that $C_{x^{\shape_0}} = U$ by Lemma \ref{lem:connected.component.equality}.
\item To show that $C$ is injective, suppose that $C_u = C_v$ seeking to show
  that $u = v$. If $C_u = C_v$, then $C_u \cap C_v = C_u$ is merely inhabited.
  Since we are seeking a proposition, let $x$ be an element in the intersection.
  But then $u = x^{\shape_0}$ and $v = x^{\shape_0}$, so $u = v$.  \qedhere
\end{enumerate}
\end{proof}

\begin{rmk}
  Though we have framed this paper as taking place in the setting of Real
  Cohesion, it will in fact mostly use the ``locally contractible'' part of the
  theory --- namely, crisp variables, the comodality $\flat$, the modality
  $\shape$, and the axiom relating them for crisp types. The only extra condition
  is that $\flat$ commute with propositional truncation, which, as proven in
  \cite{RealCohesion}, uses the codiscrete modality $\#$. It also follows from
  the fact (Proposition 8.8 of \cite{RealCohesion}) that propositions are
  discrete which only uses that $\shape$ is given by localization at a family of
  pointed types.

  In particular, Theorem \ref{thm:locally.crisply.discrete.BAut.discrete} replies only on crisp type theory, while Theorem \ref{thm:characterizing.shape.fibration} relies on the adjoint relationship of $\shape$ and $\flat$ (namely, that crisp types are $\shape$-modal if and only if they are $\flat$-comodal). Theorems \ref{thm:group.action.fibrations} and \ref{thm:shape.preserves.crisp.connectedness} relies only on Theorem \ref{thm:characterizing.shape.fibration}, and are therefore also valid in general cohesion. On the other hand, the specific examples in Sections \ref{sec:examples}, \ref{sec:higher.groups}, \ref{sec:connectedness} and \ref{sec:Covering.Theory} take place in real cohesion.

Therefore, the theory of $\shape$-fibrations and coverings in the coming sections should work equally well in other
settings that have an adjoint ${\modal} \dashv\, \comodal$ modality/comodality pair
implemented using crisp variables in which $\comodal$ preserves propositional
truncation. A likely example of such a situation would be the adjoint pair
$\mathfrak{I} \dashv \&$ between the crystaline modality $\mathfrak{I}$ which is
given by localizing at a family of infinitesimal types, and the infinitesimal
flat modality $\&$ which appears (in the language of $\infty$-toposes, rather
than type theory) in Schreiber's \cite{DiffCohomologyCohesiveTopos}. Since
$\mathfrak{I}$ is the localization at a family of pointed types,
propositions are crystaline and so $\&$ commutes with propositional truncation. In this setting, Theorem \ref{thm:characterizing.shape.fibration} would be used with Lemma \ref{lem:etale.iff.locallyconstant.modal} to show that the projections of certain bundles are $\mathfrak{I}$-{\'e}tale (that is, formally {\'e}tale or locally diffeomorphic).

The modality $\mathfrak{I}$ is left exact, and so every map is an
$\mathfrak{I}$-fibration. However, $\mathfrak{I}$-{\'e}tale maps include the
\emph{formally {\'e}tale} maps, or \emph{local diffeomorphisms}. So the
applications to covering theory of Section \ref{sec:Covering.Theory} can be
interpreted in this setting as well.
\end{rmk}
\section{Classifying Types of Discrete Structures are Discrete}\label{sec:discrete.classifying.types}

In this section, we will show that the classifying types of bundles of crisply discrete structures are themselves discrete. As a corollary, the fibers of such a bundle depend only on the homotopy type of the base space. We will use this fact to show that maps whose fibers have a merely constant homotopy type --- merely equivalent to some crisply discrete type --- are $\shape$-fibrations.

First, we need a good notion of ``type of discrete objects''. We will call these types \emph{locally discrete}.
\begin{defn}
A type $X$ is \emph{locally discrete} if it is $\shape$-separated, that is, for all $x,\, y : X$, $x = y$ is discrete. A crisp type $X$ is \emph{locally crisply discrete} if for all crisp $x,\, y :: X$, $x = y$ is crisply discrete; more explicitly, for all $x,\, y : \flat X$, $x_{\flat} = y_{\flat}$ is crisply discrete.
\end{defn}

\begin{rmk}
We can't explicitly quantify over crisp elements $x,\, y :: X$ in Shulman's crisp type theory, but we can quantify over cohesive elements $x,\, y : \flat X$. These amount to the same thing, since if $x$ and $y$ are crisp elements of $X$, then $x^{\flat}{}_{\flat} =y^{\flat}{}_{\flat}$ is the same type as $x = y$.

In Agda, which has incorporated the $\flat$ modality since version 2.6, we can quantify over crisp variables.
  \end{rmk}

That we can think of locally discrete types as being types \emph{of} discrete objects is justified by the following lemma.
\begin{lem}
The type $\Type_{\shape}$ of discrete types is locally discrete.
\end{lem}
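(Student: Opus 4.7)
The plan is to reduce the claim to the closure properties of $\esh$-modal types under $\Sigma$ and $\Pi$, using univalence. Given two discrete types $X, Y : \Type_{\esh}$, I want to show $X =_{\Type_{\esh}} Y$ is discrete.

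First I would observe that because $\Type_{\esh}$ is defined as $\dsum{X : \Type}\term{is!Modal}(X)$ and $\term{is!Modal}(X)$ is a proposition, the identity type $X =_{\Type_{\esh}} Y$ is equivalent to the identity type $X =_{\Type} Y$ between the underlying types. By univalence this is in turn equivalent to the type of equivalences $X \simeq Y \equiv \dsum{f : X \to Y} \type{isEquiv}(f)$. So it suffices to prove that $X \simeq Y$ is $\esh$-modal whenever $X$ and $Y$ are.

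Next I would assemble this from Lemma \ref{lem:modality.facts} and its corollary. Because $Y$ is $\esh$-modal, the dependent product $X \to Y$ is $\esh$-modal by the second clause of Lemma \ref{lem:modality.facts} (constant family case). For each $f : X \to Y$, the corollary immediately after Lemma \ref{lem:modality.facts} (applied with the modal types $X$ and $Y$) gives that $\type{isEquiv}(f)$ is $\esh$-modal. Finally, since $X \to Y$ is modal and each fiber $\type{isEquiv}(f)$ is modal, the first clause of Lemma \ref{lem:modality.facts} tells us that the total space $X \simeq Y$ is $\esh$-modal, i.e.\ discrete.

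There really is no main obstacle here: the argument is a direct unpacking of the definition of $\Type_{\esh}$ followed by univalence and the standard closure properties. The only mild subtlety worth noting is making sure that the identifications in the subuniverse $\Type_{\esh}$ agree with those in $\Type$, which holds because $\term{is!Modal}$ is a proposition; everything else is formal.
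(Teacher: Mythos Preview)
Your proof is correct and is essentially the same argument as the paper's, just spelled out in more detail: the paper simply asserts that ``for any modality, the types of identifications between modal types are equivalent to modal types,'' which is exactly what you establish via univalence and the closure of modal types under $\Sigma$ and $\Pi$. The only point the paper adds that you do not mention is the size remark that $\Type_{\esh}$ is separated relative to the canonical extension of $\esh$ to a larger universe, but this is a technicality rather than a gap in your reasoning.
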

\begin{proof}
For any modality, the types of identifications between modal types are equivalent to modal types. In particular, $\Type_{\shape}$ is separated relative to the canonical extension of $\shape$ to any universe containing $\Type$.
\end{proof}

In \cite{LocalizationInHoTT}, Christensen, Opie, Rijke, and Scoccola show that
if a modality ${\modal}$ is given by localization at a type $X$, then the
${\modal}$-separated types also form a modality whose operator is given by
localization at the suspension $\Sigma X$ (see Lemma 2.15 and Remark 2.16 of
\cite{LocalizationInHoTT}). As a corollary, by Lemma \ref{lem:modality.facts} we get that locally discrete types are closed under dependent sums.
\begin{lem}
If $X$ is locally discrete and $P : X \to \Type$ is a family of locally discrete
types, then $\dsum{x : X} P x$ is locally discrete. \qedhere
\end{lem}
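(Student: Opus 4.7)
The plan is to reduce this directly to Lemma \ref{lem:modality.facts} applied to a different modality. By the result of Christensen, Opie, Rijke, and Scoccola cited in the preceding paragraph, since $\esh$ is the localization at $\Rb$, the $\esh$-separated types themselves form a modality --- call it $\esh'$ --- whose modal operator is given by localization at the suspension $\Sigma \Rb$. By definition a type is locally discrete if and only if it is $\esh$-separated, which is the same as being $\esh'$-modal.

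So the hypothesis translates to: $X$ is $\esh'$-modal, and for every $x : X$, the type $Px$ is $\esh'$-modal. The conclusion we want is that $\dsum{x : X} Px$ is $\esh'$-modal. But this is exactly the first bullet of Lemma \ref{lem:modality.facts} applied to the modality $\esh'$: a sum of a family of modal types over a modal base is modal.

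There is really no obstacle here --- all the work has been done in the preceding paragraph by identifying the separated modality $\esh'$. The only mild point to note is the size issue: the separated modality is obtained by localizing at $\Sigma \Rb$, which is small in the same sense that $\Rb$ is, so the canonical extension of $\esh'$ to larger universes (needed if $X$ lives in a larger universe than $\Rb$) goes through just as it does for $\esh$, and Lemma \ref{lem:modality.facts} applies uniformly.
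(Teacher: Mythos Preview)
Your proof is correct and is essentially identical to the paper's own argument: the paper states the lemma immediately after observing that the $\esh$-separated types form a modality (by the cited result of Christensen--Opie--Rijke--Scoccola) and then invokes Lemma~\ref{lem:modality.facts}, which is exactly what you do. Your added remark on size is a reasonable elaboration but not something the paper spells out.
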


We can package this result into a useful extension of the idea that a locally discrete type is a type \emph{of} discrete objects. Many structured objects are captured by the notion of a \emph{standard notion of structure}, which appears in the HoTT Book \cite{HoTTBook} in Section 9.8 as a tool to prove the structure identity principle. A standard notion of structure on a category $\Ca$ is a pair $(P, H)$ where $P : \Ca_0 \to \Type$ assigns to each object of $\Ca$ its type of $(P, H)$-structures (and $H$ gives a notion of homomorphism between such structures). For example, a group is a standard notion of structure on the category of sets by letting $P$ take each set to the set of group structures on it. We can read the previous lemma as saying that discretely structured discrete objects are also discrete, in the following way.
\begin{cor}
Let $\Ca$ be a category whose type of objects $\Ca_0$ is locally discrete type, and $(P, H)$ be a standard notion of structure on $\Ca$ such that for all $x : \Ca_0$, $Px$ is discrete. Then the type of $(P,H)$ structures is locally discrete.
\end{cor}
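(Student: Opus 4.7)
The plan is to identify the type of $(P,H)$-structures with the dependent sum $\dsum{x : \Ca_0} Px$ and then apply the preceding lemma on closure of local discreteness under dependent sums. So the argument is essentially a one-liner once the pieces are lined up correctly.

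First I would recall from Section 9.8 of the HoTT Book that for a standard notion of structure $(P, H)$ on a category $\Ca$, the underlying type of $(P,H)$-structured objects is precisely $\dsum{x : \Ca_0} Px$; the homomorphism predicate $H$ enters only in defining morphisms, not in defining the underlying object type. So it suffices to show that this sum is locally discrete.

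Next I would verify the hypotheses of the preceding lemma. The base $\Ca_0$ is locally discrete by assumption. For the fibers, each $Px$ is assumed to be discrete, i.e.\ $\esh$-modal. Since for a higher modality the identity types of a modal type are again modal (this is built into the definition of higher modality via the final clause asserting that $(-)^{\bang} : (u = v) \to \bang(u = v)$ is an equivalence for $u, v$ in a modal type), every discrete type is in particular locally discrete. Thus each $Px$ is locally discrete.

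Applying the preceding lemma to the family $P : \Ca_0 \to \Type$ then gives that $\dsum{x : \Ca_0} Px$ is locally discrete, which is what we wanted. The only step that requires a moment's thought is the observation that ``discrete implies locally discrete,'' but this is immediate from the definition of higher modality, so there is no serious obstacle here; the result really is a direct corollary of the preceding closure lemma together with the identification of structured objects as a dependent sum.
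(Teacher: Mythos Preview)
Your proposal is correct and takes essentially the same approach as the paper: identify the type of $(P,H)$-structures with $\dsum{x : \Ca_0} Px$ and invoke the preceding closure lemma for local discreteness under dependent sums. You spell out the step ``discrete implies locally discrete'' that the paper leaves implicit, but otherwise the arguments coincide.
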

\begin{proof}
The type of structures is just the dependant sum $\dsum{x : \Ca_0} P x$, which is locally discrete by the above corollary.
\end{proof}

There are two ways to say a crisp type $X :: \Type$ is discrete: either $(-)_{\flat} : \flat X \to X$ is an equivalence or $(-)^{\shape} : X \to \shape X$ is an equivalence. Correspondingly, there are two ways to say that a crisp type is locally discrete, which we have given the names of locally discrete and locally crisply discrete. Though a crisp type which is locally discrete will always be locally crisply discrete, these two notions are likely \emph{not} equivalent in general since the latter only quantifies over \emph{crisp} elements of $X$. We can, however, give another characterization of locally crisply discrete types.
\begin{lem}
A crisp type $X$ is locally crisply discrete if and only if $(-)_{\flat} : \flat X \to X$ is an embedding.
\end{lem}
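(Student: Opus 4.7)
The plan is to unfold the definition of embedding and use $\flat$-induction to reduce the statement about arbitrary points of $\flat X$ to a statement about crisp points, where the hypothesis applies directly. Recall that $(-)_\flat : \flat X \to X$ is an embedding if and only if for every $u,\, v : \flat X$ the induced map $\ap\,(-)_\flat : (u = v) \to (u_\flat = v_\flat)$ is an equivalence. Being an equivalence is a proposition, so it is in particular a type depending on $u$ and $v$, and we may apply $\flat$-induction to each variable to reduce to the case $u \equiv x^\flat$ and $v \equiv y^\flat$ for some crisp $x,\, y :: X$.

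The key input is the fact that $\flat$ commutes with identity types at crisp points: for crisp $x,\, y :: X$, the canonical map gives an equivalence $\flat(x =_X y) \simeq (x^\flat =_{\flat X} y^\flat)$ (this is one of the basic rules of $\flat$ from Section 6 of \cite{RealCohesion}). Under this equivalence, the map $\ap\,(-)_\flat : (x^\flat = y^\flat) \to (x =_X y)$ is identified with the counit $(-)_\flat : \flat(x = y) \to (x = y)$ of the identity type itself --- essentially by computation, since the forward direction $\flat(x = y) \to (x^\flat = y^\flat)$ is defined on $p^\flat$ as $\ap\,(-)^\flat(p)$, whose composite with $\ap\,(-)_\flat$ is $p$.

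With this in hand, both directions are routine. If $X$ is locally crisply discrete, then for crisp $x,\, y :: X$ the type $x = y$ is crisply discrete, i.e.\ $(-)_\flat : \flat(x = y) \to (x = y)$ is an equivalence; transporting along the equivalence above, $\ap\,(-)_\flat : (x^\flat = y^\flat) \to (x = y)$ is an equivalence, and by the $\flat$-induction above, $(-)_\flat : \flat X \to X$ is an embedding. Conversely, if $(-)_\flat$ is an embedding, then specializing to $u \equiv x^\flat$ and $v \equiv y^\flat$ for crisp $x,\, y :: X$ and transporting back along the same equivalence shows that $(-)_\flat : \flat(x = y) \to (x = y)$ is an equivalence, so $x = y$ is crisply discrete.

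There is no real obstacle; the only subtlety worth being careful about is the identification of $\ap\,(-)_\flat$ with the counit of $\flat$ on the identity type, which is where the commutation of $\flat$ with identity types at crisp points is genuinely used.
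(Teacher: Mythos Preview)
Your proposal is correct and follows essentially the same approach as the paper: both use the commuting triangle coming from the left exactness of $\flat$ (Theorem 6.1 of \cite{RealCohesion}), identifying $\ap\,(-)_{\flat}$ at $x^{\flat}, y^{\flat}$ with the counit $(-)_{\flat} : \flat(x = y) \to (x = y)$, so that one leg is an equivalence iff the other is. You are slightly more explicit than the paper in invoking $\flat$-induction to reduce the embedding condition from arbitrary $u, v : \flat X$ to crisp $x^{\flat}, y^{\flat}$, which the paper leaves implicit.
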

\begin{proof}
Recall the left exactness of $\flat$ (Theorem 6.1 of \cite{RealCohesion}); we have an equivalence $\flat(x = y) \simeq (x^{\flat} = y^{\flat})$ for all crisp $x,\, y :: X$ making the following diagram commute:
\begin{center}
    \begin{tikzcd}
\flat(x = y) \arrow[rd, "(-)_{\flat}"'] \arrow[rr, "\simeq"] &       & x^{\flat} = y^{\flat} \arrow[ld, "\ap_{(-)_{\flat}}"] \\
                                                             & x = y &
\end{tikzcd}
\end{center}

Now, $X$ is locally crisply discrete if and only if the downwards map on the left is an equivalence, and $(-)_{\flat}$ is an embedding if and only if the downwards map on the right is an equivalence.
\end{proof}

Let's turn our attention to classifying types. In general, any type $X$ can be seen as ``classifying'' the maps into it. This rather abstract way of thinking is more useful the more readily the objects of $X$ can be turned into types, since maps into $\Type$ correspond to arbitrary bundles of types. For an $x : X$, the following general definition gives a classifying type for ``bundles of $x$s".
\begin{defn}
For a type $X$ and a term $x : X$, we define
$$\BAut_X(x) :\equiv \dsum{y : X} \trunc{x = y}$$
\end{defn}

This notation is inspired by the notation for the classifying space $\BB G$ of principal $G$-bundles for a topological group $G$. If $G \simeq \Aut_X(x)$ is the group of automorphisms of some object (as, for example, $\term{GL}_n(\Rb) \simeq \Aut_{\Vect_{\Rb}}(\Rb^n)$), then $\BAut_X(x)$ as defined above does classify principal $G$-bundles. If $\Aut_X(x)$ has a recognizable name $G$, we will write $\BB G$ for $\BAut_X(x)$.

We will now show that if $X$ is crisply locally discrete, and $x :: X$ is a crisp element, then $\BAut_X(x)$ is discrete.
\begin{lem}
For any crisp type $X$ and crisp $x :: X$, we have an equivalence $\flat\BAut_X(x) \simeq \BAut_{\flat X}(x^{\flat})$ making the following triangle commute:
\[\begin{tikzcd}
\flat\BAut_X(x) \arrow[rd, "(-)_{\flat}"'] \arrow[rr, "\simeq"] &            & \BAut_{\flat X}(x^{\flat}) \arrow[ld, "{(y,p) \mapsto y_{\flat},\, \ldots}"] \\
                                                                & \BAut_X(x) &
\end{tikzcd}\]
\end{lem}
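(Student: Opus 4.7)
The plan is to construct the equivalence by pushing $\flat$ into the $\Sigma$-type defining $\BAut_X(x)$ using left exactness of $\flat$ (Theorem 6.1 of \cite{RealCohesion}), and then simplifying the resulting fibers using that $\flat$ commutes with identity types and with propositional truncation (the latter is the hypothesis on $\flat$ recalled in the remark at the end of Section \ref{sec:cohesion.review}).

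First, since $x :: X$ is crisp, the type family $y :: X \mapsto \trunc{x = y}$ is crisp, so left exactness of $\flat$ applied to a $\Sigma$-type gives an equivalence
$$\flat \BAut_X(x) \;\equiv\; \flat\bigl(\dsum{y : X} \trunc{x = y}\bigr) \;\simeq\; \dsum{z : \flat X} \bigl(\text{let } v^\flat := z \text{ in } \flat\trunc{x = v}\bigr),$$
and by construction this equivalence is compatible with $(-)_\flat$ on the first coordinate, in the sense that the first projection composed with $(-)_\flat : \flat X \to X$ agrees with $\ap_{\fst} \circ (-)_\flat$ on the left.

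Second, using that $\flat$ commutes with propositional truncation and then left exactness applied to the identity type, for crisp $v :: X$ I get $\flat\trunc{x = v} \simeq \trunc{\flat(x = v)} \simeq \trunc{x^\flat = v^\flat}$. Assembling this inside the sum over $z : \flat X$ by $\flat$-induction replaces the fiber "let $v^\flat := z$ in $\flat\trunc{x = v}$" with the cleaner $z \mapsto \trunc{x^\flat = z}$, yielding
$$\flat \BAut_X(x) \;\simeq\; \dsum{z : \flat X} \trunc{x^\flat = z} \;\equiv\; \BAut_{\flat X}(x^\flat).$$

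To verify that the triangle commutes, I would use $\flat$-induction and reduce to elements of the form $(y, p)^\flat$ with $y :: X$ and $p :: \trunc{x = y}$ crisp. Unwinding, such an element is sent by the composite equivalence to a pair $(y^\flat, q)$ with $q : \trunc{x^\flat = y^\flat}$ produced from $p^\flat$, and then the right-hand decoration map $(z, r) \mapsto (z_\flat, \ldots)$ sends this to $(y, p') : \BAut_X(x)$ for some $p' : \trunc{x = y}$; since the second component is propositional, $p' = p$, and the triangle commutes. The main obstacle will be the bookkeeping of $\flat$-induction and the implicit "let"-bindings: one must check that the chain of equivalences above is really natural in $z : \flat X$ and compatible with the first projection, so that the commutativity of the triangle follows on the nose (and not merely up to further identifications) after propositional truncation on the second coordinate.
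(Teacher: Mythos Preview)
Your proposal is correct and follows essentially the same route as the paper: push $\flat$ through the $\Sigma$-type (this is Lemma~6.8 of \cite{RealCohesion}, not Theorem~6.1), commute $\flat$ with propositional truncation (Corollary~6.7), then with the identity type (Theorem~6.1), and finally eliminate the let-binding (Lemma~4.4). The paper's verification of the triangle is exactly your $\flat$-induction argument, tracking the first component and using that the second component lies in a proposition.
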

\begin{proof}
Consider the following equivalence:
\begin{align*}
    \flat\BAut_X(x) &:\equiv \flat\big(\dsum{y : X} \trunc{x = y}\big) \\
        &\simeq \dsum{u : \flat X} \mbox{$\term{let}$ $y^{\flat}:\equiv u$ $\term{in}$ $\flat \trunc{x = y}$ }\\
        &\simeq \dsum{u : \flat X} \mbox{$\term{let}$ $y^{\flat}:\equiv u$ $\term{in}$ $\trunc{\flat(x = y)}$ }\\
        &\simeq \dsum{u : \flat X} \mbox{$\term{let}$ $y^{\flat}:\equiv u$ $\term{in}$ $\trunc{x^{\flat} = y^{\flat}}$ }\\
        &\simeq \BAut_{\flat X} (x^{\flat}).
\end{align*}
The first equivalence follows from Lemma 6.8, the second from Corollary 6.7, and the third from Theorem 6.1 of \cite{RealCohesion}. The final equivalence follows from Lemma 4.4 of \cite{RealCohesion}, which says that $(\mbox{let $y^{\flat} := u$ in $f(y^{\flat})$}) = f(u)$.

On $(y, p)^{\flat} : \flat\BAut_X(x)$, this equivalence yields $(y^{\flat},\, \cdots) : \BAut_{\flat X}(x^{\flat})$, and so when applying $(-)_{\flat}$ to either side, we find that the result is the same.
\end{proof}

\begin{thm}\label{thm:locally.crisply.discrete.BAut.discrete}
Suppose $X$ is locally crisply discrete and $x :: X$. Then $\BAut_X(x)$ is (crisply) discrete.
\end{thm}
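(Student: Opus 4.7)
The plan is to show that the counit $(-)_{\flat} : \flat \BAut_X(x) \to \BAut_X(x)$ is an equivalence. By the preceding lemma, this counit is identified with the canonical map
$$\psi : \BAut_{\flat X}(x^{\flat}) \to \BAut_X(x), \qquad (u, q) \mapsto (u_{\flat}, \ldots),$$
so it suffices to show $\psi$ is an equivalence.

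First I would use the local crisp discreteness of $X$ via the previous lemma, which tells us that $(-)_{\flat} : \flat X \to X$ is an embedding. Consequently the predicate
$$\Ic(y) :\equiv \dsum{u : \flat X} (u_{\flat} = y)$$
is proposition-valued on $X$. The key step is to observe that $\Ic(x)$ holds, witnessed by $(x^{\flat}, \refl)$ using $x^{\flat}_{\flat} \equiv x$. Because $\Ic$ is a proposition, it is preserved by identifications in $X$, and hence by propositional-truncation induction it is preserved by any mere identification $\trunc{x = y}$. Thus every $(y, p) : \BAut_X(x)$ admits a unique lift $\tilde{y} : \flat X$ with $\tilde{y}_{\flat} = y$.

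Next I would build an inverse $\psi^{-1} : \BAut_X(x) \to \BAut_{\flat X}(x^{\flat})$ by sending $(y, p)$ to $(\tilde{y}, p')$, where $p' : \trunc{x^{\flat} = \tilde{y}}$ is obtained from $p$ via the inverse of the equivalence $\ap_{(-)_{\flat}} : (x^{\flat} = \tilde{y}) \simeq (x^{\flat}_{\flat} = \tilde{y}_{\flat}) \equiv (x = y)$ (which is an equivalence because $(-)_{\flat}$ is an embedding). The two round-trip identities reduce to equalities of pairs whose first components match by uniqueness of the lift (respectively by construction), and whose second components automatically agree because they lie in propositions.

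The only real obstacle is bookkeeping: one has to keep track of the judgmental equality $x^{\flat}_{\flat} \equiv x$ so that $\ap_{(-)_{\flat}}$ has the desired codomain, and verify that each step along the way is aimed at a proposition so that mere-equality induction is licensed. Once this is in place the argument is mechanical, and we conclude that $\psi$ is an equivalence, so $\BAut_X(x)$ is crisply discrete.
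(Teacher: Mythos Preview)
Your proposal is correct and uses the same core ingredients as the paper's proof: reduce via the preceding lemma to showing $\psi : \BAut_{\flat X}(x^{\flat}) \to \BAut_X(x)$ is an equivalence, use that $(-)_{\flat}$ is an embedding (from local crisp discreteness), and exploit that $x$ itself lifts to $x^{\flat}$ together with truncation induction on $\trunc{x = y}$ to lift every $(y,p) : \BAut_X(x)$.

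The only difference is packaging. The paper observes that since $(-)_{\flat}$ is an embedding, so is $\psi$; hence it suffices to show $\psi$ is surjective, which follows in one line by assuming a representative $p : x = y$ and exhibiting $(x^{\flat}, \ldots)$ in the fiber. You instead unfold this into an explicit inverse, which requires the bookkeeping you flag (transporting along the propositional equality $\tilde{y}_{\flat} = y$ to align the codomain of $\ap_{(-)_{\flat}}$). Both arguments are fine; the embedding-plus-surjective formulation just lets you avoid writing down the inverse and its round-trip identities.
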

\begin{proof}
By the above lemma, it suffices to prove that $(y,\cdot) \mapsto (y_{\flat},\cdot) : \BAut_{\flat X}(x^{\flat}) \to \BAut_X(x)$ is an equivalence. Now, $(-)_{\flat} : \flat X \to X$ is an embedding because $X$ is locally crisply discrete, so the map in question is an embedding as well. We just need to show it is surjective.

Suppose $y : \BAut_X(x)$. To prove surjectivity, we need to inhabit $\trunc{\fib(y)}$. Because we are trying to prove a proposition, we may assume that $p : x = y$; but then $(x^{\flat}, p) : \fib(y)$.
\end{proof}

\section{Examples of $\shape$-Fibrations}\label{sec:examples}

By using Theorem \ref{thm:locally.crisply.discrete.BAut.discrete}
together with Theorem \ref{thm:fibration.iff.locallyconstant.modal.fibers}, we
get a nice trick for showing that a map $f : X \to Y$ is a $\shape$-fibration. We
just need give a crisply discrete type $F :: \Type_{\shape}$ such that $\shape
\fib_f (y)$ is merely equivalent to $F$ for all $y : Y$.

\begin{thm}\label{thm:characterizing.shape.fibration}
  Let $f : X \to Y$. If there is a crisp type $F :: \Type_{\shape}$ such that for all $y
  : Y$, $\trunc{F = \shape \fib_f(y)}$, then $f$ is a $\shape$-fibration. If
  furthermore we have that $\trunc{F = \fib_f(y)}$ for all $y : Y$, then $f$ is
  $\shape$-{{\'e}tale}. If $F$ is an $n$-type, then $f$ is a $\shape_{n+1}$-fibration
  (resp. $\shape_{n+1}$-{\'e}tale).
\end{thm}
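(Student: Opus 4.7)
The plan is to reduce each claim to its corresponding factorization criterion --- Theorem \ref{thm:fibration.iff.locallyconstant.modal.fibers} for the $\esh$-fibration claim and Lemma \ref{lem:etale.iff.locallyconstant.modal} for the $\esh$-{\'e}tale claim --- and then use Theorem \ref{thm:locally.crisply.discrete.BAut.discrete} to produce the required factorization. For the first claim, I need to factor $\esh \fib_f : Y \to \Type_{\esh}$ through the unit $(-)^{\esh} : Y \to \esh Y$. The hypothesis $\trunc{F = \esh \fib_f(y)}$ for every $y : Y$ says exactly that $\esh \fib_f$ lands in the connected component $\BAut_{\Type_{\esh}}(F)$. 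Since $\Type_{\esh}$ is locally discrete (hence locally crisply discrete for crisp inputs), and $F$ is crisp, $\BAut_{\Type_{\esh}}(F)$ is itself (crisply) discrete by Theorem \ref{thm:locally.crisply.discrete.BAut.discrete}. Because discrete types are $\esh$-modal, the universal property of $(-)^{\esh}$ produces the desired factorization; postcomposing with the inclusion $\BAut_{\Type_{\esh}}(F) \hookrightarrow \Type_{\esh}$ finishes the argument.

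For the $\esh$-{\'e}tale claim, the stronger hypothesis $\trunc{F = \fib_f(y)}$ together with the fact that being discrete is a proposition (and $F$ is discrete) shows each fiber $\fib_f(y)$ is already discrete. So $\fib_f$ is well-defined as a map $Y \to \Type_{\esh}$, and it factors through the discrete type $\BAut_{\Type_{\esh}}(F)$. Factoring further through $\esh Y$ and invoking Lemma \ref{lem:etale.iff.locallyconstant.modal} gives that $f$ is $\esh$-{\'e}tale.

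The $n$-truncated version runs in parallel, replacing $\esh$ throughout by $\esh_{n+1}$ and $\Type_{\esh}$ by the universe $\Type_{\esh_{n+1}}$ of discrete $(n{+}1)$-truncated types. Since $F$ is crisp, discrete, and $n$-truncated, $F$ is $\esh_{n+1}$-modal; moreover the hypothesis $\trunc{F = \esh \fib_f(y)}$ implies that $\esh \fib_f(y)$ is merely $n$-truncated, hence truly $n$-truncated, so $\esh_{n+1} \fib_f(y) \simeq \esh \fib_f(y)$ is merely equal to $F$. The main step to check is that $\BAut_{\Type_{\esh_{n+1}}}(F)$ is $\esh_{n+1}$-modal: it is $(n{+}1)$-truncated because its loop space at $F$ is $\Aut(F)$, an $n$-type; and it is discrete by the same application of Theorem \ref{thm:locally.crisply.discrete.BAut.discrete} to the locally (crisply) discrete $\Type_{\esh_{n+1}}$. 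With this in hand, the same factorization argument delivers both the $\esh_{n+1}$-fibration and, under the stronger hypothesis, the $\esh_{n+1}$-{\'e}tale conclusion. The only subtle point in the whole proof is precisely this verification that $\BAut_{\Type_{\esh_{n+1}}}(F)$ is $\esh_{n+1}$-modal, but both pieces reduce cleanly to results already established.
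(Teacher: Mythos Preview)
Your proposal is correct and follows essentially the same route as the paper: factor through $\BAut_{\Type_{\esh}}(F)$, invoke Theorem~\ref{thm:locally.crisply.discrete.BAut.discrete} to see that this type is discrete, and then apply Theorem~\ref{thm:fibration.iff.locallyconstant.modal.fibers} (respectively Lemma~\ref{lem:etale.iff.locallyconstant.modal}) to conclude. For the $\esh_{n+1}$ case you actually spell out a step the paper leaves implicit, namely that $\esh\fib_f(y)$ is already $(n{+}1)$-truncated (being merely equal to the $n$-type $F$) and hence coincides with $\esh_{n+1}\fib_f(y)$, which is what is needed to feed into the factorization criterion for $\esh_{n+1}$.
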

\begin{proof}
By hypothesis, $\shape \fib_f$ factors through $\BAut(F)$. Since $F$ is a crisp
element of a locally discrete type, $\BAut(F)$ is discrete by Theorem
\ref{thm:locally.crisply.discrete.BAut.discrete} and therefore
$\shape \fib_f$ factors through $\shape Y$. But then, by Theorem
\ref{thm:fibration.iff.locallyconstant.modal.fibers}, $f$ is a $\shape$-fibration.
The second claim follows in the same way from Lemma
\ref{lem:etale.iff.locallyconstant.modal}. If $F$ is an $n$-type, then
$\BAut(F)$ is an $(n+1)$-type, and so the maps factor further through
$\shape_{n+1} X$.
\end{proof}

With a little effort, we can extend this trick to classify fibrations over
disconnected spaces whose fibers over each part are different. A little care
must be taken around crispness.
\begin{cor}
  Let $X,\, Y :: \Type$ and $f :: X \to Y$. Assuming the crisp axiom of choice, $f$ is a $\shape$-fibration if and only
  if there is a $F :: \trunc{\shape Y}_0 \to \Type$ such that for all $y : Y$,
  $\trunc{F(|y^{\shape_0}|) = \shape \fib_f(y)}$.
\end{cor}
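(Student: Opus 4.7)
The plan is to match up the two characterizations by going through the factorization of $\esh \fib_f$ through $\esh Y$ guaranteed by Theorem \ref{thm:fibration.iff.locallyconstant.modal.fibers}, using that the connected-component set $\trunc{\esh Y}_0$ of a crisp type $Y$ is a crisply discrete set by Axiom $\Rb\flat$.

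For the forward direction, suppose $f$ is a $\esh$-fibration. Theorem \ref{thm:fibration.iff.locallyconstant.modal.fibers} supplies a factorization $\tilde{E} : \esh Y \to \Type_{\esh}$ of $\esh \fib_f$ through the modal unit. The $0$-truncation $q : \esh Y \to \trunc{\esh Y}_0$ is surjective, and the crisp axiom of choice --- applicable because $\trunc{\esh Y}_0$ is a crisply discrete crisp set --- yields a crisp section $s :: \trunc{\esh Y}_0 \to \esh Y$ of $q$. I then set $F(c) :\equiv \tilde{E}(s(c))$. For any $y : Y$, the computation $q(s(|y^{\esh_0}|)) = |y^{\esh_0}| = q(y^{\esh})$ witnesses $\trunc{s(|y^{\esh_0}|) = y^{\esh}}$, so applying $\tilde{E}$ and using $\tilde{E}(y^{\esh}) = \esh \fib_f(y)$ gives the desired $\trunc{F(|y^{\esh_0}|) = \esh \fib_f(y)}$.

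For the reverse direction, given $F$, I decompose $Y$ along the surjection $(-)^{\esh_0} : Y \to \trunc{\esh Y}_0$ into pieces $Y_c := \fib_{(-)^{\esh_0}}(c)$ with corresponding restrictions $f_c : X_c \to Y_c$. Since $\esh$ preserves coproducts indexed by discrete sets, $\esh X \simeq \sum_c \esh X_c$ and $\esh Y \simeq \sum_c \esh Y_c$, so the comparison map $\gamma_y$ associated to $f$ at $y : Y_c$ agrees with the one associated to $f_c$; thus $f$ is a $\esh$-fibration precisely when every $f_c$ is. For crisp $c :: \trunc{\esh Y}_0$, both $Y_c$ and $F(c)$ are crisp and the hypothesis says every homotopy fiber of $f_c$ is merely equal to $F(c)$, so Theorem \ref{thm:characterizing.shape.fibration} certifies $f_c$ as a $\esh$-fibration. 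The crisp predicate ``$f_c$ is a $\esh$-fibration'' therefore holds for every crisp $c$; and since $\trunc{\esh Y}_0$ is crisply discrete, $\flat$-induction along $\trunc{\esh Y}_0 \simeq \flat \trunc{\esh Y}_0$ promotes this to every $c : \trunc{\esh Y}_0$, whence $f$ is a $\esh$-fibration.

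The main obstacle I anticipate is the reverse direction's leap from crisp $c$ to arbitrary $c$: Theorem \ref{thm:characterizing.shape.fibration} is stated only with a crisp fiber-type as hypothesis, so one must both isolate the components and verify that the resulting componentwise fibration property can be reassembled into a fibration property for $f$ by invoking $\esh$'s preservation of discrete coproducts. The forward direction is largely mechanical once one is comfortable applying the crisp axiom of choice to lift surjectivity of $q$ to a crisp section.
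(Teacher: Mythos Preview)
Your proof is correct, and your forward direction (producing $F$ from the fibration hypothesis via a crisp section obtained from the crisp axiom of choice) is essentially the paper's argument, with the cosmetic difference that you take the section into $\esh Y$ while the paper takes it into $Y$ and then applies $\esh\fib_f$ directly.

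Your reverse direction, however, takes a genuinely different route. The paper does not decompose $Y$ into connected components at all: it simply observes that the hypothesis makes $\esh\fib_f$ factor through the type $\dsum{u : \trunc{\esh Y}_0}\BAut(F(u))$, and then shows this total space is discrete in one stroke by using crisp discreteness of $\trunc{\esh Y}_0$ to replace each $u$ by a crisp $v$ and invoking Theorem~\ref{thm:locally.crisply.discrete.BAut.discrete} for $\BAut(F(v))$. This avoids entirely the coproduct-preservation argument and the identification of the $\gamma$-maps of $f$ with those of the $f_c$, and it never needs $\flat$-induction to pass from crisp $c$ to arbitrary $c$. Your approach is correct but carries more overhead: you must verify that $\esh$ preserves the discrete-indexed coproduct decomposition compatibly with $f$ (this is Lemma~1.24 of \cite{RSS} plus naturality), and you must check that for crisp $c$ the type $F(c)$ is crisply discrete before Theorem~\ref{thm:characterizing.shape.fibration} applies (this follows since $Y_c$ is crisply merely inhabited and $F(c)$ is then crisply merely equal to a discrete type). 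The payoff of your decomposition is that it literally reduces the many-fiber-type case to the single-fiber-type case of Theorem~\ref{thm:characterizing.shape.fibration}; the paper's payoff is brevity, since it reuses the idea behind that theorem's proof rather than the theorem itself.
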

\begin{proof}
  First, if there is an $F :: \trunc{\shape Y}_0 \to \Type$ such that for all $y :
  Y$, $\trunc{F(|y^{\shape_0}|) = \shape \fib_f(y)}$, then $\shape\fib_f : Y \to
  \Type$ factors through $\dsum{u : \trunc{\shape Y}_0} \BAut(F(u))$. Since
  $\trunc{\shape Y}_0$ is crisply discrete (by Proposition \ref{prop:truncation.shape}) and for all $z : \flat \trunc{\shape Y}_{0}$ we have that $(\mbox{let $v^{\flat} := z$ in $\type{isdiscrete}(\BAut(F(v)))$})$ by Theorem \ref{thm:locally.crisply.discrete.BAut.discrete}, we find that $\dsum{u : \trunc{\shape Y}_0} \BAut(F(u))$ is crisply discrete by Theorem 6.20 of \cite{RealCohesion}. Therefore, $\shape \fib_f$ factors through $(-)^{\shape}$, proving that $f$ is an
  $\shape$-fibration.

  On the other hand, suppose that $f$ is a fibration. Assuming the crisp axiom
  of choice (Theorem 6.30 of \cite{RealCohesion}), there is a crisp section $s ::
  \trunc{\shape Y}_0 \to Y$ of $|(-)^{\shape}|_0 : Y \to \trunc{\shape Y}_0$; that is, we
  may choose an element in every fiber. Define $F(u) :\equiv \shape \fib_f(su)$.
  It remains to show that $\trunc{F(|y^{\shape}|_0) = \shape \fib_f(y)}$ for all $y
  : Y$. Since $f$ is a fibration, we have that $\shape \fib_f = \fib_{\shape f}
  \circ (-)^{\shape}$ and so
$$ \trunc{F(|y^{\shape}|_0) = \shape \fib_f(y)} \simeq \trunc{\fib_{\shape
    f}((s|y^{\shape}|_0)^{\shape}) = \fib_{\shape f}(y^{\shape})}$$
It will suffice to show that $\trunc{s|y^{\shape}|_0^{\shape} = y^{\shape}}$. But this
is equivalent to $|s|y^{\shape}|_0^{\shape}|_0 = |y^{\shape}|_0$, which holds since
$s$ is a section.
\end{proof}

We can now use Theorem \ref{thm:characterizing.shape.fibration} to give a number of examples of $\shape$-fibrations. In this section, we will be working in real cohesion, assuming that $\shape$ is given by localization at the type $\Rb$ of Dedekind real numbers. We will add two more examples later, in Section \ref{}

\subsection{The Universal Cover of the
  Circle}\label{subsec:universal.cover.of.circle}

We will now show that the map $(\cos,\, \sin) : \Rb \to \Sb^1$ is a
$\shape$-fibration, where $\Sb^1$ is the unit circle in $\Rb^2$. In Section \ref{sec:Covering.Theory}, we will show that it is
indeed the universal cover of the circle $\Sb^1$.

\begin{lem}\label{lem:universal.cover.of.circle}
The map $(\cos,\, \sin) : \Rb \to \Sb^1$ is $\shape_1$-{{\'e}tale}, and so in
particular is a $\shape$-fibration.
\end{lem}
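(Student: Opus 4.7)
My plan is to apply Theorem \ref{thm:characterizing.shape.fibration} directly, taking the witnessing crisp discrete type to be $\Zb$ itself. Since $\Zb$ is a crisp set (a $0$-type), showing that every fiber of $(\cos,\sin)$ is merely equal (as a type, not just a set) to $\Zb$ will give us not just a $\esh$-fibration but a $\esh_1$-{{\'e}tale} map. Note that I aim for the stronger conclusion that the fibers themselves (rather than their shapes) are merely $\Zb$; this is reasonable because each fiber is a discrete subset of $\Rb$.

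The main analytic input is the classical fact that $(\cos t,\sin t) = (\cos s,\sin s)$ if and only if $t - s \in 2\pi \Zb$, together with surjectivity of $(\cos,\sin) : \Rb \to \Sb^1$. First I would note that $\Zb :: \Type_{\esh}$ is crisp and discrete, being built from $\Nb$ which is crisply discrete by the axiom $\Rb\flat$ (and being a set, it is a $0$-type). Then for any $(a,b) : \Sb^1$, I would argue that $\trunc{\Zb = \fib_{(\cos,\sin)}(a,b)}$. Since we are proving a proposition, we may assume a witness $t_0 : \Rb$ with $(\cos t_0, \sin t_0) = (a,b)$, obtained from the surjectivity of $(\cos,\sin)$. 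Translation by $t_0$ then gives an equivalence $\fib_{(\cos,\sin)}(1,0) \simeq \fib_{(\cos,\sin)}(a,b)$, and the map $n \mapsto 2\pi n$ provides an equivalence $\Zb \simeq \fib_{(\cos,\sin)}(1,0)$ using the characterization of when two angles give the same point on the unit circle.

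With the fibers merely equal to the crisp $0$-type $\Zb$ established, Theorem \ref{thm:characterizing.shape.fibration} immediately yields that $(\cos,\sin)$ is $\esh_1$-{{\'e}tale}, and hence a $\esh$-fibration.

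The main obstacle is really not a type-theoretic one but an analytic one: producing the surjectivity of $(\cos,\sin)$ and the characterization of its fibers inside Real Cohesive HoTT. I would treat these as standard facts about the Dedekind reals (they amount to the existence of the argument function and the $2\pi$-periodicity of the exponential), and the careful verification of these facts is orthogonal to the modal fibration machinery. Once these are in hand, the proof is a direct application of the theorem with $F :\equiv \Zb$ and $n :\equiv 0$.
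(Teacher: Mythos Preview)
Your proposal is correct and is essentially the same argument as the paper's: both show that each fiber of $(\cos,\sin)$ is merely equivalent to the crisp discrete set $\Zb$ by choosing (using surjectivity) a point $\theta$ in the fiber and observing that $k \mapsto \theta + 2\pi k$ is an equivalence, then invoking Theorem~\ref{thm:characterizing.shape.fibration} with $F \equiv \Zb$ a $0$-type to conclude $\esh_1$-\'etaleness. The only cosmetic difference is that you first translate to the fiber over $(1,0)$, while the paper works directly with a generic $\theta$ in the fiber.
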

\begin{proof}
Let $r \equiv (\cos,\, \sin)$. Over $(x, y) : \Sb^1$, the fiber of $r$ is
$r^{\ast}(x, y) :\equiv \{\theta : \Rb \mid \cos \theta = x,\, \sin \theta = y\}$. We will show that $r^{\ast}(x, y)$ is merely equivalent to $\Zb$.

For any $\theta : r^{\ast}(x, y)$ and $k : \Zb$, we have that $\theta + 2\pi k $
is in $  r^{\ast}(x, y)$. This gives map $\lam{k} \theta + 2\pi k : \Zb \to r^{\ast}(x, y)$. Moreover, given any other $\varphi : r^{\ast}(x, y)$, the difference $\varphi - \theta$ is an integral multiple of $2\pi$, which gives us a map $\lam{\varphi} \frac{\varphi - \theta}{2 \pi} : r^{\ast}(x, y) \to \Zb$. These maps are clearly inverse, and since $r$ is merely surjective there is always some $\theta$ we may choose to make this equivalence.

We have therefore shown that $r^{\ast} : \Sb^1 \to \Type$ factors through
$\BAut(\Zb)$.\footnote{In fact, since the fibers are actually $\Zb$-torsors,
  $r^{\ast}$ factors through $\BB \Zb$, which would work just as well.} But
$\Zb$ is a crisply discrete set, so by Theorem
\ref{thm:characterizing.shape.fibration}, $r$ is a fibration.
\end{proof}

We can now use the fact that $(\cos,\, \sin)$ is a fibration to calculate the
fundamental group of the circle.
\begin{thm}\label{thm:fundamental.group.of.circle}
Let $\Sb^1$ be the unit circle in $\Rb^2$. Then $\Omega \shape \Sb^1 \simeq \Zb$.
\end{thm}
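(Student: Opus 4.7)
The plan is to combine Lemma \ref{lem:universal.cover.of.circle} with the contractibility of $\esh \Rb$ to read off the loop space from a fiber sequence on homotopy types. First, I would point $\Rb$ at $0$ and $\Sb^1$ at $(1, 0) = (\cos 0,\, \sin 0)$, making $r :\equiv (\cos,\, \sin)$ into a pointed map. Since $r$ is a $\esh$-fibration, applying $\esh$ to the fiber sequence $\fib_r(1, 0) \to \Rb \xto{r} \Sb^1$ yields a fiber sequence
$$\esh \fib_r(1, 0) \to \esh \Rb \xto{\esh r} \esh \Sb^1.$$

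Next, I would identify the first two terms explicitly. The fiber $\fib_r(1, 0) = \{\theta : \Rb \mid \cos \theta = 1,\, \sin \theta = 0\}$ is equivalent to $\Zb$ via $k \mapsto 2\pi k$ (with inverse $\theta \mapsto \theta / (2\pi)$); since $\Zb$ is (crisply) discrete, we have $\esh \fib_r(1, 0) \simeq \esh \Zb \simeq \Zb$. Meanwhile, $\esh \Rb$ is contractible by the very construction of $\esh$ as localization at $\Rb$.

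Finally, since $\esh \Rb$ is contractible, the fiber of the pointed map $\esh r$ over the basepoint $(1, 0)^{\esh}$ is equivalent to the based loop space: unfolding, $\fib_{\esh r}((1,0)^{\esh}) \equiv \sum_{u : \esh \Rb}(\esh r(u) = (1, 0)^{\esh})$, and contractibility of $\esh \Rb$ contracts this sum to the identification type $(1, 0)^{\esh} = (1, 0)^{\esh}$, which is $\Omega \esh \Sb^1$. Combining this with the identification of $\esh \fib_r(1, 0)$ gives $\Omega \esh \Sb^1 \simeq \Zb$. I do not anticipate significant obstacles here; the real work is done by Lemma \ref{lem:universal.cover.of.circle} and by the fact that $\esh$-fibrations produce fiber sequences on homotopy types. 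What remains is just to track basepoints carefully through the chain of equivalences.
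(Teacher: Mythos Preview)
Your proposal is correct and follows essentially the same approach as the paper: use Lemma \ref{lem:universal.cover.of.circle} to get that $(\cos,\sin)$ is a $\esh$-fibration, apply $\esh$ to the fiber sequence $\Zb \to \Rb \to \Sb^1$, and then read off $\Omega\esh\Sb^1 \simeq \Zb$ from the resulting fiber sequence $\Zb \to \ast \to \esh\Sb^1$. You are simply more explicit than the paper about basepoints, the identification $\fib_r(1,0)\simeq\Zb$, and why a fiber sequence with contractible middle term identifies the fiber with the loop space of the base.
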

\begin{proof}
Since
$$\Zb \to \Rb \to \Sb^1$$
is a fiber sequence and $(\cos,\, \sin)$ is a $\shape$-fibration,
$$\Zb \to \ast \to \shape\Sb^1$$
is a fiber sequence, showing that $\Omega \shape \Sb^1 \simeq \Zb$.
\end{proof}
\subsection{Hopf Fibrations}\label{subsec:hopf.fibrations}

In the following, let $\Kb$ be the real numbers $\Rb$, the complex numbers
$\Cb$, or the quaternions $\Hb$. We will denote the apartness relation on any of these number systems by $x \# y$; for real numbers this means $|x - y| > 0$, and for the other two number systems this means $\trunc{x - y} > 0$. If $X$ is a set with an apartness relation and $x : X$, we will denote by $X \# \{x\}$ the set of elements $y : X$ with $x \# y$.

\begin{rmk}
In the presence of Shulman's Axiom T of \cite{RealCohesion}, the notions of apartness and non-equality in $\Rb$, $\Cb$, and $\Hb$ coincide (see Theorem 8.32 of that paper). In this case, we could replace all instances of apartness by non-equality. Otherwise, we make no use of Axiom T.
  \end{rmk}

\begin{defn}
  A \emph{line} in $\Kb^{n+1}$ is a proposition $\La : \Kb^{n+1} \to \Prop$ satisfying:
  \begin{enumerate}
  \item There is (merely) an $x \# 0$ element in $\La$ which is apart from $0$.
    \item For any element $x$ in $\La$ and $c : \Kb$, the scaled element
      $c x$ is in $\La$.
    \item For any elements $x$ and $y$ in $\La$, there is a unique $c :
      \Kb$ such that $c x = y$.
  \end{enumerate}
  For a line $\La$, we define $\{\La\} :\equiv \dsum{x : \Kb^{n+1}} \La(x)$ to be
  its extent. We denote the type of lines in $\Kb^{n+1}$ by $\Kb P^n$.
\end{defn}

  Quite obviously, every line is somehow identifiable with $\Kb$.
  \begin{lem}
    Let $\La : \Kb P^n$ be a line. Then
$$\trunc{\{\La\} = \Kb}.$$
  \end{lem}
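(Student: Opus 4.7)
The goal is a propositional equality, so we may freely use propositional elimination. The plan is to construct an honest equivalence $\{\La\} \simeq \Kb$ and then invoke univalence (landing inside the truncation) to conclude $\trunc{\{\La\} = \Kb}$.

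First I would use condition (1) of the definition of a line: since the target $\trunc{\{\La\} = \Kb}$ is a proposition, we may strip the truncation in the merely-inhabited hypothesis and assume we have a genuine nonzero element $x_0 : \Kb^{n+1}$ with $\La(x_0)$. Using condition (2), closure under scalar multiplication, define
\[
\phi : \Kb \to \{\La\}, \qquad \phi(c) :\equiv (c\, x_0,\, \text{by (2)}).
\]
In the other direction, given $(y, p) : \{\La\}$, condition (3) applied with $x \equiv x_0$ and this $y$ furnishes a unique $c : \Kb$ with $c\, x_0 = y$; set $\psi(y, p) :\equiv c$.

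To verify that $\phi$ and $\psi$ are mutually inverse: $\psi(\phi(c)) = c$ is the uniqueness clause of (3) (the unique coefficient taking $x_0$ to $c\, x_0$ is $c$ itself), while $\phi(\psi(y,p))$ equals $(y, p)$ by the existence clause together with the fact that $\La$ is a proposition, so the second component is automatic. Univalence then converts the equivalence $\{\La\} \simeq \Kb$ into an identification $\{\La\} = \Kb$, and we conclude by returning this into the propositional truncation.

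There is no real obstacle; the one thing to be careful about is that condition (3) requires an element of $\La$ on the left, which is exactly why we need to choose $x_0$ merely (via (1)) before we can define either direction of the equivalence. That choice is legitimate only because we are proving a mere proposition, which is why the result is stated in truncated form $\trunc{\{\La\} = \Kb}$ rather than as a canonical identification.
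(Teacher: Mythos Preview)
Your proof is correct and follows essentially the same approach as the paper: both use condition (1) to assume a nonzero $x_0$ in $\La$ (legitimate since the goal is a proposition), and both construct the equivalence $\{\La\} \simeq \Kb$ via scalar multiplication by $x_0$ and the unique coefficient guaranteed by condition (3). The only cosmetic difference is that the paper phrases it as showing one map is surjective and injective, while you explicitly exhibit the inverse and check the round-trips.
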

  \begin{proof}
    Since we are proving a proposition and since there exists a element apart from zero
    on $\La$, we may assume we have such an element $x$. Then the map $y \mapsto
    c$ where $c$ is the unique element of $\Kb$ such that $c x
    = y$ determines a map $\{\La\} \to \Kb$. Since for any $c : \Kb$,
    $c x$ is on $\La$, this map is surjective. It is injective by the
    uniqueness condition (3).
  \end{proof}

  For any $x : \Kb^{n+1} \# \{0\}$, we get the line $\Kb x$ in the
  direction of $x$ defined as
$$\Kb x (y) :\equiv \exists c : \Kb,\, c x = y.$$

 We have a function $\tilde{h} : \Kb^{n+1}\# \{0\} \to \Kb P^n$, sending
  $x$ to $\Kb x$. We refer
  to its restriction $h : \Sb_{\Kb^{n+1}} \to \Kb P^n$ to the unit sphere of
  $\Kb^{n+1}$ as the \emph{generalized Hopf map}.

  Suppose that $\La : \Kb P^n$ is a line and consider the fiber
  $\fib_{\tilde{h}}(\La)$. By definition, this is the type of all elements $x : \Kb^{n+1} -
  \{0\}$ such that $\Kb x = \La$.
  \begin{lem}\label{lem:fiber.of.hopf.fibration}
    For any line $\La : \Kb P^n$,
    $$\fib_{\tilde{h}}(\La) = \{\La\} \# 0$$
    And, as a corollary,
  $$\fib_h(\La) = \dsum{x : \{\La\}} (\trunc{x} = 1)$$
  consists of the elements on the line $\La$ of unit length.
  \end{lem}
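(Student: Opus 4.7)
The plan is to unfold the fiber definition and reduce the identification $\Kb x = \La$ in $\Kb P^n$ to the membership proposition $\La(x)$. Unwinding definitions,
$$\fib_{\tilde{h}}(\La) \equiv \dsum{x : \Kb^{n+1} - \{0\}} (\Kb x = \La),$$
and since lines are subtypes of $\Kb^{n+1}$ (i.e., $\Prop$-valued predicates), an identification $\Kb x = \La$ amounts by propositional and function extensionality to the pointwise logical equivalence $\dprod{y : \Kb^{n+1}} \Kb x(y) \leftrightarrow \La(y)$.

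The key step is to show that for non-zero $x$, the identification $\Kb x = \La$ is equivalent to the single proposition $\La(x)$. The forward direction is immediate: the witness $1 \cdot x = x$ shows $\Kb x(x)$, which transports to $\La(x)$. For the backward direction, assume $\La(x)$ and verify $\Kb x(y) \leftrightarrow \La(y)$ for each $y$. If $\Kb x(y)$ is witnessed by some $c$ with $c x = y$, closure of $\La$ under scaling (condition (2), applied to the member $x$) yields $\La(c x) = \La(y)$. Conversely, if $\La(y)$, then uniqueness of scalars (condition (3), applied to $x$ and $y$) produces the required $c : \Kb$ with $c x = y$, witnessing $\Kb x(y)$. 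Combining, we obtain $\fib_{\tilde{h}}(\La) \simeq \dsum{x : \Kb^{n+1} - \{0\}} \La(x)$, which is the intended reading of $\{\La\}$ here (the origin lies outside the domain of $\tilde{h}$, and is naturally excluded).

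For the corollary, the same pointwise analysis restricts to the unit sphere: unit vectors are automatically non-zero, so
$$\fib_h(\La) \simeq \dsum{x : \Sb_{\Kb^{n+1}}} \La(x) \simeq \dsum{x : \Kb^{n+1}} \La(x) \times (\trunc{x} = 1) \simeq \dsum{x : \{\La\}} (\trunc{x} = 1)$$
after swapping the order of the dependent sum. The main subtlety, and the step worth checking carefully, is the reduction of the identity type $\Kb x = \La$ to the membership $\La(x)$; once that equivalence is in hand, both claims follow by straightforward reshuffling of dependent sums.
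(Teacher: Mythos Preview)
Your proof is correct and follows essentially the same approach as the paper: both reduce the identification $\Kb x = \La$ to the membership proposition $\La(x)$ via conditions (2) and (3), with the forward direction coming from $1 \cdot x = x$. You are more explicit about the propositional and function extensionality step, and you correctly flag the minor discrepancy that $\{\La\}$ as defined includes the origin while the domain of $\tilde{h}$ excludes it --- a point the paper glosses over.
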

  \begin{proof}
    Suppose that $x$ is in $\La$. By property $2$, $c x$ is in $\La$ for
    any $c : \Kb$, and by property $3$, every element of $\La$ may be so
    expressed in a unique way. Therefore, $\Kb x = \La$.

    On the other hand, if $\Kb x = \La$, then in particular $1 \cdot x = x$ is
    in $\La$.
  \end{proof}

  Putting together these two lemmas, we conclude that for all $\La : \Kb P^n$,
  the fiber of $h$ over $\La$ is merely equivalent to the unit sphere of $\Kb$:
  $$\trunc{\fib_h(\La) = \Sb_{\Kb}}.$$
  In particular, their homotopy types are merely equivalent, and so by Theorem
  \ref{thm:characterizing.shape.fibration},
  $$\Sb_{\Kb} \to \Sb_{\Kb^{n+1}} \to \Kb P^n$$
is a $\shape$-fibration.

  Substituting $\Rb$, $\Cb$, and $\Hb$ back in for $\Kb$, we see that:
\begin{thm}
  $\quad$

\begin{itemize}
\item $\Sb^0 \to \Sb^n \to \Rb P^n$ is a $\shape$-fibration.\footnote{We will see in the next
  section that it is a covering map.}
\item $\Sb^1 \to \Sb^{2n + 1} \to \Cb P^n$ is a $\shape$-fibration. This includes the
  original Hopf fibration $\Sb^1 \to \Sb^3 \to \Cb P^1$.
\item $\Sb^3 \to \Sb^{4n + 3} \to \Hb P^n$ is a $\shape$-fibration. This includes the
  quaternionic Hopf fibration $\Sb^3 \to \Sb^7 \to \Hb P^1$.
\end{itemize}
\end{thm}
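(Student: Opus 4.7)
The plan is to observe that the theorem as stated is essentially an immediate instantiation of the general fact already established two paragraphs earlier, namely that
$$\Sb_{\Kb} \to \Sb_{\Kb^{n+1}} \to \Kb P^n$$
is a $\esh$-fibration for each of $\Kb \in \{\Rb, \Cb, \Hb\}$. That general fact was obtained by combining Lemma \ref{lem:fiber.of.hopf.fibration}, which identifies $\fib_h(\La)$ with the unit elements of the line $\{\La\}$ (and hence with $\Sb_{\Kb}$), with Theorem \ref{thm:characterizing.shape.fibration}, taking the crisp discrete type to be $F :\equiv \esh \Sb_{\Kb}$ (which is crisp because $\Sb_{\Kb}$ is, and discrete by construction of $\esh$). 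So what remains for the theorem as stated is purely the bookkeeping of identifying $\Sb_{\Kb}$ and $\Sb_{\Kb^{n+1}}$ with standard real spheres.

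Concretely, I would note that forgetting the $\Kb$-module structure gives an $\Rb$-linear isomorphism $\Kb^{n+1} \cong \Rb^{d(n+1)}$ where $d = \dim_{\Rb}\Kb \in \{1, 2, 4\}$, and under this identification the unit sphere $\Sb_{\Kb^{n+1}}$ of $\Kb^{n+1}$ (with respect to the $\Kb$-norm) coincides with the unit sphere in $\Rb^{d(n+1)}$ in the Euclidean norm, i.e. with $\Sb^{d(n+1)-1}$. Substituting $d = 1, 2, 4$ yields $\Sb^n$, $\Sb^{2n+1}$, and $\Sb^{4n+3}$ respectively. Similarly, $\Sb_{\Kb}$ corresponds to $\Sb^{d-1}$, giving $\Sb^0$, $\Sb^1$, and $\Sb^3$ as fibers.

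Putting these identifications back into the general fiber sequence yields the three statements of the theorem, and the named special cases ($\Sb^1 \to \Sb^3 \to \Cb P^1$ and $\Sb^3 \to \Sb^7 \to \Hb P^1$) fall out by setting $n = 1$. The only point that deserves explicit mention is that the identification of norms above is algebraically trivial: $\lVert x \rVert^2_{\Kb} = x^* x = \sum_i \lvert x_i\rvert^2$ where $\lvert x_i\rvert^2$ is already the sum of the squares of the $d$ real components of $x_i$. There is no real obstacle in this proof — the substantive work was already done in identifying the fiber of the generalized Hopf map and in Theorem \ref{thm:characterizing.shape.fibration}; this theorem is a dimensional corollary.
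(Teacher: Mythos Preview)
Your proposal is correct and matches the paper's approach exactly: the paper also treats this theorem as an immediate instantiation of the general $\Sb_{\Kb} \to \Sb_{\Kb^{n+1}} \to \Kb P^n$ result, simply substituting $\Kb = \Rb, \Cb, \Hb$. If anything, you are more explicit than the paper, which does not bother to spell out the dimension-counting for the sphere identifications.
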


\subsection{A $\shape$-Fibration which is not a Hurewicz Fibration}

In this example we will prove that the projection of the $x$ and $y$-axes onto
the $x$-axis is a $\shape$-fibration. This is a classic example of a
quasi-fibration which is not a Hurewicz fibration, since the $x$-axis cannot be
lifted to a path going through a point $y \neq 0$ in the fiber over $x = 0$.

First, we need a useful and straightforward lemma.
\begin{lem}\label{lem:strong.contraction.implies.contractible}
  Let $X$ be a type with a point $x_0 : X$ and suppose that for every $x : X$,
  we have a path $\gamma_x : \Rb \to X$ with $\gamma_x(0) = x$ and $\gamma_x(1)
  = x_0$. Then $\shape X$ is
  contractible.
\end{lem}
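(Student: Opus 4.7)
The plan is to show $\esh X$ is inhabited by $x_0^{\esh}$ and that every $u : \esh X$ satisfies $u = x_0^{\esh}$. Inhabitation is immediate. For uniqueness, observe that since $\esh X$ is $\esh$-modal, its identity types are $\esh$-modal (identity types of modal types are modal for a higher modality, per the definition given earlier). Hence the family $\lam{u} (u = x_0^{\esh})$ takes values in $\Type_{\esh}$, so the induction principle of $\esh$ lets us assume $u \equiv x^{\esh}$ for some $x : X$.

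It then remains to produce an identification $x^{\esh} = x_0^{\esh}$. First I would form the composite
\[
\Rb \xto{\gamma_x} X \xto{(-)^{\esh}} \esh X.
\]
Since $\esh$ is defined as localization at $\Rb$, the type $\esh X$ is $\Rb$-local, which by definition means the constant-map inclusion $\esh X \to (\Rb \to \esh X)$ is an equivalence; equivalently, $\esh X$ is discrete in the sense of the section on Cohesive HoTT. In particular, the composite $(-)^{\esh} \circ \gamma_x$ is (identifiable with) a constant map, so applying $\ap$ at the endpoints $0$ and $1$ produces an identification $(\gamma_x(0))^{\esh} = (\gamma_x(1))^{\esh}$. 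By the hypotheses $\gamma_x(0) = x$ and $\gamma_x(1) = x_0$, this is the desired identification $x^{\esh} = x_0^{\esh}$.

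The only subtlety is the appeal to ``maps from $\Rb$ into a discrete type are constant,'' but this is essentially the definition of a discrete type once one unpacks what it means for $\esh X$ to be $\esh$-modal, given that $\esh$ is the localization at $\Rb$; no real calculation is required. Everything else is a direct application of $\esh$-induction to the identity-type proposition.
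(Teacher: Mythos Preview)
Your argument is correct. The only phrase that could be tightened is ``applying $\ap$ at the endpoints'': more precisely, since the constant-map inclusion $\esh X \to (\Rb \to \esh X)$ is an equivalence and evaluation at any real is a retraction of it, all evaluation maps $(\Rb \to \esh X) \to \esh X$ are equal, so in particular $\ev_0 = \ev_1$ and hence $(\gamma_x(0))^{\esh} = (\gamma_x(1))^{\esh}$. But this is a cosmetic point.

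Your route differs from the paper's. You work pointwise: use $\esh$-induction on the modal identity-type family $u \mapsto (u = x_0^{\esh})$ to reduce to $x^{\esh} = x_0^{\esh}$, then invoke discreteness of $\esh X$ on the single path $\gamma_x$. The paper instead transposes the whole family into a map $\tilde\gamma : \Rb \to (X \to X)$ with $\tilde\gamma(0) = \id_X$ and $\tilde\gamma(1) = \term{const}_{x_0}$, obtains an identification $\id_X^{\esh} = \term{const}_{x_0}^{\esh}$ in $\esh(X \to X)$, and then pushes this through the factorization $\esh(X \to X) \to (\esh X \to \esh X)$ of the functorial action of $\esh$ to conclude that $\id_{\esh X}$ equals a constant map. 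Your approach is more elementary and direct, leaning on the modal induction principle and the definition of discreteness; the paper's approach avoids $\esh$-induction and instead packages the contraction globally via function spaces and functoriality. Either works fine here.
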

\begin{proof}
  Define the map $\tilde{\gamma} : \Rb \to (X \to X)$ by $\tilde{\gamma}(t)(x) =
  \gamma_x(t)$ and note that
  $\tilde{\gamma}(0) = \id_X$ and $\tilde{\gamma}(1) = \term{const}_{x_0}$, the
  constant map at $x_0$. This gives us an identification $\id_x^{\shape} =
  \term{const}_{x_0}^{\shape}$ in $\shape(X \to X)$. It remains to show that such an
  identification implies that $\shape X$ is contractible.

  The functorial action of $\shape$ gives a map $(X \to X) \to (\shape X \to \shape
  X)$, and since the latter is $\shape$-modal this factors uniquely through
  $\shape(X \to X)$. By construction, the map $\shape(X \to X) \to (\shape X \to \shape
  X)$ sends $\id_X^{\shape}$ to $\shape \id_X$, which equals $\id_{\shape X}$ by
  functoriality. Furthermore, $\term{const}_{x_0}^{\shape}$ gets sent
to $\shape(\term{ const }_{x_0}) = \shape(x_0 \circ !)$ where $! : X \to \ast$ is the terminal morphism. By functoriality, this equals the
  composite $\shape X \xto{\shape !} \shape \ast \xto{\shape x_0} \shape X$, which is the constant map at $x_0^{\shape}$. Therefore, the
  identity of $\shape X$ factors through a constant map, and so $\shape X$ is
  contractible.
\end{proof}

\begin{rmk}
We can think of the function $\gamma_{(-)}(-) : X \to (\Rb \to X)$ of Lemma \ref{lem:strong.contraction.implies.contractible} as a weak form of multiplicative action of $\Rb$ on $X$. If we write $t \cdot x :\equiv \gamma_{x}(t)$, then the assumptions $\gamma_{x}(0) = x_{0}$ and $\gamma_{x}(1) = x$ read as $0 \cdot x = x_{0}$ and $1 \cdot x = x$. Seen this way, Lemma \ref{lem:strong.contraction.implies.contractible} shows us that any type with such a multiplicative action of $\Rb$ --- say, a vector space --- is $\shape$-connected.
  \end{rmk}

  As a corollary, we find that the projection
  $$\{(x,\, y ) : \Rb^2 \mid xy = 0\} \to \{x : \Rb\}$$
  is $\shape$-connected (and is therefore in particular a $\shape$-fibration).
  The fiber of this projection over $x : \Rb$ is $\{y : Y \mid xy = 0\}$, and
  for every $y$ in the fiber we have the path $t \mapsto ty$ from $0$ to $y$.

  \begin{rmk}
    We shouldn't expect all quasi-fibrations to be $\shape$-fibrations. The
    closest analogue of a quasi-fibration in real hohesion would be a map $f : X \to Y$ such that
    for every \emph{crisp}
    $y :: Y$, $\gamma : \shape\fib_f(y) \to \fib_{\shape f}(y^{\shape})$ is an
    equivalence. This is strictly weaker than our definition of $\shape$-fibration; it
    amounts to the claim that the pullback of $f$ along $(-)_{\flat} : \flat Y
    \to Y$ is a $\shape$-fibration.
  \end{rmk}

\section{Homotopy Quotients are $\shape$-Fibrations.}\label{sec:higher.groups}

In this section, we show that the quotient map $X \to X \sslash G$ from a
type $X$ to the homotopy quotient $X \sslash G$ of $X$ by an action of the
$\infty$-group $G$ is a fibration whenever $G$ is crisp. If the
action is crisp and transitive, then for any crisp point $x :: X$, the map $G \to
X$ given by acting on $x$ is a fibration as well. We will then give two more examples of $\shape$-fibrations.

Before we prove these things, we should review the definition of $\infty$-group
and $\infty$-group action. These notions can be found in \cite{HigherGroups}, which develops
the basic theory of $\infty$-groups and proves a stabilization theorem about
them.

\begin{defn}
An $\infty$-group is a type $G$ identified with the loop space $\Omega \BB G$ of
a pointed, $0$-connected type $\BB G$ (called the \emph{delooping} of $G$).
Since singleton types are contractible, the type of $\infty$-groups is equivalent
to the type of pointed, $0$-connected types.
\begin{align*}
  \infty\mbox{-Grp} :&\equiv \dsum{G : \Type} \dsum{\BB G : \Type_{\ast}^{>0}} ( G = \Omega \BB G ) \\
  &\simeq \Type_{\ast}^{>0}.
\end{align*}
For this reason, we will often identify $G$ with $\Omega \BB G$.
\end{defn}

We may think of the elements of $\BB G$ as $G$-torsors, and the point $\pt_{\BB
  G} : \BB G$ as $G$ acting on itself. Indeed, for any group $G$ in
the axiomatic sense (a set equipped with operations satisfying laws), we may
construct its delooping $\BB G$ as the type of $G$-torsors, pointed at $G$.

\begin{defn}
  An \emph{action} of the $\infty$-group $G$ on types is a map $X^{(-)} : \BB G
  \to \Type$. We write $X :\equiv X^{\pt_{\BB G}}$ for the image of the point
  $\pt_{\BB G} : \BB G$.

  Given an element $g : G$, we get an automorphism of $X$ by
  applying $X^{(-)}$ to $g$. That is, given $x : X$, define
  $$g x :\equiv \ap(X^{(-)},\, g) \at x.\footnote{where $\at : (f = g) \to
    \dprod{x : X} fx = gx$ is the function that applies an equality of functions
  at a point.}$$
\end{defn}

We can think of an action $X^{(-)} : \BB G \to \Type$ as an action of $G$ on
$X :\equiv X^{\pt_{\BB G}}$, and we can think of the image $X^t$ of $t : \BB G$
as the action of $G$ on $X$ twisted by the torsor $t$.

\begin{defn}
Given an action $X^{(-)} : \BB G \to \Type$, and $x,\, y : X$, define
\begin{align*}
    x \xmapsto[G]{} y &:\equiv \dsum{g : G} (g x = y) \\
    \type{Orbit}(x) &:\equiv \dsum{y : X} ( x \xmapsto[G]{} y ) \\
    \type{Stab}(x) &:\equiv x \xmapsto[G]{} x
\end{align*}

We say that the action is \emph{free} if for all $x,\, y : X$, $x \xmapsto[G]{}
y$ is a proposition and \emph{transitive} if $\trunc{x \xmapsto[G]{} y}$.
\end{defn}

With this terminology in hand, we can easily define the homotopy quotient of a
type by the action of an $\infty$-group.

\begin{defn}
If $X^{(-)} : \BB G \to \Type$ is an action of the $\infty$-group $G$, then
$$X \sslash G :\equiv \dsum{t : \BB G} X^t$$
is the \emph{homotopy quotient} of $X$ by $G$. The quotient map $[-] : X \to X
\sslash G$ is defined by
$$[x] :\equiv (\pt_{\BB G}, x).$$
\end{defn}

This definition is justified by the computation of identity types in dependent
pair types.
\begin{lem}\label{lem:identifications.in.homotopy.quotient}
Let $ X^{(-)} : \BB G \to \Type$ be an action of the $\infty$-group $G$ and
$x,\, y : X$. Then
$$([x] = [y]) \simeq ( x \xmapsto[G]{} y )$$
\end{lem}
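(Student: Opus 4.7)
The plan is to compute the identity type $[x] = [y]$ in the dependent sum $X \sslash G \equiv \dsum{t : \BB G} X^t$ by appealing to the standard characterization of identity types of $\Sigma$-types, and then to recognize the resulting description as the definition of $x \xmapsto[G]{} y$.

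Recall that for any type family $P : A \to \Type$ and any pairs $(a, b), (a', b') : \dsum{a : A} Pa$, there is an equivalence
$$((a,\, b) = (a',\, b')) \;\simeq\; \dsum{p : a = a'} (\tr_p(b) = b').$$
I would apply this with $A \equiv \BB G$ and $P \equiv X^{(-)}$, to the pairs $[x] \equiv (\pt_{\BB G},\, x)$ and $[y] \equiv (\pt_{\BB G},\, y)$. This gives
$$([x] = [y]) \;\simeq\; \dsum{p : \pt_{\BB G} = \pt_{\BB G}} \big(\tr_p(x) = y\big).$$
By the definition of the $\infty$-group $G$, the base component $\pt_{\BB G} = \pt_{\BB G}$ is exactly $\Omega \BB G \equiv G$, so the sum is indexed by elements $g : G$.

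It remains to identify $\tr_g(x)$ with $gx$ for $g : G$. This is essentially a definitional matter: the action $gx$ was defined as $\ap(X^{(-)},\, g) \at x$, which is the result of applying the type-equality $\ap(X^{(-)},\, g) : X^{\pt_{\BB G}} = X^{\pt_{\BB G}}$ at the point $x : X$, and this coincides with transport of $x$ along $g$ in the family $X^{(-)}$. Substituting, we obtain
$$([x] = [y]) \;\simeq\; \dsum{g : G} (gx = y) \;\equiv\; (x \xmapsto[G]{} y),$$
which is the desired equivalence.

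There is no genuine obstacle here beyond unfolding definitions; the one thing that requires a little care is the translation between the two equivalent descriptions of the action, namely $\ap(X^{(-)},\, g) \at x$ and $\tr_g(x)$. This amounts to noting that for a type family $P : A \to \Type$ and a path $p : a = a'$, the function $\tr_p : Pa \to Pa'$ agrees with the coercion along $\ap(P,\, p) : Pa = Pa'$, which is a standard fact about the interaction of $\ap$ and transport with universe-valued families.
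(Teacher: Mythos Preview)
Your proof is correct and is essentially the same as the paper's: the paper simply cites Theorem 2.7.2 of the HoTT Book (the characterization of identity types in $\Sigma$-types) and says the result follows after expanding the definitions, which is exactly what you have spelled out.
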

\begin{proof}
  This follows immediately from Theorem 2.7.2 of \cite{HoTTBook} after expanding the
  definition of each side.
\end{proof}

Following through the definitions, we get the following long fiber sequence
associated to any $\infty$-group action.

\begin{prop}\label{prop:group.action.fiber.sequence}
For any $\infty$-group $G$, action $X^{(-)} : \BB G \to \Type$, and point $x : X^{}{\pt}$, there is a long fiber sequence ending
\begin{center}
    \begin{tikzcd}
\cdots \arrow[r]  & \type{Stab}(x) \arrow[r] & \type{Orbit}(x) \arrow[lld, out=-30, in=150, "\fst"'] \\
X^{\pt} \arrow[r] & X \sslash G \arrow[r]    & \BB G
\end{tikzcd}
\end{center}
In particular, for all $x : X$, $\type{Orbit}(x) \simeq G$.
\end{prop}

Now we can prove our main theorem for this section.
\begin{thm}\label{thm:group.action.fibrations}
Let $G$ be a crisp $\infty$-group, and $X^{(-)} : \BB G \to \Type$ an action of
$G$. Then the quotient map $[-] : X \to X \sslash G$ is a $\shape$-fibration.

If furthermore $X^{(-)}$ is crisp, then the classifying map $\fst : X \sslash G
\to \BB G$ is a $\shape$-fibration, and if the action is transitive and $x :: X$,
then the map $g \mapsto gx : G \to X$ is a $\shape$-fibration.
\end{thm}
\begin{proof}
  Each fact follows quickly from Proposition
  \ref{prop:group.action.fiber.sequence} and Theorem \ref{thm:characterizing.shape.fibration}.

  Since $\BB G$ is $0$-connected, the map $x \mapsto [x] :\equiv (\pt_{\BB G},
  x)$ is surjective. Since by Proposition \ref{prop:group.action.fiber.sequence}
  the fiber $\fib_{[-]}([x]) \simeq G$ for all $x : X$; in particular for all
  $(t, y) : X \sslash G$ we have a term of $\trunc{\fib_{[-]}((t, y)) = G}$.
  Since $G$ is crisp, we may take the homotopy type of each side to discover (by
  Theorem \ref{thm:characterizing.shape.fibration}) that $[-] : X \to X \sslash
  G$ is a $\shape$-fibration.

 If $X^{(-)}$ is crisp, then so is $X :\equiv X^{\pt_{\BB G}}$ (since the
 $\infty$-group $G$, and hence its delooping $\BB G$ and its basepoint $\pt_{\BB
 G}$ are assumed crisp). Since $\BB G$ is $0$-connected, all the fibers of $\fst
: X \sslash G \to \BB G$ are merely equivalent to $X$, and therefore their
homotopy types are merely equivalent to its homotopy type. So, by Theorem
\ref{thm:characterizing.shape.fibration}, the classifying map $\fst : X \sslash
G \to \BB G$ is a $\shape$-fibration.

Suppose that $x :: X$. If the action is transitive, then for any $y : X$,
$\trunc{\type{Stab}(y) = \type{Stab}(x)}$. Since $x$ is crisp, so is
$\type{Stab}(x)$, so by Theorem $\ref{thm:characterizing.shape.fibration}$ this
proves that the map $g \mapsto g x : G \to X$ (whose fiber over $y : X$ is
$\type{Stab}(y)$ by Proposition \ref{prop:group.action.fiber.sequence}) is a $\shape$-fibration.
\end{proof}

We can use Theorem \ref{thm:group.action.fibrations} to give two more examples of $\shape$-fibrations.

\subsection{$\textbf{SO}(n) \to \textbf{SO}(n+1) \to \Sb^n$}\label{subsec:rotation.of.spheres}

We will first construct a delooping $\BB \textbf{SO}(n)$ of the special
orthogonal group, and then define the action of $\textbf{SO}(n+1)$ on the
$n$-sphere as a map $\BB \textbf{SO}(n+1) \to \Type$ (with $n \geq 1$). We will prove that the fiber of the map $\textbf{SO}(n + 1) \to \Sb^n$
given by acting on the base point has fiber $\textbf{SO}(n)$. Finally, by
Theorem \ref{thm:group.action.fibrations}, we will conclude that the map
$\textbf{SO}(n + 1) \to \Sb^n$ is a $\shape$-fibration.

\begin{defn}
An \emph{orientation} on a normed real $n$-dimensional vector space $V$ is a
unit length element of its exterior power $\Lambda^n V$, equipped with the norm
$$\langle v_1 \wedge \cdots \wedge v_n,\, w_1 \wedge \cdots \wedge w_n \rangle
:= \det [\langle v_i,\, w_j \rangle_V]$$

We define $\BB \textbf{SO}(n)$ to be the type of normed real $n$-dimensional vector spaces $V$ equipped
with an orientation that are merely isomorphic to
$\Rb^n$ with its standard norm and orientation. We point $\BB \textbf{SO}(n)$ at
$\Rb^n$ with its standard norm and orientation.
\end{defn}

We need to justify this definition of $\BB \textbf{SO}(n)$.
\begin{lem}
  $\Omega \BB \textbf{SO}(n) = \textbf{SO}(n)$.
\end{lem}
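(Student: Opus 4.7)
The plan is to recognize $\BB \textbf{SO}(n)$ as a $\BAut$-type and then apply the structure identity principle. By construction, $\BB \textbf{SO}(n)$ is exactly $\BAut_{V^o_n}(\Rb^n)$, where $V^o_n$ denotes the type of normed real $n$-dimensional vector spaces equipped with an orientation. Indeed, unfolding the definition we have
\[
\BB \textbf{SO}(n) \;\equiv\; \dsum{(V,\|\cdot\|,\omega) : V^o_n} \trunc{(\Rb^n,\|\cdot\|_{\text{std}},\omega_{\text{std}}) = (V,\|\cdot\|,\omega)},
\]
pointed at the standard oriented Euclidean $\Rb^n$.

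The first step is the standard calculation that for any pointed type of the form $\BAut_X(x)$, the loop space at $(x, \trunc{\refl})$ is equivalent to the automorphism type $(x = x)$ in $X$. This follows because identifications in the total space $\dsum{y : X} \trunc{x = y}$ reduce, over the contractible base of propositional truncations, to identifications in $X$. Applying this with $X \equiv V^o_n$, we get
\[
\Omega \BB \textbf{SO}(n) \;\simeq\; \big( (\Rb^n,\|\cdot\|_{\text{std}},\omega_{\text{std}}) = (\Rb^n,\|\cdot\|_{\text{std}},\omega_{\text{std}})\big)_{V^o_n}.
\]

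The second, substantive, step is to apply univalence together with the structure identity principle (\cite{HoTTBook}, Section 9.8). An identification in $V^o_n$ between two oriented normed vector spaces unpacks, via univalence for the underlying vector space and transport of the structures, as a linear isomorphism $\varphi : \Rb^n \simeq \Rb^n$ which preserves the norm (i.e., is orthogonal) and preserves the orientation (i.e., $\Lambda^n \varphi$ fixes the standard unit volume form). Since preserving the norm forces $\det \varphi = \pm 1$, and preserving the orientation forces $\det \varphi = 1$, this type is precisely the special orthogonal group $\textbf{SO}(n)$.

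The main obstacle is the bookkeeping for the structure identity principle: one has to assemble ``normed oriented vector space'' as a standard notion of structure on the category of sets (or types), verify that it is axiomatizable in the required sense, and check that the induced notion of homomorphism between two copies of $(\Rb^n,\|\cdot\|_{\text{std}},\omega_{\text{std}})$ matches the usual definition of an element of $\textbf{SO}(n)$. Once this is set up, both steps are routine, and the equivalence $\Omega \BB \textbf{SO}(n) \simeq \textbf{SO}(n)$ follows by composing the two displayed equivalences above.
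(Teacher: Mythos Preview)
Your proposal is correct and follows essentially the same approach as the paper: both identify $\Omega\BB\textbf{SO}(n)$ with the type of self-identifications of the standard oriented normed $\Rb^n$ and then unwind this via univalence/structure identity to norm- and orientation-preserving linear automorphisms, i.e., matrices in $\textbf{SO}(n)$. The paper's proof is simply terser, leaving the $\BAut$ loop-space computation and the structure identity bookkeeping implicit.
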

\begin{proof}
A linear automorphism of $\Rb^n$ which preserves the norm is given by an
orthogonal matrix. If this furthermore preserves the standard orientation on
$\Rb$, that means its $n^{\text{th}}$-exterior power is the identity; but this
is given by multiplying by its determinant, so its determinant must be $1$.
\end{proof}

We can now define the action of $\textbf{SO}(n+1)$ on the $n$-sphere $\Sb^n$.
\begin{defn}
For $(V, \langle -,\,- \rangle)$ a normed vector space, let $\Sb_V :\equiv \{v : V \mid \trunc{v} = 1\}$
be its unit sphere. Note that $\Sb_{\Rb^n} \equiv \Sb^{n-1}$ by definition.

The map $(V, \langle -,\, - \rangle, \omega) \mapsto \Sb_V : \BB
\textbf{SO}(n+1) \to \Type$ induces the action of $\textbf{SO}(n+1)$ on $\Sb^n$.
\end{defn}

\begin{lem}
The action of $\textbf{SO}(n+1)$ on $\Sb^n$ is transitive, and the stabilizer of
the basepoint $1 : \Sb^n$ may be identified with $\textbf{SO}(n)$.
\end{lem}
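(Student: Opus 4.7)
The plan is to reduce both assertions to concrete linear-algebraic statements inside $\Rb^{n+1}$. I will take for granted the higher-group-theoretic reformulations introduced in Section \ref{sec:higher.groups}: an action $X : \BB G \to \Type$ is transitive precisely when the total space $\dsum{v : \BB G} X(v)$ is merely $0$-connected at the chosen basepoint, and the stabilizer of $x : X(\term{pt})$ has delooping the connected component of this total space containing $(\term{pt}, x)$. In our situation the total space is $\dsum{V : \BB \textbf{SO}(n+1)} \Sb_V$, pointed at $(\Rb^{n+1}, 1)$.

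For transitivity, given any $(V, v)$ in this total space we must merely identify it with the basepoint. Since we are proving a proposition we may assume outright that $V = \Rb^{n+1}$ with its standard norm and orientation, reducing the question to showing that every unit vector $v : \Sb^n$ lies in the $\textbf{SO}(n+1)$-orbit of $e_1$. First I would use Gram--Schmidt to extend $v$ to an orthonormal basis $(v, v_2, \ldots, v_{n+1})$ of $\Rb^{n+1}$, swapping the last two vectors if needed so that the orientation matches the standard one; the linear map sending $(e_1, \ldots, e_{n+1})$ to this basis is then the required element of $\textbf{SO}(n+1)$.

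For the stabilizer it suffices to construct a pointed equivalence
\[
\phi : \BB \textbf{SO}(n) \xrightarrow{\sim} \Bigl(\dsum{V : \BB \textbf{SO}(n+1)} \Sb_V,\, (\Rb^{n+1}, 1)\Bigr)
\]
and pass to loop spaces. Define $\phi(W) :\equiv (\Rb \oplus W,\, (1, 0))$, equipping $\Rb \oplus W$ with the product norm and with the orientation $e_1 \wedge \omega_W$. The inverse sends $(V, v)$ to the orthogonal complement $v^\perp$, endowed with the restricted norm and with the unique orientation $\omega_{v^\perp}$ characterized by $v \wedge \omega_{v^\perp} = \omega_V$. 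Both composites are naturally isomorphic to the identity via the orthogonal decomposition $V = \Rb v \oplus v^\perp$, and the basepoint is preserved since $\phi(\Rb^n) = (\Rb \oplus \Rb^n,\, (1, 0)) = (\Rb^{n+1}, e_1)$.

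The main obstacle is the orientation bookkeeping. One must verify that the assignment $W \mapsto e_1 \wedge \omega_W$ is consistent under merely-equivalences inside $\BB \textbf{SO}(n)$, that the condition $v \wedge \omega_{v^\perp} = \omega_V$ really pins down a unique orientation on $v^\perp$, and that the resulting composites preserve orientation data. This matters because a sign error would compute the stabilizer as the full orthogonal group $\mathbf{O}(n)$ instead of $\textbf{SO}(n)$ --- distinguishing these is the sole purpose of the orientation datum in the definition of $\BB \textbf{SO}$. Apart from this bookkeeping the argument is routine.
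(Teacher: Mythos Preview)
Your transitivity argument is essentially identical to the paper's: reduce to $V = \Rb^{n+1}$, extend $v$ to an orthonormal basis via Gram--Schmidt, and swap two vectors if necessary to correct the sign of the determinant.

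For the stabilizer you take a genuinely different route. The paper argues directly at the group level: an element of $\textbf{SO}(n+1)$ fixing $e_1$ is an orthogonal matrix whose first column is $(1,0,\ldots,0)^T$; orthogonality then forces the first row to be $(1,0,\ldots,0)$ as well, so the matrix is block-diagonal with lower-right block in $\textbf{SO}(n)$. You instead work at the level of deloopings, building a pointed equivalence $\BB\textbf{SO}(n) \simeq \Sb^n \sslash \textbf{SO}(n+1)$ via $W \mapsto (\Rb \oplus W,\, e_1)$ with inverse $(V,v) \mapsto v^{\perp}$, and then loop. Your characterizations of transitivity and of the stabilizer in terms of the homotopy quotient are correct consequences of Lemma~\ref{lem:identifications.in.homotopy.quotient} and the surjectivity of $[-]$, so the strategy is sound; the orientation bookkeeping you flag is real but routine once one fixes the convention $v \wedge \omega_{v^{\perp}} = \omega_V$. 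The paper's matrix argument is shorter and entirely elementary, but it leaves implicit the passage between the higher-group definitions of Section~\ref{sec:higher.groups} and the concrete matrix group; your approach pays the cost of the orientation checks up front but stays native to the $\BB G$ formalism and in fact yields the stronger statement that the stabilizer is identified with $\textbf{SO}(n)$ \emph{as a higher group}, not merely as a type.
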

\begin{proof}
  For $v : \Sb^n$, consider $v$ as a unit vector in $\Rb^{n+1}$. Then $v$ may be
  merely extended to a orthonormal basis of $\Rb^{n+1}$ by the Gram-Schmidt
  process. The resulting matrix will have determinant
  either $1$ or $-1$, but since $\{-1, 1\}$ has decidable equality, we can choose
  to swap two of these basis vectors to get a special orthogonal matrix that
  sends $(1, 0, \ldots, 0) : \Sb^n$ to $v$.

  The stabilizer of the basepoint $1 : \Sb^n$ may be identified with the special
  orthogonal matrices whose first column has its first entry $1$ and all other
  entries $0$. Since the matrix is orthogonal, there can be nothing but $0$s in
  the first row as well. Therefore, the bottom minor given by removing the first
  row and first column is also special orthogonal, and this gives an
  identification of the stabilizer with $\textbf{SO}(n)$.
\end{proof}

Finally, by Theorem \ref{thm:group.action.fibrations}, we may conclude that
$$\textbf{SO}(n) \to \textbf{SO}(n+1) \to \Sb^n$$
is a $\shape$-fibration.

\subsection{A $\shape$-fibration over a $1$-type}\label{sec:wedge.fibration.example}

So far we have only seen $\shape$-fibrations over sets. But with Cohesive HoTT, we
can work directly with topological stacks as well. In this example, we will see
an example of a $\shape$-fibration over a $1$-type --- a stacky version of the real numbers.

Often, a map will fail to be a fibration at a few points because it is ramified
there. For example, the map $\Rb \vee \Rb \to \Rb$ induced by the identity maps
\[
\begin{tikzcd}
\ast \arrow[r, "0"] \arrow[d, "0"'] & \Rb \arrow[d] \arrow[ddr, bend left, "\id"] & \\
\Rb \arrow[r] \arrow[drr, bend right, "\id"']& \Rb \vee \Rb \arrow[dr, dashed] & \\
& & \Rb
\end{tikzcd}
\]
is almost a $\shape$-fibration (indeed, almost a covering), but it is ramified over $0$.
However, when such a ``ramified fibration'' appears as the quotient of a group
action, it can be rectified into a $\shape$-fibration by replacing the base by the
homotopy quotient.

In the above example, note that we can also see this map as the quotient
$$\Rb \vee \Rb \to \Rb \vee \Rb / C_2$$
of the action of the cyclic group $C_2$ of order 2 on $\Rb \vee \Rb$ given by
permuting the factors. The homotopy quotient $\Rb \vee \Rb \sslash C_2$ will be
a stacky version of the reals where $0$ has automorphism group $C_2$. Now the
fiber over $0$ consists of both a point over $0$ (of which there is just one),
together with an identification of its image with $0$, of which there are now
two. So the fibers have become locally constant; they are in fact merely
equivalent to the group $C_2$.

This can be made formal by appealing to the upcoming Theorem
\ref{thm:group.action.fibrations}. We will construct the example above.

\begin{defn}
Let $\BB C_2$ be the type of $2$-element sets pointed at $\{0,\,1\}$, noting that $C_2 = \Omega \BB C_2$.

For $T : \BB C_2$, let $X^T$ be the cofiber of $(\id, 0) : T \to T \times
\Rb$. Note that $X :\equiv X^{\pt_{\BB C_2}}$ may be identified with $\Rb \vee
\Rb$. This gives the action of $C_2$ on $\Rb \vee \Rb$ by permuting the factors.
\end{defn}

Theorem \ref{thm:group.action.fibrations} then tells us that
$$C_2 \to \Rb \vee \Rb \to \Rb \vee \Rb \sslash C_2$$
is a $\shape$-fibration. Explicitly $\Rb \vee \Rb \sslash C_2$ is the type of
pairs $\dsum{T : \BB C_2} X^T$ of 2-element sets $T$ and elements of the cofiber
of the inclusion $(\id, 0): T \to T \times \Rb$.

A map can be a ``ramified fibration'' even if each fiber\footnote{That is, over
  each \emph{crisp} point.} is the same. An example of this is the M{\:o}bius
band given by rotating $[-1,1]$ around a circle with a half turn mapping down
onto $[-1,1] / \term{sgn}$ sending each longitudinal circle to the set of points
it intersects in a fixed copy of $[-1,1]$ in the M{\:o}bius band.

Each fiber of this map is a circle, but as one travels from $[1]$ to $[0]$ in
$[-1,1] / \term{sgn}$, the fibers double over. So while each fiber is the same,
they do not have a well defined transport along paths as a $\shape$-fibration
would. The trick here is the word ``each''; it is true that every fiber is a
circle over each \emph{crisp} point of $[-1,1] / \term{sgn}$, but not over a generic
point as Theorem \ref{thm:characterizing.shape.fibration} requires.

This ramification can be fixed by considering the map to $[-1,1] \sslash
\term{sgn}$, a stacky version of $[0,1]$ in which $0$ has an automorphism group
$C_2$.

\section{The Shape of a Crisp $n$-Connected Type is $n$-Connected}\label{sec:connectedness}

One might expect that if $X$ is $\trunc{-}_n$-connected, then its homotopy type
$\shape X$ would also be $\trunc{-}_n$-connected. While we do not know whether
this is true in general, we can prove it for crisp types $X :: \Type$. To do
this, we need to recall a bit of the theory of \emph{separated} types for a
modality from \cite{LocalizationInHoTT}.

\begin{defn}
  A type $X$ is ${\modal}$-separated if for all $x,\, y : X$, the type of
  identifications $x = y$ is ${\modal}$-modal. By Theorem 2.26 of \cite{LocalizationInHoTT}, the ${\modal}$-separated types
  form a modality ${\modal}^{\prime}$, and we may inductively define
  \begin{align*}
    {\modal}^{(0)} &:\equiv {\modal} \\
    {\modal}^{(n+1)} &:\equiv {\modal}^{(n)\prime}
  \end{align*}
\end{defn}

We now need to import a few lemmas from \cite{LocalizationInHoTT}.
\begin{lem}\label{lem:separated.modality.units}
Any ${\modal}$-modal type is ${\modal}^{(n)}$-modal, and the canonical factorization
${\modal}^{(n)} X \to {\modal} X$ of the ${\modal}$-unit through the ${\modal}^{(n)}$-unit
is a ${\modal}$-unit.
\end{lem}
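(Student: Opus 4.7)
The plan is to establish the two claims in turn. For the first claim — that every $\bang$-modal type is $\bang^{(n)}$-modal — I would proceed by induction on $n$. The base case $n=0$ is immediate since $\bang^{(0)} \equiv \bang$. For the inductive step, suppose every $\bang$-modal type is $\bang^{(n)}$-modal, and let $X$ be $\bang$-modal. I need to show $X$ is $\bang^{(n+1)} \equiv \bang^{(n)\prime}$-modal, i.e. $\bang^{(n)}$-separated. The key input is the final clause of the higher modality definition: for any $u,v : \bang A$, the map $(-)^{\bang}$ on $u = v$ is an equivalence, so identity types of $\bang$-modal types are $\bang$-modal. Applied to $X$, this says $x = y$ is $\bang$-modal for all $x,y : X$. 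By the inductive hypothesis, $x = y$ is then $\bang^{(n)}$-modal, which is exactly the condition that $X$ be $\bang^{(n)}$-separated.

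For the second claim, I would use the universal-property characterization of $\bang$-units mentioned just after the definition of a higher modality: a map $\eta : X \to K$ into a $\bang$-modal type is a $\bang$-unit if and only if precomposition by $\eta$ gives an equivalence $(K \to Z) \simeq (X \to Z)$ for every $\bang$-modal $Z$. The target $\bang X$ is of course $\bang$-modal, so I only need to check the universal property of the canonical factorization $\iota : \bang^{(n)}X \to \bang X$. Given any $\bang$-modal $Z$, the map $(\bang X \to Z) \to (X \to Z)$ is an equivalence by definition of the $\bang$-unit. On the other hand, by the first claim $Z$ is $\bang^{(n)}$-modal, so the map $(\bang^{(n)} X \to Z) \to (X \to Z)$ is likewise an equivalence. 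Since $\iota$ was constructed precisely so that the composite $X \to \bang^{(n)} X \xrightarrow{\iota} \bang X$ is the $\bang$-unit, the triangle of precomposition maps commutes, and by two-out-of-three precomposition along $\iota$ is an equivalence, establishing that $\iota$ is a $\bang$-unit.

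No step here looks like it will be a serious obstacle; the only subtlety is making sure the first claim is used in exactly the right form so that $Z$ in the second claim is automatically $\bang^{(n)}$-modal and the two-out-of-three argument goes through. The inductive step of the first claim is the lynchpin, and it turns entirely on the closure of $\bang$-modal types under identity types coming from the higher-modality axioms.
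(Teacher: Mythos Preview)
Your proof is correct and essentially identical to the paper's. The paper proves the first claim with the same inductive argument (compressed to ``identification types in $\bang X$ are $\bang$-modal, so $\bang X$ is $\bang'$-modal, and so on''), and for the second claim invokes Lemma~\ref{lem:submodality.connecting.unit}, whose proof is exactly the two-out-of-three precomposition argument you wrote out inline.
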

\begin{proof}
By hypothesis, the identification types in ${\modal} X$ are ${\modal}$-modal, so
that ${\modal} X$ is ${\modal}'$-modal, and so on. The proves the first statement.

The second statement now follows by Lemma \ref{lem:submodality.connecting.unit}.
\end{proof}

\begin{lem}\label{lem:derived.modality.loop}
  For any modality ${\modal}$ and any pointed type $X$, there is an equivalence
  $$ \Omega^n{\modal}^{(n)} X \simeq {\modal} \Omega^n X$$
\end{lem}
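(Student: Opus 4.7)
The plan is to proceed by induction on $n$, where the crucial ingredient is the following ``level one'' case: for every modality $\bang$ and every pointed type $X$,
$$\Omega \bang' X \simeq \bang \Omega X. \qquad (\star)$$
The base case $n=0$ is immediate, since $\Omega^0 \bang^{(0)} X \equiv \bang X \equiv \bang \Omega^0 X$.

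For the inductive step, I assume that $\Omega^n \bang^{(n)} Y \simeq \bang \Omega^n Y$ holds for every pointed type $Y$. Since $\bang^{(n+1)} \equiv (\bang^{(n)})'$ by definition, applying $(\star)$ with $\bang$ replaced by $\bang^{(n)}$ yields $\Omega \bang^{(n+1)} X \simeq \bang^{(n)} \Omega X$. Applying $\Omega^n$ to both sides and then the inductive hypothesis to the pointed type $\Omega X$, I conclude
$$\Omega^{n+1} \bang^{(n+1)} X \,\simeq\, \Omega^n \bang^{(n)} \Omega X \,\simeq\, \bang \Omega^n \Omega X \,=\, \bang \Omega^{n+1} X,$$
which closes the induction. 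The only thing being asked of the modality here is that $\bang^{(n+1)}$ be the separation of $\bang^{(n)}$, so the structure of the argument is robust.

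The hard part is establishing $(\star)$. Since $\bang' X$ is $\bang$-separated by definition, its loop space $\Omega \bang' X$ is $\bang$-modal; thus the map $\Omega (-)^{\bang'} : \Omega X \to \Omega \bang' X$ factors uniquely through the $\bang$-unit of $\Omega X$, producing a canonical comparison map $\bang \Omega X \to \Omega \bang' X$. Showing this is an equivalence amounts to showing that $\Omega(-)^{\bang'}$ is itself a $\bang$-unit, i.e.\ that it is $\bang$-connected. I would deduce this from the long fiber sequence of the $\bang'$-unit: because $(-)^{\bang'}$ is $\bang'$-connected, its fiber $F$ satisfies $\bang' F \simeq \One$, and the fiber of $\Omega(-)^{\bang'}$ over $\refl$ is $\Omega F$, so the goal reduces to the implication ``$\bang' F$ contractible $\Rightarrow$ $\bang \Omega F$ contractible''.

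The cleanest way to verify this last implication is to use the localization presentation: if $\bang$ is localization at a family $\mathcal{L}$, then $\bang'$ is localization at $\Sigma\mathcal{L}$ (see Remark 2.16 of \cite{LocalizationInHoTT}), and a direct computation with the loop-suspension adjunction gives $(\star)$. I anticipate citing this as an existing lemma from \cite{LocalizationInHoTT}, as the rest of the argument then reduces to the routine induction described above.
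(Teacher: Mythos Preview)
Your proof is correct and matches the paper's approach exactly: the paper's proof is the single sentence ``This follows immediately from Proposition 2.27 of \cite{LocalizationInHoTT} by induction,'' and that Proposition 2.27 is precisely your statement $(\star)$, after which your induction is the intended one. Your intermediate sketch toward $(\star)$ only handles the fiber of $\Omega(-)^{\bang'}$ over $\refl$ rather than over an arbitrary loop, but since you ultimately defer to \cite{LocalizationInHoTT} for $(\star)$ this does not affect the argument.
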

\begin{proof}
This follows immediately from Proposition 2.27 of \cite{LocalizationInHoTT} by induction.
\end{proof}

\begin{lem}
Suppose that ${\modal}$ is given by localization at a map $A \to \ast$. Then
${\modal}^{(n)}$ is given by localization at $\Sigma^n A \to \ast$.
\end{lem}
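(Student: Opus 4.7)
The plan is to proceed by induction on $n$, using the key observation (already cited in the preceding paragraphs from \cite{LocalizationInHoTT}, Lemma 2.15) that if a modality $\bigcirc$ is given by localization at a type $B$, then its separated modality $\bigcirc'$ is given by localization at the suspension $\Sigma B$.

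The base case $n = 0$ is immediate: $\bang^{(0)} \equiv \bang$ is localization at $A = \Sigma^0 A$ by hypothesis (taking $\Sigma^0 A :\equiv A$). For the inductive step, assume that $\bang^{(n)}$ is localization at $\Sigma^n A$. By definition $\bang^{(n+1)} \equiv (\bang^{(n)})'$ is the separated modality of $\bang^{(n)}$. Applying Lemma 2.15 of \cite{LocalizationInHoTT} to the modality $\bang^{(n)}$ (which is a localization at $\Sigma^n A$ by inductive hypothesis), we see that $\bang^{(n+1)}$ is localization at $\Sigma(\Sigma^n A) \equiv \Sigma^{n+1} A$, as desired.

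Almost nothing needs to be grinded out here: the entire content is packaged in the cited lemma, which characterizes separated types as null types for the suspension. The only thing to be mildly careful about is the convention $\Sigma^0 A \equiv A$ so that the base case matches the hypothesis on $\bang$; with this convention in place, the induction runs without incident. There is no main obstacle --- the proof is a one-line induction once Lemma 2.15 of \cite{LocalizationInHoTT} is in hand.
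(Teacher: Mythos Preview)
Your proof is correct and is exactly the approach the paper takes: the paper's own proof reads simply ``This follows immediately from Lemma 2.15 of \cite{LocalizationInHoTT} by induction,'' and you have merely spelled out that induction in detail.
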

\begin{proof}
This follows immediately from Lemma 2.15 of \cite{LocalizationInHoTT} by induction.
\end{proof}

As a corollary, we find that the $n$-fold locally discrete modalities
$\shape^{(n)}$ are given by localization at $\Sigma^n \Rb \to \ast$. Since $\Rb$
is inhabited, as a corollary we find that $\shape^{(n)}$ preserves $n$-connected types.
\begin{lem}\label{lem:X.n.connected.esh.n.X.n.connected}
Suppose that $-1 \leq k \leq n$. If $X$ is $k$-connected, then $\shape^{(n)} X$ is $k$-connected.
\end{lem}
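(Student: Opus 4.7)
The plan is to proceed by induction on $n$, treating $k \in \{-1, 0\}$ by direct arguments valid for every $n$ and reducing the higher cases $k \geq 1$ to the inductive hypothesis via a loop-space computation.

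For $k = -1$, the modal unit applied to any inhabitant of $X$ exhibits $\esh^{(n)} X$ as inhabited. For $k = 0$, $\esh^{(n)} X$ is inhabited for the same reason, and to establish $\trunc{u = v}_{-1}$ for $u, v : \esh^{(n)} X$ I would apply $\esh^{(n)}$-induction on both variables. This reduction is legitimate because propositions are $\esh^{(n)}$-modal at every level: by Proposition 8.8 of \cite{RealCohesion} every proposition is $\esh$-modal, and by Lemma \ref{lem:separated.modality.units} every $\esh$-modal type is $\esh^{(n)}$-modal. Once $u \equiv x^{\esh^{(n)}}$ and $v \equiv y^{\esh^{(n)}}$, the $0$-connectedness of $X$ merely supplies a path $p : x = y$, and $\ap\,(-)^{\esh^{(n)}}\, p$ closes the case.

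For the main inductive step, assume the lemma at level $n - 1$ and let $k$ satisfy $1 \leq k \leq n$. Since $X$ is $k$-connected and we are proving a proposition about $\esh^{(n)} X$, we may merely choose an actual basepoint $x_0 : X$ and work pointedly. The previous case gives that $\esh^{(n)} X$ is $0$-connected, so it suffices to prove that $\Omega_{x_0^{\esh^{(n)}}}\esh^{(n)} X$ is $(k-1)$-connected. The key input here is Proposition 2.27 of \cite{LocalizationInHoTT}, the single-step equivalence underlying Lemma \ref{lem:derived.modality.loop}, which supplies
$$\Omega \esh^{(n)} X \simeq \esh^{(n-1)} \Omega X.$$
Since $X$ is pointed and $k$-connected, $\Omega X$ is $(k-1)$-connected, and because $-1 \leq k - 1 \leq n - 1$ the inductive hypothesis applies and yields that $\esh^{(n-1)} \Omega X$ is $(k-1)$-connected, which completes the argument.

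The main obstacle I anticipate is not the spine of the reasoning but two technical verifications. First, I want to be careful that Proposition 2.27 of \cite{LocalizationInHoTT} delivers exactly the pointed equivalence $\Omega \bang^{(n)} X \simeq \bang^{(n-1)} \Omega X$ in the naturality we need, rather than a close variant requiring extra bookkeeping around basepoints and the factorization $\esh^{(n)} X \to \esh^{(n-1)} X$. Second, I want to confirm that the ``merely pick a basepoint'' step is harmless: we are proving the proposition $\term{is}(k$\text{-connected}$)(\esh^{(n)} X)$, and the identification $\Omega_{x_0^{\esh^{(n)}}}\esh^{(n)} X \simeq \esh^{(n-1)} \Omega_{x_0} X$ does genuinely depend on the choice of $x_0$, but since we extract only the connectedness of the loop space the choice is immaterial. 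Neither difficulty looks serious, but both are the sort of small detail that can be overlooked.
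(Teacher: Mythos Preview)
Your argument is correct, but the paper takes a quite different and shorter route. The paper invokes the preceding lemma that $\esh^{(n)}$ is localization at $\Sigma^n \Rb \to \ast$, observes that $\Sigma^n \Rb$ is $(n-1)$-connected (hence $(k-1)$-connected for $k \leq n$) since $\Rb$ is inhabited, and then appeals directly to Corollary~3.13 of \cite{LocalizationInHoTT}, which says that localization at a $(k-1)$-connected map preserves $k$-connected types. That is the entire proof.

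Your approach avoids the explicit presentation of $\esh^{(n)}$ as a suspension-localization and instead uses only the recursive structure $\esh^{(n)} = (\esh^{(n-1)})'$ together with the single-step loop equivalence $\Omega\,\bang' X \simeq \bang\,\Omega X$. The cost is that you must handle $k \in \{-1,0\}$ by hand and run an induction on $n$; the benefit is that the argument works verbatim for any modality $\bang$ for which propositions are $\bang$-modal, without knowing that the derived modalities $\bang^{(n)}$ arise as localizations at any particular family. The paper's proof is cleaner here because the localization description is already available, but your method would transfer to settings where it is not. The two technical worries you flag (the precise form of Proposition~2.27 and the harmlessness of merely choosing a basepoint) are both fine: the paper itself derives Lemma~\ref{lem:derived.modality.loop} from that proposition by exactly this one-step induction, and $k$-connectedness is a proposition so the mere basepoint suffices.
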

\begin{proof}
This follows immedately from Corollary 3.13 of \cite{LocalizationInHoTT} by
induction. In particular, since $\Rb$ is $(-1)$-connected, by Theorem 8.2.1 of
\cite{HoTTBook} $\Sigma^n \Rb$ is $(n-1)$-connected and so $(k-1)$-connected. Corollary 3.13 of \cite{LocalizationInHoTT} then applies
to the map $\Sigma^n \Rb \to \ast$.
\end{proof}

We are now ready to prove that $\shape$ preserves $n$-connected crisp types.
\begin{thm}\label{thm:shape.preserves.crisp.connectedness}
Let $X :: \Type$ be a crisp, $n$-connected type for $n \geq -1$. Then the canonical map
$\shape^{(n + 1)} X \to \shape X$ induced by factoring the $\shape$-unit through the
$\shape^{(n + 1)}$-unit is an equivalence, and so in particular $\shape X$ is $n$-connected.
\end{thm}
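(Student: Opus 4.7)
The plan is to reduce the theorem to showing that $\esh^{(n+1)} X$ is itself $\esh$-modal. Once this is established, the canonical map $\esh^{(n+1)} X \to \esh X$ is an equivalence, because Lemma \ref{lem:separated.modality.units} identifies it with a $\esh$-unit, and then Lemma \ref{lem:X.n.connected.esh.n.X.n.connected} applied with $k = n$ yields that $\esh X \simeq \esh^{(n+1)} X$ is $n$-connected, which is the ``in particular'' part of the theorem.

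To show $Y \equiv \esh^{(n+1)} X$ is $\esh$-modal, I collect two pieces of data. First, by Lemma \ref{lem:X.n.connected.esh.n.X.n.connected}, the type $Y$ is $n$-connected. Second, by Lemma \ref{lem:derived.modality.loop},
\[
\Omega^{n+1} Y \;\simeq\; \esh \Omega^{n+1} X,
\]
so $\Omega^{n+1} Y$ is a shape, hence $\esh$-modal. Moreover, crispness of $X$ passes through iterated loops to $\Omega^{n+1} X$, so that $\Omega^{n+1} Y$ is a \emph{crisp} discrete type.

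The remaining step is to argue that an $n$-connected pointed type $Y$ with crisp discrete $(n+1)$-fold loop space is itself discrete. The idea is to identify $Y$ with the iterated delooping $\BB^{n+1}(\Omega^{n+1} Y)$ (valid because $Y$ is $n$-connected), and then to apply Theorem \ref{thm:locally.crisply.discrete.BAut.discrete} iteratively: the delooping $\BB G$ of a crisp discrete higher group $G$ is a $\BAut$ in a locally discrete universe of (pointed, grouped) discrete types and is therefore discrete; iterating $n+1$ times gives discreteness of $\BB^{n+1}(\Omega^{n+1} Y) \simeq Y$.

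The main obstacle is making this iterated delooping argument fully rigorous within the type theory: one must justify both that $\Omega^{n+1} Y$ admits a coherent $(n+1)$-fold delooping as an $E_{n+1}$-group (which it does as an $(n+1)$-fold loop space, but this requires care to formalize) and that each intermediate classifying type sits in a locally discrete universe so that Theorem \ref{thm:locally.crisply.discrete.BAut.discrete} applies at every stage. A cleaner alternative may be a direct induction on $n$: the base $n = -1$ is trivial since $\esh^{(0)} = \esh$, and in the inductive step one uses the induction hypothesis to conclude $\esh^{(n)} X \simeq \esh X$ and then shows that the further separation step $\esh^{(n+1)} X \to \esh^{(n)} X$ is an equivalence by inspecting its action on $(n+1)$-fold loop spaces via Lemma \ref{lem:derived.modality.loop}, together with the fact that $\esh$ commutes with $\Omega^{n+1}$ on crisp $n$-connected types (which itself follows from the basepoint inclusion $\ast \to X$ being a $\esh$-fibration in this connectivity regime, using Theorem \ref{thm:characterizing.shape.fibration} applied to the crisp discrete type $\esh\Omega X$).
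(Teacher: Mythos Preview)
Your reduction to showing that $\esh^{(n+1)} X$ is discrete is exactly right, and the two ingredients you assemble---$n$-connectedness from Lemma~\ref{lem:X.n.connected.esh.n.X.n.connected} and discreteness of $\Omega^{n+1}\esh^{(n+1)}X$ from Lemma~\ref{lem:derived.modality.loop}---are the correct ones. But both routes you sketch leave real work undone.

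First, you tacitly assume $X$ is crisply \emph{pointed} (so that $\Omega X$, $\Omega^{n+1}X$, and hence $\esh\Omega X$ are crisp). This is not given: $X$ is only crisp and $n$-connected. The paper supplies this step by noting that $\flat\trunc{X}\simeq\trunc{\flat X}$ is contractible, so one may assume a crisp $x :: X$; you need this.

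Second, the paper avoids the coherence issues you flag in the iterated-delooping argument by organizing the induction differently: it applies the induction hypothesis not to $X$ at one lower connectivity but to the crisp $n$-connected type $\Omega X$ when proving the $(n+1)$ case. This yields directly that $\Omega\,\esh^{(n+2)}X \simeq \esh^{(n+1)}\Omega X$ is discrete. The remaining step is then a \emph{single} application of Theorem~\ref{thm:locally.crisply.discrete.BAut.discrete}: since $\esh^{(n+2)}X$ is pointed and $0$-connected it is literally equal to $\BAut_{\esh^{(n+2)}X}(\pt^{\esh^{(n+2)}})$, and since its loop space at that crisp basepoint is discrete it is locally crisply discrete, so the theorem applies. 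This observation---that a pointed $0$-connected type \emph{is} a $\BAut$---is precisely what substitutes for your multi-stage delooping and lets the induction close without any higher coherence.

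Your alternative induction (apply the hypothesis to $X$ itself, then show $\esh^{(n+1)}X \to \esh^{(n)}X$ is an equivalence via $(n+1)$-fold loop spaces) is less direct: it requires comparing the iterated loop spaces of two different separated reflections, and the $\esh$-fibration argument you invoke for $\ast \to X$ already presupposes the crisp basepoint you have not yet secured.
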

\begin{proof}
For $n \equiv -1$, the statement follows tautologically. It remains to show that
assuming the statement for $n$ implies $n + 1$. We note here that since $\Nb$ is
crisply discrete, we may assume all natural numbers are crisp.

First, we argue that we may assume that $X$ is crisply pointed. Since $X$ is
$( n + 1 )$-connected and $n \geq -1$, in particular $\trunc{X}$ is contractible and so
also $\flat \trunc{X}$ is contractible. By
Corollary 6.7 of \cite{RealCohesion}, $\flat \trunc{X} \simeq \trunc{\flat X}$
so that $\trunc{\flat X}$ is also contractible. Since we are trying to prove
that a map is an equivalence, which is a proposition, we may assume that we have
a $u : \flat X$, and therefore assume that we have $u \equiv x^{\flat}$ for a
crisp $x :: X$.

Now, assume that $x :: X$ is a crisp point of $X$ and that $X$ is $(n +
1)$-connected. Then $\Omega X$ is a crisp, $n$-connected type and therefore
$\shape^{(n+1)}\Omega X \to \shape \Omega X$ is an equivalence by hypothesis; in
partiuclar $\shape^{(n+1)} \Omega X$ is discrete. Therefore,
$\shape^{(n+1)}\Omega X \simeq \Omega \shape^{(n+2)}X$ is discrete. By Lemma
\ref{lem:X.n.connected.esh.n.X.n.connected}, $\shape^{(n+2)} X$ is $(n +
1)$-connected and therefore in particular $0$-connected; therefore, it is
locally crisply discrete. Since it is pointed and $0$-connected, it is also equivalent to
$\BAut_{\shape^{(n+2)}X}(x^{\shape^{(n+1)}})$ and
so by Theorem \ref{thm:locally.crisply.discrete.BAut.discrete}, it is discrete.
But then the canonical map $\shape^{(n+2)} X \to \shape X$ is an equivalence by
Lemma \ref{lem:separated.modality.units}.
\end{proof}

Using Theorem \ref{thm:shape.preserves.crisp.connectedness}, we can show that
the homotopy type of a higher group is a higher group.
\begin{defn}
  A \emph{$k$-commutative $\infty$-group} is a type $G$ identified with
  $\Omega^{k+1} \BB^{k+1} G$ for a pointed, $k$-connected type $\BB^{k+1}
  G$.\footnote{In \cite{HigherGroups}, $k$-commutative $\infty$-groups are
    called $(k+1)$-tuply groupal, but I couldn't bear to subject the reader to
    such terminology.}
  A homomorphism of $k$-commutative $\infty$-groups is a pointed map $\BB^{k+1}
  G \to \BB^{k + 1} H$.
\end{defn}

\begin{lem}\label{lem:derived.modality.loop.naturality}
  The equivalence ${\modal} \Omega^{(n)} = \Omega^{(n)} {\modal}^{(n)}$ of Lemma
  \ref{lem:derived.modality.loop} is natural. Let $f : X \pto Y$ be a pointed map between pointed
  types. Then the following square commutes:
  \[
    \begin{tikzcd}
      {\modal} \Omega^{n} X \arrow[r, "{\modal} \Omega^{n} f"] \arrow[d, "\sim"'] & {\modal} \Omega^n Y \arrow[d,
      "\sim"] \\
      \Omega^n {\modal}^{(n)} X \arrow[r, "\Omega {\modal}^{(n)} f"'] & \Omega^n
      {\modal}^{(n)} Y
    \end{tikzcd}
  \]
\end{lem}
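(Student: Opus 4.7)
The plan is to induct on $n$. In the base case $n = 0$, the equivalence is simply $\bang f = \bang^{(0)} f$ and the square is the naturality square of the $\bang$-functor applied to $f$. For the inductive step, I will unfold the equivalence of Lemma \ref{lem:derived.modality.loop} as a chain of two equivalences
\[
\Omega^{n+1}\bang^{(n+1)} X \simeq \Omega^{n}\bang^{(n)} \Omega X \simeq \bang \Omega^{n+1} X,
\]
where the first equivalence is obtained by applying $\Omega^n$ to the equivalence $\Omega \bang^{(n+1)} X \simeq \bang^{(n)} \Omega X$ (Proposition 2.27 of \cite{LocalizationInHoTT} applied to the modality $\bang^{(n)}$), and the second by the inductive hypothesis applied to the pointed map $\Omega f : \Omega X \pto \Omega Y$. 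Naturality of the composite reduces to naturality of each link in the chain.

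The heart of the argument is proving that for any modality $\bang$ and pointed map $h : Z \pto W$, the equivalence $\bang \Omega Z \simeq \Omega \bang' Z$ is natural in $h$. By construction, this equivalence is the unique factorization, through the $\bang$-unit $(-)^{\bang} : \Omega Z \to \bang \Omega Z$, of the map $\Omega\,(-)^{\bang'} : \Omega Z \to \Omega \bang' Z$ (which lands in a $\bang$-modal type since $\bang' Z$ is $\bang$-separated). To check that the square built from $\bang \Omega h$ and $\Omega \bang' h$ commutes, I will precompose both sides by the $\bang$-unit $(-)^{\bang} : \Omega Z \to \bang \Omega Z$; this reduces the square to the $\Omega$ of the naturality square of the $\bang'$-unit applied to $h$, which commutes tautologically. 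Since $\Omega \bang' W$ is $\bang$-modal, the universal property of the $\bang$-unit then promotes this agreement-after-precomposition to a genuine equality of maps $\bang \Omega Z \to \Omega \bang' W$.

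With the base equivalence shown to be natural, naturality of the second equivalence $\Omega^{n}\bang^{(n)}\Omega X \simeq \bang \Omega^{n+1} X$ is just the inductive hypothesis applied to $\Omega f$, and pasting the two naturality squares together gives the desired square for $n+1$. The main obstacle is not conceptual but organizational: one must carefully track which modality in the tower $\bang, \bang', \bang'', \ldots$ is being used at each stage and at which pointed type, in order to align the two equivalences so that their naturality squares can be composed. Once the induction is set up with the correct shifts, each step is a routine application of the universal property of a modal unit.
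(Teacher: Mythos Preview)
Your proof is correct, and the central idea --- precompose with the $\bang$-unit and use the universal property of the unit to reduce to a naturality square that commutes by construction --- is the same one the paper uses. The difference is that you carry out an explicit induction on $n$, peeling off one loop at a time via the equivalence $\Omega\bang' \simeq \bang\Omega$, while the paper does the whole thing in one step: since $\Omega^n\bang^{(n)}Y$ is already $\bang$-modal, it suffices to check the square after precomposing with $(-)^{\bang} : \Omega^n X \to \bang\Omega^n X$, and at that point the square is simply $\Omega^n$ applied to the $\bang^{(n)}$-naturality square of $f$. Your induction is really just verifying the implicit claim that the vertical equivalences restrict along the $\bang$-unit to $\Omega^n$ of the $\bang^{(n)}$-unit, which the paper takes as understood from the construction in Lemma~\ref{lem:derived.modality.loop}. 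So your argument is a more unpacked version of the same proof; the paper's buys brevity, yours buys explicitness about how the equivalence is built.
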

\begin{proof}
  Since $\Omega^n {\modal}^{(n)} Y$ is modal, we may check that this commutes on $p
  : \Omega^n X$. When restricted to $\Omega^n X$, the square becomes $\Omega^n$
  applied to the ${\modal}^{(n)}$-naturality square, which commutes.
\end{proof}

\begin{thm}\label{thm:shape.of.infty.group.is.infty.group}
 Suppose that $G$ is a crisp, $k$-commutative $\infty$-group with $(k+1)$-fold
 delooping $\BB^{k+1}G$. Then $\shape G$ is a $k$-commutative $\infty$-group with
 delooping $\shape \BB^{k+1} G$ and the unit $(-)^{\shape} : G \to \shape G$ is a homomorphism.
\end{thm}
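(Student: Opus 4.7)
The plan is to construct the $(k+1)$-fold delooping of $\esh G$ explicitly as $\esh \BB^{k+1}G$, verify that this pointed type has the required connectedness, compute its $(k+1)$-fold loop space, and finally verify that the shape unit itself serves as the pointed map of deloopings witnessing that $(-)^{\esh} : G \to \esh G$ is a homomorphism.

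First I would point $\esh \BB^{k+1} G$ at $\pt_{\BB^{k+1}G}^{\esh}$. Since $\BB^{k+1} G$ is crisp and $k$-connected by hypothesis, Theorem \ref{thm:shape.preserves.crisp.connectedness} immediately gives both that $\esh \BB^{k+1} G$ is $k$-connected and that the canonical comparison map $\esh^{(k+1)} \BB^{k+1} G \to \esh \BB^{k+1} G$ is an equivalence. The $k$-connectedness gives the first requirement in the definition of a $k$-commutative $\infty$-group, and the equivalence $\esh^{(k+1)} \BB^{k+1} G \simeq \esh \BB^{k+1} G$ will be needed in the next step.

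Next I would compute the loop space. By Lemma \ref{lem:derived.modality.loop} applied $(k+1)$-fold,
\[
\esh G \;=\; \esh \Omega^{k+1} \BB^{k+1} G \;\simeq\; \Omega^{k+1} \esh^{(k+1)} \BB^{k+1} G,
\]
and by the equivalence just obtained this is in turn equivalent to $\Omega^{k+1} \esh \BB^{k+1} G$. This witnesses $\esh G$ as $\Omega^{k+1}$ of the pointed $k$-connected type $\esh \BB^{k+1} G$, so $\esh G$ is a $k$-commutative $\infty$-group with delooping $\esh \BB^{k+1} G$.

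Finally, for the homomorphism claim, I would take the obvious pointed map $(-)^{\esh} : \BB^{k+1} G \to \esh \BB^{k+1} G$ as the delooping of the unit. What must be checked is that applying $\Omega^{k+1}$ to this map and transporting along the equivalences from Lemma \ref{lem:derived.modality.loop} recovers $(-)^{\esh} : G \to \esh G$. This is precisely the content of Lemma \ref{lem:derived.modality.loop.naturality} (applied $k+1$ times, together with the equivalence $\esh^{(k+1)} \BB^{k+1} G \simeq \esh \BB^{k+1} G$), so the square of pointed maps commutes and the unit is a homomorphism. The main obstacle, if any, is the bookkeeping around identifying $\esh^{(k+1)}$ with $\esh$ on $\BB^{k+1} G$ coherently enough that the naturality square of Lemma \ref{lem:derived.modality.loop.naturality} transports to a naturality square for $\esh$ itself; this reduces to checking that both equivalences are induced by the canonical comparison map of Lemma \ref{lem:separated.modality.units}, which is immediate from uniqueness of modal units.
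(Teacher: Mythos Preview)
Your proposal is correct and follows essentially the same approach as the paper: both use Theorem~\ref{thm:shape.preserves.crisp.connectedness} to obtain $k$-connectedness of $\esh\BB^{k+1}G$ and the equivalence $\esh^{(k+1)}\BB^{k+1}G\simeq\esh\BB^{k+1}G$, then Lemma~\ref{lem:derived.modality.loop} to compute the loop space, and finally Lemma~\ref{lem:derived.modality.loop.naturality} together with the fact that the composite through $\esh^{(k+1)}$ equals the $\esh$-unit to verify the homomorphism claim. Your additional remark about coherence via uniqueness of modal units is a welcome bit of care that the paper leaves implicit.
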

\begin{proof}
  By Theorem \ref{thm:shape.preserves.crisp.connectedness}, $\shape \BB^{k+1} G$
  is $k$-connected and may be pointed at $\pt_{\BB^{k+1} G}^{\shape}$. By the same
  theorem,
  \begin{align*}
    \Omega^{k+1} \shape \BB^{k+1} G &\simeq \Omega^{k+1} \shape^{(k+1)} \BB^{k+1} G \\
    &\simeq \shape \Omega^{k+1} \BB^{k+1} G \\
    &\simeq \shape G.
  \end{align*}
  By Lemma \ref{lem:derived.modality.loop.naturality} and the fact that the
  composite $\BB^{k+1} G \to \shape^{(k+1)} \BB^{k+1} G \xto{\sim} \shape \BB^{k+1}
  G$ is equal to the unit $\BB^{k+1} G \to \shape \BB^{k+1} G$, this unit deloops
  the unit $G \to \shape G$, showing that the latter is a $k$-commutative homomorphism.
\end{proof}

As a corollary, we can understand the homotopy type of some classifying types.
\begin{itemize}
\item Let $\BB \type{GL}_1(\Rb)$ be the type of $1$-dimensional real vector
  spaces. Since $\shape \type{GL}_1(\Rb) = \{-1,\, 1\}$ may be identified with the
  group of signs, we get find that $\shape \BB \type{GL}_1(\Rb) = \BB \Zb/2$. We
  can call the $\shape$-unit $w_1 : \BB \type{GL}_1(\Rb) \to \BB \Zb/2$ the
  \emph{first Stiefel-Whitney class}, since pushing forward by it sends a real
  line bundle to a first degree cocycle in $\Zb/2$ cohomology. Since this is a $\shape$-unit, we see that the first Stiefel-Whitney class is the universal discrete cohomological invariant of a real line bundle.
\item Let $\BB \type{U}(1)$ be the type of $1$-dimensional normed complex vector
  spaces. Since $\shape \type{U}(1) = \BB \Zb$ is a pointed, connected type whose
  loop space is $\Zb$, we find that $\shape \BB \type{U}(1) = \BB^2 \Zb$. We can
  call the $\shape$-unit $c_1 : \BB \type{U}(1) \to B^2 \Zb$ the \emph{first Chern
  class}, since pushing forward by it sends a Hermitian line bundle to a second
degree cocycle in integral cohomology.
Since this is a $\shape$-unit, we see that the first Chern class is the universal discrete cohomological invariant of a complex line bundle.
\end{itemize}

We can now show, with a quick modal argument, that the first Chern class of the
Hopf fibration generates $H^2(\Sb^2;\Zb)$.
\begin{prop}
The first Chern class $c_1(h)$ of the Hopf fibration $h : \Sb^3 \to \Sb^2$
generates $H^2(\Sb^2; \Zb)$.
\end{prop}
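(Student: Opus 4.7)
The plan is to lift the Hopf fibration's classifying map to a $\esh$-fibration, apply $\esh$ to obtain an extended fiber sequence, and conclude by observing that $\esh \Sb^3$ is $2$-connected.

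First I would note that the Hopf fibration has a classifying map $\chi_h : \Sb^2 \to \BB \type{U}(1)$ sending a line $\ell \in \Cb P^1 \simeq \Sb^2$ to itself as a normed complex line, so that $\Sb^3 \simeq \fib_{\chi_h}(\pt)$ (via Lemma \ref{lem:fiber.of.hopf.fibration}); by definition $c_1(h)$ is the $\esh$-unit factorization of $c_1 \circ \chi_h$. Since $\BB \type{U}(1)$ is $0$-connected, every fiber of $\chi_h$ is merely equivalent to $\Sb^3$, and $\esh \Sb^3$ is a crisp discrete type, so Theorem \ref{thm:characterizing.shape.fibration} makes $\chi_h$ a $\esh$-fibration. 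Applying $\esh$ preserves its defining fiber sequence, and using $\esh \BB \type{U}(1) \simeq \BB^2 \Zb$ from Theorem \ref{thm:shape.of.infty.group.is.infty.group}, yields
$$\esh \Sb^3 \to \esh \Sb^2 \xto{c_1(h)} \BB^2 \Zb.$$

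Since $\BB^2 \Zb$ is a $K(\Zb, 2)$, we have $H^2(\esh \Sb^2; \Zb) \simeq \Hom(\pi_2 \esh \Sb^2, \Zb)$, so showing $c_1(h)$ generates reduces to showing that $\pi_2 c_1(h)$ is an isomorphism. The long exact sequence of the fiber sequence above around dimension $2$ reads
$$\pi_2 \esh \Sb^3 \to \pi_2 \esh \Sb^2 \xto{\pi_2 c_1(h)} \Zb \to \pi_1 \esh \Sb^3,$$
so it suffices to show that $\esh \Sb^3$ is $2$-connected.

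The main obstacle is this $2$-connectedness, which I plan to establish by the iterated hemisphere decomposition. Writing $\Sb^n = D^n \cup_{\Sb^{n-1}} D^n$ with each hemisphere $D^n$ contractible (every point admits a straight-line path to a pole, as used in Section \ref{sec:wedge.fibration.example} to contract $\{(x,y)\in\Rb^2\mid xy = 0\}$), and using that $\esh$ preserves pushouts (being a localization, hence a left adjoint onto the subuniverse of discrete types), we get $\esh \Sb^n \simeq \Sigma \esh \Sb^{n-1}$ inductively. Bootstrapping from $\esh \Sb^1 \simeq \BB \Zb$ (Theorem \ref{thm:fundamental.group.of.circle}) gives $\esh \Sb^2 \simeq \Sigma \BB \Zb$ and $\esh \Sb^3 \simeq \Sigma^2 \BB \Zb$. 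Since $\BB \Zb$ is $0$-connected, its double suspension $\Sigma^2 \BB \Zb$ is $2$-connected by the standard suspension-connectivity estimate. The LES then collapses to $0 \to \pi_2 \esh \Sb^2 \xto{\pi_2 c_1(h)} \Zb \to 0$, so $\pi_2 c_1(h)$ is an isomorphism and $c_1(h)$ generates $H^2(\Sb^2; \Zb)$ up to sign.
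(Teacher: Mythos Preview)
Your overall architecture matches the paper's: both arguments pass through the classifying map $\chi : \Sb^2 \to \BB\type{U}(1)$, identify its fiber with $\Sb^3$, and reduce the claim to the $2$-connectedness of $\esh\Sb^3$. Where the paper packages this as ``$c_1(h)$ is a $\esh_2$-unit'' (since $\chi$ is $\esh_2$-connected and $c_1$ is a $\esh_2$-unit), you instead run the long exact sequence of the $\esh$-image fiber sequence. These are essentially the same argument; the paper's phrasing via $\esh_2$-units is slightly slicker but not materially different. (A minor notational quibble: in your displayed fiber sequence the second map should be $\esh\chi_h$, the factorization of $c_1(h)$ through the $\esh$-unit, rather than $c_1(h)$ itself.)

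The genuine gap is in your justification that $\esh\Sb^n \simeq \Sigma\,\esh\Sb^{n-1}$. The reasoning ``$\esh$ is a localization, hence a left adjoint, hence preserves pushouts'' does not work internally in HoTT: the reflector $\esh$ sends a pushout to the pushout \emph{in discrete types}, which is $\esh(\esh A \cup_{\esh C} \esh B)$, not $\esh A \cup_{\esh C} \esh B$. For a general modality this outer $\esh$ cannot be dropped (think of $\trunc{-}_0$, which certainly does not commute with suspension). What is true in Real Cohesion is that $\esh$ preserves \emph{crisp} pushouts --- this is the fact the paper invokes elsewhere (``$\Rb\vee\Rb$ is contractible since it is given as a crisp pushout and $\esh$ preserves crisp pushouts'') --- and its proof goes through the comodality $\flat$, not through abstract adjoint nonsense. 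So your suspension argument can be repaired, but only by (a) invoking the correct crisp-pushout preservation result, and (b) actually establishing that the topological sphere $\Sb^n \subseteq \Rb^{n+1}$ decomposes as a pushout of hemispheres along $\Sb^{n-1}$, which is itself a nontrivial constructive claim not addressed in the paper. To be fair, the paper also simply asserts $\esh_2\Sb^3 = \ast$ without further comment, so you are attempting to fill a gap the paper leaves open --- but the attempt as written needs the corrected justification.
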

\begin{proof}
  For the purpose of this proof, we make an identification of $\Sb^2$ with
  $\Cb P^1$ and so take the points of $\Sb^2$ to be complex lines in $\Cb^2$. We
  will show that the $\shape_2$-unit $\Sb^2 \to \shape_2 \Sb^2$
  generates $H^2(\Sb^2; \Zb)$, and then that $c_1(h)$ factors uniquely through
  this unit.

  Consider the long exact sequence of homotopy groups associated to the Hopf
  fibration. Since we have calculated (in Lemma
  \ref{lem:universal.cover.of.circle}) that $\Omega \shape \Sb^1 \simeq \Zb$, we
  see that $\pi_2(\shape\Sb^2) \simeq \pi_1(\shape\Sb^1) = \Zb$. Therefore, $\shape_2 \Sb^2$
  is a $\BB^2 \Zb$, and the $\shape_2$-unit $(-)^{\shape_2} : \Sb^2 \to \shape_2
  \Sb^2$ induces the identity on $\pi_2$ and so generates $H^2(\Sb^2; \Zb)$.

  It remains to show that $c_1(h) : \Sb^2 \to \BB^2 \Zb$ is an $\shape_2$-unit. Let $\chi : \Sb^2 \to \BB \type{U}(1)$ send a line $\La : \Sb^2$ in
  $\Cb^2$ to $\{\La\}$,
  the normed $1$-dimensional complex vector space that it is as a subspace of $\Cb^2$. This
  classifies the Hopf fibration by Lemma \ref{lem:fiber.of.hopf.fibration} and because a unitary isomorphism with $\Cb$ is
  determined by an element of unit norm:
  $$\fib_{\chi}(\Cb) \equiv \dsum{\La :
    \Sb^2} (\{\La\} = \Cb) \simeq \dsum{\La : \Sb^2} \dsum{\ell :
    \{\La\}}(\trunc{\ell} = 1) \simeq \dsum{\La : \Sb^2} \fib_{h}(\La)$$
  In other words, $c_1(h) \equiv c_1 \circ \chi$. Now, the fibers of $\chi$ are
  merely equivalent to $\Sb^3$, and $\shape_2 \Sb^3 = \ast$, so it is
  $\shape_2$-connected. But $c_1$ is an $\shape_2$-unit and so also
  $\shape_2$-connected. Therefore, $c_1 \circ \chi$ is a $\shape_2$-connected map
  into a $\shape_2$-modal type; by Lemma 1.38 of \cite{RSS}, it is therefore a $\shape_2$-unit.

\end{proof}

\section{A Bit of Covering Space Theory}\label{sec:Covering.Theory}

In this section, we'll see a bit of modal covering theory and get a sense of how
working with coverings using modalities feels. In his \emph{Cohesive Covering
  Theory} extended abstract \cite{WellenCovering}, Wellen defines a
\emph{modal covering} map $\pi : E \to B$ for a modality ${\modal}$ to be a
${\modal}$-{\'e}tale map. He then specializes to the modality $\shape_1$ to recover
the usual covering theory. Here, in light of further conversation with Wellen,
we will make a slightly less general definition of covering map which relates
more closely to the traditional theory.

\begin{defn}
  A map $\pi : E \to B$ is a \emph{cover} if it is $\shape_1$-{\'e}tale and its
  fibers are sets.
\end{defn}

Recall from Section \ref{sec:modality.refresher} that ${\modal}$-equivalences lift
uniquely against ${\modal}$-{\'e}tale maps. In particular, in any square
\[
  \begin{tikzcd}
    \ast \arrow[r] \arrow[d,"0"' ] & E \arrow[d,"\pi" ] \\
    \Rb \arrow[r] & B
  \end{tikzcd}
\]
there is a unique filler since $\Rb$ is $\shape_1$-connected.
Therefore, covers satisfy the \emph{unique} path lifting property.

We can quickly prove the classical theorem that coverings of a space $X$
correspond to actions of the fundamental groupoid of $X$ on discrete sets.
\begin{thm}
  Let $X$ be a type and let $\mbox{Cov}(X)$ denote the type of covers of $X$. Then
  $$\mbox{Cov}(X) \simeq (\shape_1 X \to \Type_{\shape_0}).$$
\end{thm}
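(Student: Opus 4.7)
The plan is to leverage Corollary \ref{cor:characterizing.etale.maps} specialized to $\bang \equiv \esh_1$, which gives
$$\mbox{\'Et}_{\esh_1}(X) :\equiv \dsum{E : \Type}\dsum{\pi : E \to X} \mbox{is$\esh_1${\'e}tale}(\pi) \;\simeq\; \bigl(\esh_1 X \to \Type_{\esh_1}\bigr).$$
Since $\mbox{Cov}(X)$ is exactly the subtype of $\mbox{\'Et}_{\esh_1}(X)$ picking out those $\esh_1$-{\'e}tale maps whose fibers are sets, I would restrict the equivalence above to this sub-proposition on each side. Concretely, I would write $\mbox{Cov}(X) \simeq \dsum{(E, \pi, e) : \mbox{\'Et}_{\esh_1}(X)} \dprod{b : X} \mathsf{isSet}(\fib_\pi(b))$ and transport the ``set fiber'' condition across the equivalence of Corollary \ref{cor:characterizing.etale.maps}.

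The key observation is that under that equivalence, a cover $\pi$ corresponds to the family $\tilde{E} :\equiv \fib_{\esh_1 \pi} : \esh_1 X \to \Type_{\esh_1}$, with $\fib_\pi(b) \simeq \tilde{E}(b^{\esh_1})$ because $\pi$ is $\esh_1$-{\'e}tale. Hence the condition ``$\fib_\pi(b)$ is a set for all $b : X$'' translates into ``$\tilde{E}(b^{\esh_1})$ is a set for all $b : X$'', which by surjectivity of the $\esh_1$-unit $(-)^{\esh_1} : X \to \esh_1 X$ (a $\esh_1$-unit is $\esh_1$-connected and hence in particular $(-1)$-connected) is equivalent to ``$\tilde{E}(u)$ is a set for all $u : \esh_1 X$''.

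Now I would identify the subtype of $\Type_{\esh_1}$ consisting of discrete $1$-types that are additionally sets with $\Type_{\esh_0}$. This is precisely the statement that an $\esh_1$-modal type is $\esh_0$-modal if and only if it is a set, which holds because a type is $\esh_0$-modal exactly when it is a discrete set, and $\esh_1$-modality together with being $0$-truncated amounts to the same thing. Under this identification, the subtype of $(\esh_1 X \to \Type_{\esh_1})$ where values are sets is the type $\esh_1 X \to \Type_{\esh_0}$.

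Putting these together gives the chain of equivalences
\begin{align*}
\mbox{Cov}(X) &\simeq \dsum{(\tilde{E} : \esh_1 X \to \Type_{\esh_1})} \dprod{u : \esh_1 X} \mathsf{isSet}(\tilde{E}(u)) \\
&\simeq \bigl(\esh_1 X \to \Type_{\esh_0}\bigr),
\end{align*}
which is the desired statement. The main obstacle I anticipate is bookkeeping rather than conceptual: making sure the equivalence of Corollary \ref{cor:characterizing.etale.maps} is applied carefully enough that the ``set fiber'' side-condition is transported cleanly, and verifying that $\Type_{\esh_0}$ sits inside $\Type_{\esh_1}$ exactly as the sub-universe of sets, so that the final equivalence is genuinely given by postcomposition with this inclusion.
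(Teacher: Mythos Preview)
Your proposal is correct and follows essentially the same approach as the paper: apply Corollary \ref{cor:characterizing.etale.maps} at $\esh_1$ and then restrict along the ``fibers are sets'' condition to cut $\Type_{\esh_1}$ down to $\Type_{\esh_0}$. The paper states this in two sentences, whereas you have unpacked the bookkeeping (surjectivity of the $\esh_1$-unit to pass the set condition from $X$ to $\esh_1 X$, and the identification of $\Type_{\esh_0}$ as the sets inside $\Type_{\esh_1}$); these details are accurate and require no further justification.
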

\begin{proof}
  This follows immediately from Corollary \ref{cor:characterizing.etale.maps},
  applied to the modality $\shape_1$. This corollary says that $\shape_1$-{\'e}tale
  maps into $X$ correspond to maps from $\shape_1 X$ to $\Type_{\shape_1}$. If furthermore
  the fibers are sets, then the maps go from $\shape_1 X$ to $\Type_{\shape_0}$.
\end{proof}

 Classically, the universal cover is just any simply connected cover. We can let
 this characterization lead us to a definition of the universal cover of a
 pointed, homotopically connected space. Let $X$ be a space and $\pi : \tilde{X} \to X$ a covering with
 $\tilde{X}$ simply connected in the sense that $\shape_1 \tilde{X} = \ast$. Since
 $\pi$ is a covering, and hence $\shape_1$-{{\'e}tale}, the $\shape_1$-naturality
 square
 \[
 \begin{tikzcd}
   \tilde{X} \arrow[d,"\pi"'] \arrow[r] & \shape_1\tilde{X} \arrow[d,"\shape_1 \pi"]
   \\
   X \arrow[r] & \shape_1 X
 \end{tikzcd}
 \]
 is a pullback. But $\shape_1 X = \ast$, so this shows us that $\tilde{X} =
 \fib_{(-)^{\shape_1}}(u)$ for some $u : \shape_1 X$. This leads us to the following
 definition.
 \begin{defn}
   Let $X$ be a type and $\pt_X : X$ a base point. Suppose further that $X$ is
   homotopically connected in the sense that $\trunc{\shape_1 X}_{0} = \ast$. Then
   the \emph{universal cover} $\pi : \tilde{X} \pto X$ is defined to be $\fst :
   \fib_{(-)^{\shape_1}}(\pt_X^{\shape_1}) \to X$, with $\pt_{\tilde{X}} :\equiv
   (\pt_X,\, \refl)$ and $\pt_{\pi} :\equiv \refl$:
   \[
 \begin{tikzcd}
   \tilde{X} \arrow[d,"\pi"'] \arrow[r] & \ast \arrow[d,"\pt_X^{\shape_1}"]
   \\
   X \arrow[r] & \shape_1 X
 \end{tikzcd}
   \]
 \end{defn}

 \begin{thm}
The universal cover $\pi : \tilde{X} \to X$ is the initial pointed cover of $X$.
That is, for any pointed cover $c : C \pto X$, there is a unique pointed cover $\chi_c
  : \tilde{X} \pto C$ such that $c \pcirc \chi_c = \pi$ as pointed maps.
 \end{thm}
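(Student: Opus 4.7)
The plan is to use Corollary \ref{cor:characterizing.etale.maps} to rewrite the data of the cover $c : C \to X$ as a family $F : \esh_1 X \to \Type_{\esh_0}$ giving $C \simeq \dsum{x : X} F(x^{\esh_1})$ over $X$, under which the pointing $\pt_C$ over $\pt_X$ becomes a distinguished element $e : F(\pt_X^{\esh_1})$. Symmetrically, the universal cover $\tilde X = \dsum{x : X}(x^{\esh_1} = \pt_X^{\esh_1})$ is exhibited as a cover by the family $u \mapsto (u = \pt_X^{\esh_1}) : \esh_1 X \to \Type_{\esh_0}$, pointed at $(\pt_X, \refl)$; its values are sets because $\esh_1 X$ is a discrete $1$-type.

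For existence, I would define $\chi_c(x, p) :\equiv (x,\, \tr_F(p^{-1})(e))$; this is manifestly over $X$, and sends the basepoint of $\tilde X$ to $(\pt_X, e) = \pt_C$. To check that $\chi_c$ is itself a cover, I would appeal to right cancellation for the $\esh_1$-equivalence/$\esh_1$-{\'e}tale orthogonal factorization system: since $\pi = c \circ \chi_c$ and both $\pi$ and $c$ are $\esh_1$-{\'e}tale, $\chi_c$ is $\esh_1$-{\'e}tale. Its fibers can be read off as subtypes of identity types in the $1$-type $\esh_1 X$, and hence are sets.

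The heart of the proof is uniqueness, which rests on the observation that $\tilde X$ is itself $\esh_1$-connected as a type, since it is defined as the fiber of the $\esh_1$-unit $(-)^{\esh_1} : X \to \esh_1 X$, which is $\esh_1$-connected as a map. Given any pointed map $\chi' : \tilde X \pto C$ over $X$, I would unpack it as $\chi'(x, p) = (x, s(x, p))$ with $s(x, p) : F(x^{\esh_1})$, and consider the function $(x,p) \mapsto \tr_F(p)(s(x,p))$ valued in the discrete set $F(\pt_X^{\esh_1})$. This map lands in an $\esh_1$-modal type and so factors through $\esh_1 \tilde X = \ast$; it is therefore constant, with constant value its image $e$ at the basepoint $(\pt_X, \refl)$. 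Untransporting along $p^{-1}$ then yields $s(x, p) = \tr_F(p^{-1})(e)$, so $\chi' = \chi_c$.

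The main obstacle, I expect, is simply keeping the transport directions straight and ensuring the pointed structure is handled rigorously throughout; the cover condition on $\chi_c$ is easy to neglect but falls out cleanly from right cancellation. The conceptual punchline is that $\tilde X$ is $\esh_1$-connected, which rigidifies every map from it into a discrete fibration over $X$ once a basepoint has been picked.
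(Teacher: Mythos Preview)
Your argument is correct and shares the same conceptual core as the paper's proof---namely, that $\tilde X$ is $\esh_1$-connected because it is a fiber of the $\esh_1$-unit---but the two proofs package this fact differently.

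The paper does not construct $\chi_c$ explicitly at all. Instead it observes that the data of a pointed map $\tilde X \pto C$ over $X$ is exactly a filler of the square
\[
\begin{tikzcd}
\ast \arrow[r,"\pt_C"] \arrow[d,"\pt_{\tilde X}"'] & C \arrow[d,"c"] \\
\tilde X \arrow[r,"\pi"'] & X
\end{tikzcd}
\]
and then notes that since $\tilde X$ is $\esh_1$-connected, the left map $\ast \to \tilde X$ is a $\esh_1$-equivalence, while the right map $c$ is $\esh_1$-\'etale; orthogonality of the $\esh_1$-equivalence/$\esh_1$-\'etale factorization system then gives contractibility of the type of fillers in one stroke. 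Existence and uniqueness come together, and no transport bookkeeping is needed. Your approach instead unwinds this orthogonality by hand: you build $\chi_c$ via transport along the family $F$, and then prove uniqueness by the constancy argument (which is exactly the content of the lifting property, made explicit). The paper's route is slicker and avoids the transport-direction hazards you anticipate; your route has the virtue of producing an explicit formula for $\chi_c$. Both proofs finish the ``$\chi_c$ is a cover'' step the same way, by right cancellation for $\esh_1$-\'etale maps.
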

 \begin{proof}
   We need to show that the universal cover is a cover with the correct
   universal property.

   First, note that as the fiber of a $\shape_1$-unit, $\tilde{X}$ is
   $\shape_1$-connected (that is, simply connected).
   Therefore, the naturality square
 \[
 \begin{tikzcd}
   \tilde{X} \arrow[d,"\pi"'] \arrow[r] & \shape_1\tilde{X} \arrow[d,"\shape_1 \pi"]
   \\
   X \arrow[r] & \shape_1 X
 \end{tikzcd}
 \]
 is equal to the square
 \[
 \begin{tikzcd}
   \tilde{X} \arrow[d,"\pi"'] \arrow[r] & \ast \arrow[d,"\pt_X^{\shape_1}"]
   \\
   X \arrow[r] & \shape_1 X
 \end{tikzcd}
\]
which is a pullback. As the $\shape_1$-naturality square of $\pi$ is a pullback,
$\pi$ is $\shape_1$-{\'e}tale. The fiber of $\pi$ over any point $x : X$ is
equivalent to $x^{\shape_1} = \pt_X^{\shape_1}$, which is a type of identifications
in the $1$-type $\shape_1 X$ and is therefore a set. This proves that $\pi$ is a cover.

Now for the universal property. Note that since $\pi(\pt_{\tilde{X}}) \equiv \pt_X$, the data of a pointed cover
$c : C \pto X$ can be expressed as a square
\[
  \begin{tikzcd}
    \ast \arrow[r,"\pt_C" ] \arrow[d,"\pt_{\tilde{X}}"' ] & C \arrow[d,"c" ] \\
    \tilde{X} \arrow[r,"\pi" ] & X
  \end{tikzcd}
\]
in which the map $c$ is a cover. A filler of that square is precisely a pointed
map $\tilde{X} \to C$ over $X$. But $\tilde{X}$ is $\shape_1$-connected and
therefore the map $\pt_{\tilde{X}} : \ast \to \tilde{X}$ is an
$\shape_1$-equivalence. And since $c$ is a $\shape_1$-{\'e}tale map and
$\shape_1$-equivalences are orthogonal to $\shape_1$-{\'e}tale maps by Lemma 6.1.23
of \cite{RijkeThesis}, the type of fillers of this square is contractible.

It remains to show that the unique filler of the square is a cover. Since $c$
and $\pi$ are $\shape_1$-{\'e}tale, it is $\shape_1$-{\'e}tale. And since $c$ and
$\pi$ have set fibers, it does as well. Therefore, it is a cover.
 \end{proof}

 As promised, Lemma \ref{lem:universal.cover.of.circle} does prove that
 $(\cos,\, \sin) : \Rb \to \Sb^1$ is the universal cover of the circle. This map is
 $\shape_1$-{\'e}tale, its fibers are sets, and $\Rb$ is simply connected.

Theorem \ref{thm:characterizing.shape.fibration} provides us with a simple trick
for showing that a map is a cover.
\begin{cor}\label{cor:characterizing.covers}
 Let $\pi : E \to B$. If there is a crisply discrete set $F$ such that
 $\trunc{\fib_{\pi}(b) = F}$ for all $b : B$, then $\pi$ is a cover.
\end{cor}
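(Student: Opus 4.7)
The proof is essentially a direct invocation of Theorem \ref{thm:characterizing.shape.fibration} with $n = 0$, so my plan is short and consists of two independent verifications corresponding to the two clauses in the definition of ``cover'': being $\esh_1$-{\'e}tale and having set fibers.

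First I would verify the $\esh_1$-{\'e}tale part. The hypothesis gives a crisp $F$ and a mere identification $\trunc{F = \fib_\pi(b)}$ for every $b : B$. Since $F$ is a crisply discrete set, it is in particular a crisp element of $\Type_{\esh}$ and it is a $0$-type. Thus the second and third clauses of Theorem \ref{thm:characterizing.shape.fibration} apply simultaneously (take $n = 0$), yielding that $\pi$ is $\esh_{0+1} = \esh_1$-{\'e}tale.

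Next I would verify the fiber condition. Being a set is a proposition, so from $\trunc{F = \fib_\pi(b)}$ and the fact that $F$ is a set, we conclude that $\fib_\pi(b)$ is a set for every $b : B$. Combining the two points, $\pi$ is $\esh_1$-{\'e}tale with set fibers, hence a cover by definition.

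There is no real obstacle here; the only thing worth being careful about is that one genuinely has $\trunc{F = \fib_\pi(b)}$ as opposed to only $\trunc{F = \esh\fib_\pi(b)}$, since the former is needed both to invoke the {\'e}tale (not merely fibration) conclusion of Theorem \ref{thm:characterizing.shape.fibration} and to transport the ``is a set'' property from $F$ to each fiber.
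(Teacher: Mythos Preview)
Your proof is correct and matches the paper's approach: the corollary is stated there as an immediate consequence of Theorem~\ref{thm:characterizing.shape.fibration}, and your two-step verification (invoking the $n=0$ {\'e}tale clause of that theorem, then transporting ``is a set'' along the mere identification) is exactly the intended argument. Your final remark about needing $\trunc{F = \fib_\pi(b)}$ rather than only $\trunc{F = \esh\fib_\pi(b)}$ is on point and worth making explicit.
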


\begin{rmk}
As promised in Section \ref{subsec:hopf.fibrations}, the map $\Sb^{n+1} \to \Rb
P^n$ is a covering map, and since $\Sb^{n+1}$ is simply connected for $n \geq 0$,
this is the universal cover of $\Rb P^n$.
\end{rmk}

We can prove a seemingly suspect proposition with this trick: any map with
finite fibers is a cover. To do this, we need to prove a bit of folklore.
\begin{lem}\label{lem:finite.types}
Let $\type{Fin} :\equiv \dsum{X : \Type} \trunc{\dsum{n : \Nb} { X = \{1,\ldots,
  n\} }}$ be the type of finite types (types $X$ for which there exists an $n$ such that $X =
\{1,\ldots,n\}$). There is an equivalence
$$\type{Fin} \simeq \dsum{n :
  \Nb} \BAut(n)$$
between the type of finite types and the sum over $n : \Nb$ of the classifying types $\BAut(n)
:\equiv \dsum{X : \Type} \trunc{X = \{1,\ldots,n\}}$ of the symmetric group
$\Aut(n)$.
\end{lem}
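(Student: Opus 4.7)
The plan is to commute the propositional truncation past the $\Sigma$ over $\Nb$. Unfolding definitions,
$$\type{Fin} \equiv \dsum{X : \Type} \trunc{\dsum{n : \Nb} X = \{1,\ldots,n\}},$$
whereas by swapping the two outer sigmas,
$$\dsum{n : \Nb} \BAut(n) \simeq \dsum{X : \Type} \dsum{n : \Nb} \trunc{X = \{1,\ldots,n\}}.$$
So, by $\Sigma$-congruence, it is enough to produce, for each $X : \Type$, an equivalence
$$\trunc{\dsum{n : \Nb} X = \{1,\ldots,n\}} \simeq \dsum{n : \Nb} \trunc{X = \{1,\ldots,n\}}.$$

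Both sides of this equivalence are propositions. The left-hand side is so by construction. For the right, suppose $(n, P)$ and $(m, Q)$ are two inhabitants; we want $n = m$. Since $\Nb$ is a set (by Hedberg applied to its decidable equality), this is itself a proposition, so we may untruncate $P$ and $Q$ to obtain identifications $p : X = \{1,\ldots,n\}$ and $q : X = \{1,\ldots,m\}$. Composing gives $\{1,\ldots,n\} = \{1,\ldots,m\}$, from which $n = m$ by the folklore fact that the standard finite sets are equivalent only when they have the same cardinality.

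With both sides known to be propositions, the equivalence reduces to bi-implication. From left to right, since the codomain is a proposition, we apply the truncation recursor to send $(n, p)$ to $(n, |p|)$. From right to left, we untruncate $P$ in $(n, P)$ (permitted because the codomain is a proposition) to obtain $p : X = \{1,\ldots,n\}$ and return $|(n, p)|$. These maps are mutually inverse because they are maps between propositions.

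The only substantive content is the pigeonhole-style lemma that $\{1,\ldots,n\} \simeq \{1,\ldots,m\}$ forces $n = m$, which is proved by a routine induction on $n$ with case analysis on $m$ using the decidable equality of $\Nb$. Everything else is formal manipulation of $\Sigma$-types and propositional truncations, so the main (minor) obstacle is just establishing uniqueness of cardinality cleanly enough to conclude that the right-hand side is a proposition.
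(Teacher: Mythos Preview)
Your proof is correct and follows essentially the same route as the paper: swap the outer sigmas, reduce to commuting the truncation past the sum over $\Nb$, and establish that $\dsum{n : \Nb}\trunc{X = \{1,\ldots,n\}}$ is already a proposition via the cardinality-uniqueness lemma. The only cosmetic difference is that the paper phrases the final equivalence by observing that the obvious map $(n,|p|)\mapsto |(n,p)|$ is a $\trunc{-}$-unit (citing Lemma~1.24 of \cite{RSS}), whereas you argue by bi-implication between propositions; these amount to the same thing.
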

\begin{proof}
Note that
\begin{align*}
  \dsum{n : \Nb} \BAut(n) &\equiv \dsum{n : \Nb} \dsum{X : \Type} \trunc{X = \{1, \ldots, n\}} \\
  &\simeq \dsum{X : \Type} \dsum{n : \Nb} \trunc{X = \{1, \ldots, n\}}.
\end{align*}
Therefore, it will suffice to show that $\dsum{n : \Nb} \trunc{X = \{1,\ldots,
  n\}} \simeq \trunc{\dsum{n : \Nb} {X = \{1, \ldots, n\}}}$ assuming that $X :
\Type$. But the obvious map $(n, |p|) \mapsto |(n, p)|$ is a $\trunc{-}$-unit by
Lemma 1.24 of \cite{RSS}, so it will suffice to show that $\dsum{n : \Nb}
\trunc{X = \{1,\ldots, n\}}$ is a proposition.

Suppose that $(n, p)$ and $(m, q)$ are of type $\dsum{n : \Nb} \trunc{X = \{1,
  \ldots, n\}}$, seeking $(n, p) = (m, q)$. From $p$ and $q$, we get
$\trunc{\{1, \ldots, n\} = \{1, \ldots, m\}}$. A simple induction shows that
this occurs if and only if $n = m$.
\end{proof}

\begin{prop}
  Let $\pi : E \to B$ be a map whose fibers are finite in the sense that for
  every $b : B$, there exists an $n : \Nb$ such that $\trunc{\fib_{\pi}(b) =
    \{1,\ldots,n\}}$. Then $\pi$ is a cover.
\end{prop}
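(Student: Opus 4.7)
The plan is to apply Lemma \ref{lem:etale.iff.locallyconstant.modal} to the modality $\esh_1$: if the fiber map $\fib_\pi : B \to \Type_{\esh_1}$ factors through the modal unit $(-)^{\esh_1} : B \to \esh_1 B$, then $\pi$ is $\esh_1$-{\'e}tale, and combined with the fact that finite types are sets, $\pi$ will be a cover. Since ``being finite'' is a proposition, the hypothesis lets one uniquely lift $\fib_\pi$ to a map $\tilde{F} : B \to \type{Fin}$. Finite types are discrete sets, so the forgetful map $\type{Fin} \to \Type$ factors through $\Type_{\esh_1}$; hence it suffices to factor $\tilde{F}$ through $\esh_1 B$. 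This would follow immediately from showing that $\type{Fin}$ is itself $\esh_1$-modal, i.e.\ a discrete $1$-type.

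The next step would be to establish that $\type{Fin}$ is a discrete $1$-type. Using Lemma \ref{lem:finite.types}, write $\type{Fin} \simeq \dsum{n : \Nb} \BAut(n)$; each $\BAut(n)$ is a $1$-type (its identification types are bijections between finite sets, which form sets), so the total type is a $1$-type. For discreteness, Lemma \ref{lem:modality.facts} reduces the task to showing that $\Nb$ and each $\BAut(n)$ are discrete. The first is already known. For the second, I would observe that $\type{Fin}$ is locally crisply discrete: for any crisp $F, G :: \type{Fin}$, the identification type $F = G$ is a type of bijections between crisp finite sets, itself a crisp finite set and hence crisply discrete. Then Theorem \ref{thm:locally.crisply.discrete.BAut.discrete} applied to $\type{Fin}$ yields that $\BAut(n) \equiv \BAut_{\type{Fin}}(\{1,\ldots,n\})$ is discrete for every crisp $n :: \Nb$.

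The main obstacle is the final step: extending the discreteness of $\BAut(n)$ from crisp $n$ to every $n : \Nb$, which is needed before Lemma \ref{lem:modality.facts} can be applied to the dependent sum. Because ``$\BAut(n)$ is discrete'' is a proposition in $n$ and $\Nb$ is crisply discrete, every $n : \Nb$ is of the form $u_\flat$ for a unique $u : \flat \Nb$, so $\flat$-induction lets us assume $u \equiv m^{\flat}$ for a crisp $m :: \Nb$ and reduce to the case already treated. With this in hand, Lemma \ref{lem:modality.facts} shows that $\type{Fin}$ is discrete, hence $\esh_1$-modal, so $\tilde{F}$ factors through $\esh_1 B$, giving the desired factorization of $\fib_\pi$ and finishing the proof.
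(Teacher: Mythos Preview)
Your proof is correct and follows essentially the same route as the paper: factor $\fib_\pi$ through $\type{Fin}$, use Lemma~\ref{lem:finite.types} to write $\type{Fin} \simeq \dsum{n : \Nb}\BAut(n)$, exploit the crisp discreteness of $\Nb$ to reduce to crisp $n$, apply Theorem~\ref{thm:locally.crisply.discrete.BAut.discrete} to get each $\BAut(n)$ discrete, conclude that $\type{Fin}$ is a discrete $1$-type, and finish with Lemma~\ref{lem:etale.iff.locallyconstant.modal}. The only cosmetic difference is that the paper handles the crispness issue by rewriting the sum over $\flat\Nb$ up front, whereas you phrase it as a $\flat$-induction at the end; these amount to the same maneuver.
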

\begin{proof}
Note that this condition says that the map $\fib_{\pi} : B \to \Type$ factors
through $\type{Fin} \hookrightarrow \Type$. But by Lemma \ref{lem:finite.types},
$\type{Fin} \simeq \dsum{n : \Nb} \BAut(n)$, and since $\Nb$ is crisply
discrete, we have an equivalence
$$\dsum{n : \Nb} \BAut(n) \simeq \dsum{n : \flat \Nb} \mbox{let $n := m^{\flat}$ in
$\BAut(m)$}.$$
Now, in the inner expression, $m :: \Nb$ is \emph{crisp}, and so Theorem
\ref{thm:locally.crisply.discrete.BAut.discrete} applies and $\BAut(m)$ is
discrete. Therefore, $\type{Fin}$ is a discretely indexed sum of discrete types,
and so it is also discrete. It is, futhermore, a $1$-type since it is a set
indexed sum of $1$-types.

Therefore, $\fib_b$ factors through $\shape_1 B$ and so by Lemma
\ref{lem:etale.iff.locallyconstant.modal}, is $\shape_1$-{\'e}tale. By hypothesis,
its fibers are finite and therefore sets, so it is a cover.
\end{proof}

\begin{rmk}
What is strange about this theorem is that there appear to be counterexamples.
Consider the map $\Rb \vee \Rb \to \Rb$ we looked at in Example
\ref{sec:wedge.fibration.example}. It seems like its fibers are finite. By a
quick application of descent, we can see that its fiber over $r : \Rb$ is
equivalent to the suspension $\Sigma (r = 0)$ of the proposition that $r = 0$.
The inclusion of the endpoints of the suspension are always jointly surjective,
so there is a \emph{surjection} $\{0,1\} \to \Sigma(r = 0)$. But we cannot prove
this is a bijection, or that there is a bijection from $\Sigma(r = 0)$ to
$\{0\}$ without deciding the proposition $r = 0$. We can't decide whether
a real number is $0$ (since the reals are connected), so we can't find a precise
cardinality for the fiber. This example emphasizes the difference between
cardinal finiteness (being equivalent to some $\{1, \ldots, n\}$) and
Kuratowski finiteness (admitting a surjection from some $\{1, \ldots, n\}$) in
real cohesion.
\end{rmk}

\begin{rmk}
While the map $\Rb \vee \Rb \to \Rb$ we considered in Example
\ref{sec:wedge.fibration.example} is not a covering, the homotopy quotient $\Rb \vee \Rb \to
\Rb \vee \Rb \sslash C_2$ \emph{is} a cover, and is in fact the universal cover
of $\Rb \vee \Rb \sslash C_2$. To see this, note that $\Rb \vee \Rb$ is
contractible since it is given as a crisp pushout and $\shape$ preserves crisp
pushouts. The fibers of the homotopy quotient are merely equivalent to $C_2$,
which is a discrete set, so the map is a covering. This gives an example of the
universal cover of a space which is not a set.
\end{rmk}

For a particular example of these results, consider an $n$-fold cover of the
circle $\Sb^1$.
\begin{defn}
  An $n$-fold cover $\pi : E \to B$ is a map whose fibers have $n$ elements. By Corollary \ref{cor:characterizing.covers}, an $n$-fold cover is indeed a cover.
\end{defn}

\begin{thm}\label{thm:n.fold.covers.cirlce}
Let $n : \Nb$. The type of $n$-fold covers of $\Sb^1$ whose fiber over $(1, 0)$ is identified with a fixed $n$-element set $\{1,\ldots,n\}$ is equivalent to the type $\Aut(n)$ of permutations of $n$ elements.
\end{thm}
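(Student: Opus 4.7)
The plan is to use the classification of $\esh_1$-{\'e}tale maps as discrete type families, specialized to the case of $n$-element fibers, and then leverage the computation $\esh_1 \Sb^1 \simeq \BB\Zb$ to reduce the problem to a calculation of pointed maps between classifying types.

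First, by Corollary \ref{cor:characterizing.etale.maps} applied to $\esh_1$, the type of $\esh_1$-{\'e}tale maps into $\Sb^1$ is equivalent to the type $\esh_1 \Sb^1 \to \Type_{\esh_1}$ of type families of discrete $1$-types varying over $\esh_1 \Sb^1$. Restricting to those with set fibers (so that the map is a cover) and requiring the fibers to have exactly $n$ elements cuts this down to the type of maps $\esh_1 \Sb^1 \to \BAut(\{1,\ldots,n\})$. Here $\BAut(\{1,\ldots,n\})$ is genuinely a discrete $1$-type by Theorem \ref{thm:locally.crisply.discrete.BAut.discrete}, since $\{1,\ldots,n\}$ is a crisp element of the locally crisply discrete type $\Type_{\esh_0}$ of discrete sets. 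The additional data of an identification of the fiber over $(1,0)$ with $\{1,\ldots,n\}$ is precisely a pointing of the classifying map at $\{1,\ldots,n\} : \BAut(\{1,\ldots,n\})$. So the type of $n$-fold covers in question is equivalent to the type of pointed maps
$$(\esh_1 \Sb^1,\, (1,0)^{\esh_1}) \pto (\BAut(\{1,\ldots,n\}),\, \{1,\ldots,n\}).$$

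Next, by Theorem \ref{thm:fundamental.group.of.circle}, $\esh \Sb^1$ is a pointed connected $1$-type with loop space $\Zb$, and hence $\esh_1 \Sb^1 \simeq \BB\Zb$ as pointed types. Writing $\BAut(\{1,\ldots,n\}) = \BB\Aut(n)$, the type of $n$-fold covers becomes the type of pointed maps $\BB\Zb \pto \BB\Aut(n)$.

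Finally, I would invoke the standard fact (see \cite{HigherGroups}) that looping gives an equivalence between pointed maps $\BB G \pto \BB H$ of deloopings of $\infty$-groups and homomorphisms $G \to H$; applied here, pointed maps $\BB\Zb \pto \BB\Aut(n)$ are equivalent to group homomorphisms $\Zb \to \Aut(n)$. Since $\Zb = \Omega \BB\Zb$ is the free $\infty$-group on one generator (concretely, a homomorphism out of $\Zb$ is determined by where it sends $1 \in \Zb$), this type is equivalent to $\Aut(n)$ by evaluation at the generator. Composing these equivalences yields the desired equivalence with $\Aut(n)$.

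The main obstacle is bookkeeping: carefully tracking the pointing data through each equivalence, and verifying that the fiber-over-$(1,0)$ identification really does correspond to the choice of basepoint image in $\BAut(\{1,\ldots,n\})$ under the equivalence of Corollary \ref{cor:characterizing.etale.maps}. No hard new argument is required; the proof is a sequence of applications of results already established.
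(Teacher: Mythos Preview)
Your proposal is correct and follows essentially the same strategy as the paper: reduce to pointed maps from (the shape of) $\Sb^1$ into $\BAut(n)$, use discreteness of $\BAut(n)$, and compute. The only cosmetic difference is that the paper invokes Shulman's result $(\Sb^1 \to X) \simeq (S^1 \to X)$ for discrete $X$ and then the universal property of the HIT circle $S^1$, whereas you phrase the last step as $\esh_1 \Sb^1 \simeq \BB\Zb$ together with the delooping equivalence and the free-group property of $\Zb$; these are the same computation in different packaging. One small omission: you assert $\{1,\ldots,n\}$ is crisp, but you should note (as the paper does) that this follows because $\Nb$ is crisply discrete, so $n$ may be assumed crisp.
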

\begin{proof}
First, we note that since $\Nb$ is crisply discrete, we may assume without loss of generality that $n$ is crisp and that the fixed $n$-element set $\{1,\ldots,n\}$ is also crisp. The type in question is
$$\dsum{f : \Sb^1 \to \BAut(n)}{( f(1,0) = \{1,\ldots,n\} )}$$
the type of \emph{pointed} maps from the circle to $\BAut(n)$. But Theorem
\ref{thm:locally.crisply.discrete.BAut.discrete}, $\BAut(n)$ is discrete and so this is equivalent to the type
$$\dsum{f : \shape \Sb^1 \to \BAut(n)} ( f(1,0)^{\shape} = \{1,\ldots,n\} ).$$
By Theorem 9.5 of \cite{RealCohesion}, $(\Sb^1 \to X) \simeq (S^1 \to X)$ for any discrete $X$, and so the above type is equivalent to $$\dsum{f : S^1 \to \BAut(n)} ( f(\pt) = \{1,\ldots,n\} )$$
which, by the universal proposty of $S^1$, is equivalent to $\Omega \BAut(n) \simeq \Aut(n)$.
\end{proof}

\begin{figure}[h]
    \centering
    \begin{tikzpicture}[declare function={f(\x)=0.2*sin(\x)+\x/1000;},
 rubout/.style={/utils/exec=\tikzset{rubout/.cd,#1},
 decoration={show path construction,
      curveto code={
       \draw [white,line width=\pgfkeysvalueof{/tikz/rubout/line width}+2*\pgfkeysvalueof{/tikz/rubout/halo}]
        (\tikzinputsegmentfirst) .. controls
        (\tikzinputsegmentsupporta) and (\tikzinputsegmentsupportb)  ..(\tikzinputsegmentlast);
       \draw [line width=\pgfkeysvalueof{/tikz/rubout/line width},shorten <=-0.1pt,shorten >=-0.1pt] (\tikzinputsegmentfirst) .. controls
        (\tikzinputsegmentsupporta) and (\tikzinputsegmentsupportb) ..(\tikzinputsegmentlast);
      }}},rubout/.cd,line width/.initial=2pt,halo/.initial=0.5pt]
 \draw[rubout={line width=2pt,halo=0.5pt},decorate]
   plot[variable=\x,domain=-50:970,samples=55,smooth] ({cos(\x)},{f(\x)}) to[out=0,in=195] cycle;
 \draw[rubout={line width=2pt,halo=0.5pt},decorate]
   plot[variable=\x,domain=1400:2040,samples=55,smooth] ({cos(\x)},{f(\x)}) to[out=0,in=195] cycle;
 \draw[line width=2pt] (0,-2) arc(-90:270:1cm and 0.2cm);
 \draw[thick,-stealth]  (0,-0.4) -- (0,-1.4);
\end{tikzpicture}
    \caption{A $5$-fold cover of the circle corresponding to the permutation $(12)(354)$. It has cycle type $(2,3)$, corresponding to the $2$ elements of the fiber in the top connected component, and the $3$ elements in the bottom.}
    \label{fig:covering.cycle.type}
\end{figure}

Looking at some examples of $n$-fold coverings (such as Figure \ref{fig:covering.cycle.type}), we might get the idea that the set of connected components of the total space corresponds to the cycle type of its induced permutation. Somewhat more objectively, we might expect that the set of connected components of the total space should correspond to the set of orbits of the action of the induced permutation on the elements of a fiber. We can prove this using a nice modal argument.

\begin{thm}\label{thm:fundamental.groupoid.of.total.space.of.cover}
  Let $\pi : E \to B$ be a cover over a pointed base $B$ with fiber $F$ which is connected in
  the sense that $\shape_1 B$ is $0$-connected. Then
  $$\shape_1 E = F \sslash \pi_1(B)$$
  where $\pi_1(B) :\equiv \Omega(\shape_1 B, \pt_B^{\shape_1})$ is the fundamental
  group of $B$.
\end{thm}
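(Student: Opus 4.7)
The plan is to exhibit $\esh_1 E$ as the total space of a $\pi_1(B)$-action on $F$, using that $\pi$ being a cover means $\esh_1$ transports its fiber structure faithfully from $E$ to $\esh_1 E$. Here $\esh_1 B$ is by definition a discrete $1$-type, and by hypothesis it is pointed and $0$-connected, so it is a delooping of its loop space; that is, $\esh_1 B \simeq \BB\pi_1(B)$ as pointed types.

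The next step is to identify the fibers of $\esh_1 \pi : \esh_1 E \to \esh_1 B$. Since $\pi$ is $\esh_1$-{\'e}tale, the $\esh_1$-naturality square of $\pi$ is a pullback, so the induced map $\fib_{\pi}(\pt_B) \to \fib_{\esh_1 \pi}(\pt_B^{\esh_1})$ is an equivalence. Hence $\fib_{\esh_1 \pi}(\pt_B^{\esh_1}) \simeq F$. Reading $\fib_{\esh_1 \pi} : \BB\pi_1(B) \to \Type$ as an action of $\pi_1(B)$ on types, the distinguished fiber $F^{\pt_{\BB\pi_1(B)}}$ is exactly $F$, so this is an action of $\pi_1(B)$ on $F$.

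Finally, every map is equivalent over its codomain to the first projection out of the total space of its fibers, so
\[
\esh_1 E \;\simeq\; \dsum{t : \esh_1 B} \fib_{\esh_1 \pi}(t) \;\simeq\; \dsum{t : \BB\pi_1(B)} F^t \;\equiv\; F \sslash \pi_1(B),
\]
which is the desired identification.

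The only point requiring any real attention is the second step: that the fiber of $\esh_1 \pi$ over the basepoint is honestly $F$ (rather than merely $\esh_1 F$ or something equivalent up to a shape). This is exactly what ``$\esh_1$-{\'e}tale'' buys us — it is the whole reason for working with covers as $\esh_1$-{\'e}tale maps rather than as $\esh_1$-fibrations. Once that identification is in hand, the rest of the argument is a straightforward application of the correspondence between maps into $\BB\pi_1(B)$ and $\pi_1(B)$-actions together with the definition of $F \sslash \pi_1(B)$.
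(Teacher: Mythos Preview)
Your argument is correct and follows essentially the same route as the paper: both identify $\esh_1 B$ with $\BB\pi_1(B)$, use $\esh_1$-\'etaleness of $\pi$ to identify $\fib_{\esh_1\pi}(\pt_B^{\esh_1})$ with $F$, and recognize the total space $\dsum{t:\esh_1 B}\fib_{\esh_1\pi}(t)$ as $F\sslash\pi_1(B)$. The only difference is in the last step: the paper shows that $\term{tot}(\delta):E\to\dsum{t:\esh_1 B}\fib_{\esh_1\pi}(t)$ is $\esh_1$-connected (as a pullback of the $\esh_1$-unit of $B$) into a $\esh_1$-modal type, hence a $\esh_1$-unit, whereas you simply apply the ``domain equals total space of fibers'' equivalence directly to $\esh_1\pi$ --- a slightly more streamlined way to reach the same conclusion.
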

\begin{proof}
Since $\pi : E \to B$ is a cover, $\fib_{\pi} : B \to \Type$ factors through
$\shape_1 B$ as $\fib_{\shape_1 \pi}$:
\[
\begin{tikzcd}
B \arrow[d, "(-)^{\shape_1}"'] \arrow[r, "\fib_{\pi}"] & \Type \\
\shape_1 B \arrow[ur, dashed, "\fib_{\shape_1 \pi}"'] &
\end{tikzcd}
\]
witnessed by $\delta : \fib_{\pi}(b) \xto{\sim} \fib_{\shape_1 \pi}(b^{\shape_1})$. Taking total spaces, we find that the following square is a pullback:
\[
\begin{tikzcd}
E \arrow[d, "\pi"'] \arrow[r, "\term{tot}(\delta)"] & \dsum{t : \shape_1 B} \fib_{\shape_1
  \pi}(t) \arrow[d, "\fst"] \\
B \arrow[r, "(-)^{\shape_1}"'] & \shape_1 B
\end{tikzcd}
\]
Since $(-)^{\shape_1} : B \to \shape_1 B$ is $\shape_1$-connected (by Theorem 1.32 of
\cite{RSS}) and
$\shape_1$-connected maps are preserved under pullback (by Theorem 1.34 of \cite{RSS}), the top map
$\term{tot}(\delta)$ is also $\shape_1$-connected.

 Now, since $\shape_1 B$ is $0$-connected, when pointed at $\pt_B^{\shape_1}$ it can
 be considered as the delooping $\BB \pi_1(B)$ of the fundamental group of $B$. Then, the homotopy quotient $\fib_{\pi}(\pt_B) \sslash \pi_1(B)$ can be constructed
 as the pair type
$$ F \sslash \pi_1(B) :\equiv \dsum{t : \shape_1 B}
\fib_{\shape_1 \pi}(t).$$
See Section \ref{sec:higher.groups} for a brief introduction to the theory of
higher groups and Lemma \ref{lem:identifications.in.homotopy.quotient} for a
justification of this construction.

So, the canonical map $E \to
F \sslash \pi_1(B)$ is $\shape_1$-connected and therefore in
particular a $\shape_1$-equivalence. But as a $\shape_1$-modally indexed sum
of $\shape_1$-modal types, $\fib_{\pi}(\pt_B) \sslash \pi_1(B)$ is $\shape_1$-modal,
so we find that $\shape_1 E = F
\sslash \pi_1(B)$.
\end{proof}

\begin{cor}
Let $\pi : E \to \Sb^1$  be an $n$-fold covering of the circle whose fiber over $(1,0)$ is identified with $\{1,\ldots,n\}$, and let $\varphi : \Aut(n)$ be the corresponding permutation. Then the set of connected components of the total space $E$ is equivalent to the set of orbits of the action of $\varphi$ on $\{1,\ldots,n\}$.
\end{cor}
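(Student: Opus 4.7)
The plan is to chain three earlier results: Theorem~\ref{thm:fundamental.group.of.circle} (identifying $\pi_1(\Sb^1)$ with $\Zb$), Theorem~\ref{thm:fundamental.groupoid.of.total.space.of.cover} (expressing $\esh_1 E$ as a homotopy quotient of the fiber by $\pi_1$ of the base), and Theorem~\ref{thm:n.fold.covers.cirlce} (identifying the permutation $\varphi$ as the image of the generator of $\pi_1(\Sb^1)$ under the classifying map), and then $0$-truncate to extract the set of orbits.

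First I would check the hypothesis of Theorem~\ref{thm:fundamental.groupoid.of.total.space.of.cover}: that $\esh_1 \Sb^1$ is $0$-connected. This follows because $(\cos,\sin) : \Rb \to \Sb^1$ is surjective and $\esh\Rb \simeq \ast$, so $\esh\Sb^1$ is $0$-connected, and hence so is its $1$-truncation. Applying the theorem then yields
$$\esh_1 E \;\simeq\; \{1,\ldots,n\} \sslash \Zb,$$
where the $\Zb$-action on $\{1,\ldots,n\} \equiv \fib_\pi(1,0)$ is given by transport of $\fib_{\esh_1\pi} : \esh_1\Sb^1 \to \Type$ along loops in $\esh_1\Sb^1 \simeq \BB\Zb$.

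Next I would identify this $\Zb$-action with the cyclic action generated by $\varphi$. Both Theorem~\ref{thm:n.fold.covers.cirlce} and Theorem~\ref{thm:fundamental.groupoid.of.total.space.of.cover} start from the same classifying map $\fib_{\esh_1\pi} : \esh_1\Sb^1 \to \BAut(n)$; unfolding the equivalence of Theorem~\ref{thm:n.fold.covers.cirlce} (which runs through $\esh\Sb^1 \simeq S^1$ from Theorem~9.5 of \cite{RealCohesion} and the universal property of $S^1$), the generator of $\Omega\esh_1\Sb^1 = \Zb$ is sent exactly to $\varphi \in \Omega\BAut(n) = \Aut(n)$, and this element acts on the fiber by the same transport that defines the action in Theorem~\ref{thm:fundamental.groupoid.of.total.space.of.cover}.

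Finally, I would $0$-truncate both sides of the equivalence. By Lemma~\ref{lem:identifications.in.homotopy.quotient}, $[x] = [y]$ in $\{1,\ldots,n\} \sslash \Zb$ iff some $k : \Zb$ satisfies $\varphi^k(x) = y$; after propositional truncation this is precisely the $\varphi$-orbit equivalence relation on $\{1,\ldots,n\}$, so $\pi_0 E = \trunc{\esh_1 E}_0$ is the set of orbits of $\varphi$, as required. The main technical burden is the bookkeeping in step two: verifying that the ``generator-as-classifying-permutation'' of Theorem~\ref{thm:n.fold.covers.cirlce} matches the ``generator-as-transport'' of Theorem~\ref{thm:fundamental.groupoid.of.total.space.of.cover}. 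Both constructions are naturality statements for the same classifying map, so this should reduce to unwinding definitions, but it is the one place where care is genuinely needed.
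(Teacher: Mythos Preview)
Your proposal is correct and follows essentially the same route as the paper: apply Theorem~\ref{thm:fundamental.groupoid.of.total.space.of.cover} and Theorem~\ref{thm:fundamental.group.of.circle} to obtain $\esh_1 E \simeq \{1,\ldots,n\}\sslash\Zb$, then $0$-truncate and invoke Lemma~\ref{lem:identifications.in.homotopy.quotient} to recognize the orbit set. If anything you are more careful than the paper, which simply asserts that the action is ``given by $1 \mapsto \varphi$'' without unwinding Theorem~\ref{thm:n.fold.covers.cirlce}; your explicit bookkeeping in step two fills exactly that gap.
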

\begin{proof}
  The set of connected components of the total space may be constructed as
  $\trunc{\shape_1 E}_0$, which by Theorem
  \ref{thm:fundamental.groupoid.of.total.space.of.cover} is equivalent to
  $\trunc{\fib_\pi((1,0)) \sslash \pi_1(\Sb^1)}_0$. As we calculated in Theorem
  \ref{thm:fundamental.group.of.circle}, $\pi_1(\Sb^1) = \Zb$, and by hypothesis
  $\fib_{\pi}((1, 0)) = \{1, \ldots, n\}$. So the connected components of $E$ is
  equivalent to $\trunc{\{1, \ldots, n\} \sslash \Zb}_0$ with the action given by
  $1 \mapsto \varphi$. By Lemma
  \ref{lem:identifications.in.homotopy.quotient}, two elements of $\trunc{\{1,
    \ldots, n\} \sslash \Zb}_0$ are equal if and only if there is an integer
  that sends one to the other; in other words, this is the set of orbits of the
  action of $\varphi$, as desired.
\end{proof}

We can extend the definition of a cover naturally to an ``$n$-cover'' using the
modality $\shape_n$.
\begin{defn}
  A map $\pi : E \to B$ is an $n$-cover if it is $\shape_n$-{\'e}tale and its
  fibers are $(n-1)$-types.
\end{defn}

The theory of $n$-covers follows just as smoothly as the theory of
covers. For every fact above about covers, there is an analogous fact about
$n$-covers proved in the same way. In particular, a universal $n$-cover is just a $\shape_n$-connected $n$-cover. We
can describe the universal $2$-cover of the $2$-sphere.
\begin{thm}
  Let $h : \Sb^3 \to \Sb^2$ be the Hopf fibration. Then the $\shape$-modal factor $\fst : \dsum{s
    : \Sb^2} \shape \fib_h(s) \to \Sb^2$ of the Hopf fibration is the universal $2$-cover of the $2$-sphere.
\end{thm}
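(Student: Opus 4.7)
The strategy is to verify that $\fst : E \to \Sb^2$, where $E :\equiv \dsum{s : \Sb^2} \esh \fib_h(s)$, satisfies the two defining conditions of a universal $2$-cover: that it is a $2$-cover, and that its total space $E$ is $\esh_2$-connected.

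For the $2$-cover condition, I would proceed as follows. The fibers of $\fst$ are $\esh \fib_h(s)$; by Lemma \ref{lem:fiber.of.hopf.fibration}, each $\fib_h(s)$ is merely $\Sb^1$, so each fiber of $\fst$ is merely equivalent to $\esh \Sb^1 \simeq \BB\Zb$ (using Theorem \ref{thm:fundamental.group.of.circle} together with connectedness), which is a discrete $1$-type. Because the Hopf fibration is a $\esh$-fibration (Section \ref{subsec:hopf.fibrations}), its $\esh$-modal factor $\fst$ is $\esh$-{\'e}tale by Lemma \ref{lem:fibration.iff.factor.etale}. By Lemma \ref{lem:etale.iff.locallyconstant.modal}, the family $\fib_\fst$ factors through $\esh \Sb^2$; more specifically, since its values are all merely $\BB\Zb$, it factors through $\BAut(\BB\Zb)$. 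Since $\BB\Zb$ is a crisp element of the locally discrete type $\Type_{\esh}$, Theorem \ref{thm:locally.crisply.discrete.BAut.discrete} shows $\BAut(\BB\Zb)$ is discrete, and being a $2$-type it is $\esh_2$-modal. Therefore $\fib_\fst$ in fact factors through $\esh_2 \Sb^2$, and applying Lemma \ref{lem:etale.iff.locallyconstant.modal} to the modality $\esh_2$ yields that $\fst$ is $\esh_2$-{\'e}tale.

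For the $\esh_2$-connectedness of $E$, observe that the $\esh$-connected factor $\Sb^3 \to E$ of $h$ is $\esh$-connected, so $\esh E \simeq \esh \Sb^3$. Since $E$ is crisp (being constructed from crisp data), the proposition identifying $\esh_n$ with $\trunc{\esh(-)}_n$ on crisp types gives $\esh_2 E \simeq \trunc{\esh \Sb^3}_2$. It therefore suffices to show that $\esh \Sb^3$ is $2$-connected. The Hopf fibration being a $\esh$-fibration yields the fiber sequence $\esh \Sb^1 \to \esh \Sb^3 \to \esh \Sb^2$ in discrete types; the resulting long exact sequence of homotopy groups, combined with $\esh \Sb^1 \simeq \BB\Zb$ (so $\pi_1 = \Zb$ and $\pi_k = 0$ for $k \neq 1$) and the identification $\esh_2 \Sb^2 \simeq \BB^2 \Zb$ (so $\pi_1 = 0$ and $\pi_2 = \Zb$) from the Chern class calculation, forces $\pi_0 \esh \Sb^3 = \pi_1 \esh \Sb^3 = \pi_2 \esh \Sb^3 = 0$.

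The main obstacle is the long exact sequence calculation --- in particular, ensuring the vanishing of $\pi_1, \pi_2$ of $\esh \Sb^3$ is not circular with the earlier computation of $\pi_2 \esh \Sb^2$ which used the same fibration. The cleanest resolution is to invoke that the shape of the topological $3$-sphere agrees with the HIT $3$-sphere in Real Cohesion and is therefore classically $2$-connected; by exactness this independently forces the boundary map $\pi_2 \esh \Sb^2 \to \pi_1 \esh \Sb^1$ to be an isomorphism and directly yields $\trunc{\esh \Sb^3}_2 \simeq \ast$.
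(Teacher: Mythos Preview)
Your proposal is correct and lands on essentially the same argument as the paper, though with more scaffolding than necessary.

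For the $2$-cover condition, the paper simply invokes Theorem~\ref{thm:characterizing.shape.fibration} directly: since $\fib_\pi(s) = \esh \fib_h(s)$ is merely equivalent to the crisp discrete $1$-type $\esh\Sb^1$, the ``furthermore'' clause of that theorem (with $F$ an $n$-type) immediately gives $\esh_2$-\'etaleness. Your route --- first establishing $\esh$-\'etaleness via Lemma~\ref{lem:fibration.iff.factor.etale}, then factoring through $\BAut(\BB\Zb)$ and upgrading to $\esh_2$ --- is precisely the proof of Theorem~\ref{thm:characterizing.shape.fibration} unpacked by hand, so there is no real difference in content.

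For $\esh_2$-connectedness of $E$, the paper bypasses the long exact sequence entirely and writes one line: $\esh E \simeq \esh \Sb^3 = S^3$, which is $2$-connected. Your detour through the long exact sequence is unnecessary, and you correctly diagnose the circularity risk (the paper's computation of $\pi_2(\Sb^2)$ already silently uses $\pi_1\esh\Sb^3 = \pi_2\esh\Sb^3 = 0$). Your fallback --- invoking $\esh\Sb^3 \simeq S^3$ from Real Cohesion --- is exactly the paper's move, so you should lead with it and drop the exact-sequence attempt altogether.
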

\begin{proof}
  Let $\pi : E \to \Sb^2$ denote the $\shape$-modal factor of the Hopf fibration.
  Note that $\fib_{\pi}(s) = \shape \fib_h(s)$ is merely equivalent to the crisply
  discrete 1-type $\shape \Sb^1$ for all $s :
  \Sb^2$, and is therefore by Theorem \ref{thm:characterizing.shape.fibration}
  is $\shape_2$-{\'e}tale and so a $2$-cover. Furthermore, $\shape E \simeq \shape
  \Sb^3$, so it is $\shape_2$-connected (since $\shape \Sb^3 = S^3$ is $2$-connected), and therefore the universal $2$-cover.
\end{proof}

The theory of $n$-covers seems related to the theory of Whitehead towers, but the precise
relationship between these notions in Cohesive HoTT is not yet clear to the author.

We can show that the universal cover of a crisp $\infty$-group is also an
$\infty$-group. If $G$ is a crisp $\infty$-group, then so is $\shape_1 G \simeq
\trunc{\shape G}_1$ by Theorem \ref{thm:shape.of.infty.group.is.infty.group} and
so we get a long fiber sequence:
\begin{center}
    \begin{tikzcd}
  & \cdots \arrow[r] & \pi_1(G) \arrow[lld, out=-30, in=150] \\
  \tilde{G} \arrow[r] & G \arrow[r]    & \shape_1 G  \arrow[lld, out=-30, in=150] \\
  \BB \tilde{G} \arrow[r] & \BB G \arrow[r] & \shape_2 \BB G
\end{tikzcd}
\end{center}
The delooping of $\tilde{G}$ is defined to be the fiber of $(-)^{\shape_2} : \BB
G \to \shape_2 \BB G$, and it is $0$-connected since the unit $(-)^{\shape_1} : G
\to \shape_1 G$ is surjective. Note that $\BB \tilde{G}$ is the universal
$2$-cover of $\BB G$.

We can continue this fiber sequence on as long as
$G$ can be delooped, taking $\shape_{k+1} \BB^k G$ as the delooping of $\shape_k
\BB^{k-1} G$ and taking $\BB^k \tilde{G}$ to be the universal $(k+1)$-cover of
$\BB^k G$. In particular, we get a long fiber sequence:
\begin{center}
    \begin{tikzcd}
  & \cdots \arrow[r] & \Zb \arrow[lld, out=-30, in=150] \\
  \Rb \arrow[r] & \type{U}(1) \arrow[r]    & \BB \Zb  \arrow[lld, out=-30, in=150] \\
  \BB \Rb \arrow[r] & \BB \type{U}(1) \arrow[r] & \BB^2 \Zb \arrow[lld, out=-30,
  in=150]\\
  \cdots & &
\end{tikzcd}
\end{center}
This gives us a long exact sequence $H^{\ast}(-;\,\Zb) \to H^{\ast}(-;\,\Rb) \to
H^{\ast}(-;\,\type{U}(1)) \to H^{\ast + 1}(-;\,\Zb) $ in \emph{continuous} cohomology.

In this paper, we have defined a notion of \emph{modal fibration} and explored
the fibrations for the \emph{shape} modality of Real Cohesive HoTT. We have seen
that it is often quite easy to prove a map is a $\shape$-fibration --- indeed, if
you know what the fiber is ahead of time, it is often trivial. After a fibration
is found, many simple calculations can be done with purely modal arguments.

Though we only briefly discussed them in this paper, the author hopes that
this framework can make calculations in the theory of orbifolds and Lie groupoids
more approachable and more conceptual.

\printbibliography

\end{document}